\newtheorem{definition}{Definition}[section]
\newtheorem{theorem}{Theorem}[section]
\newtheorem{proposition}{Proposition}[section]
\newtheorem{corollary}{Corollary}[section]
\newtheorem{remark}{Remark}[section]
\DeclareMathOperator*{\argmax}{arg\,max}
\DeclareMathOperator*{\argmin}{arg\,min}
\newcommand{\norm}[1]{\left\lVert#1\right\rVert}
\title{Improving the convergence of Markov chains via permutations and projections}
\author[1]{Michael C.H. Choi\thanks{Email: mchchoi@nus.edu.sg, corresponding author}}
\author[2]{Max Hird\thanks{Email: max.hird.19@ucl.ac.uk}}
\author[3]{Youjia Wang\thanks{Email: e1124868@u.nus.edu}}
\affil[1]{Department of Statistics and Data Science and Yale-NUS College, National University of Singapore, Singapore}
\affil[2]{Department of Statistical Science, University College London, London}
\affil[3]{Department of Statistics and Data Science, National University of Singapore, Singapore}
\date{\today}
\begin{document}

\maketitle

\begin{abstract}

	This paper aims at improving the convergence to equilibrium of finite ergodic Markov chains via permutations and projections. First, we prove that a specific mixture of permuted Markov chains arises naturally as a projection under the KL divergence or the squared-Frobenius norm. We then compare various mixing properties of the mixture with other competing Markov chain samplers and demonstrate that it enjoys improved convergence. This geometric perspective motivates us to propose samplers based on alternating projections to combine different permutations and to analyze their rate of convergence. We give necessary, and under some additional assumptions also sufficient, conditions for the projection to achieve stationarity in the limit in terms of the trace of the transition matrix.  We proceed to discuss tuning strategies of the projection samplers when these permutations are viewed as parameters. Along the way, we reveal connections between the mixture and a Markov chain Sylvester's equation as well as assignment problems, and highlight how these can be used to understand and improve Markov chain mixing. We provide two examples as illustrations. In the first example, the projection sampler (with a suitable choice of the permutation) improves upon Metropolis-Hastings in a discrete bimodal distribution with a reduced relaxation time from exponential to polynomial in the system size, while in the second example, the mixture of permuted Markov chain yields a mixing time that is logarithmic in system size (with high probability under random permutation), compared to a linear mixing time in the Diaconis-Holmes-Neal sampler. Finally, we provide numerical experiments on \textcolor{black}{simple} statistical physics models to illustrate the improved mixing performance of the proposed projection samplers over standard Metropolis-Hastings. \newline	
    \textbf{Keywords}: Markov chains, Kullback-Leibler divergence, Markov chain Monte Carlo, Metropolis-Hastings, alternating projections, isometric involution, permutation, Sylvester's equation, assignment problems, Dobrushin coefficient \newline
    \textbf{AMS 2020 subject classification}: 60J10, 60J22, 65C40, 94A15, 94A17
\end{abstract}

    
\section{Introduction}

Given an ergodic discrete-time Markov chain with transition matrix $P$ and stationary distribution $\pi$, in this paper we focus on improving the convergence of the Markov chain towards $\pi$ via permutations and projections. In the literature, a wide variety of tools and methods have been developed to improve mixing of finite Markov chains. This includes techniques such as lifting \cite{AST21}, non-reversible Markov chain Monte Carlo (MCMC) \cite{DHM00,RBS16}, to name but a few. More recently, there is growing interests in using permutations as a promising technique to accelerate Markov chains, see for example \cite{CD20,BHP23,D24,D24b}.

This manuscript proposes samplers based on projections to improve mixing over the original $P$. These projection samplers depend on some underlying permutation matrices that can be understood as tuning parameters of the algorithms. We summarize several key contributions of the paper as follows.

At the highest level the contributions made in the paper fall into three strands. In the first we identify the projection of a transition matrix $P$ onto the set of $(\pi, Q)$-self-adjoint transition matrices under the Kullback-Leibler divergence, where $Q$ is an isometric involution. The notion of $(\pi, Q)$-self-adjointedness is defined in \cite{A21} where it is shown to hold of various state-of-the-art samplers. We analyze the properties of such a projection, and show that it compares favourably to related transition matrices (such as the original $P$) in relevant quantities such as the spectral gap and asymptotic variance. Having defined the action of a single projection, in the second strand we analyze cycling sequences of these projections. We prove that a limit exists for such sequences, and provide a rate of convergence to it. In the third strand we examine the performance of the projections of existing samplers on a range of target distributions. Here we give both theoretical results (such as improvements to the relaxation time of a Metropolis-Hastings algorithm from exponential to polynomial) and empirical evidence in support of the \textcolor{black}{improved performance of the projections when tested on the one-dimensional Ising model, the Edwards-Anderson spin glass, and the one-dimensional Blume-Capel model}.

To state our contributions in greater detail, and in the order they appear in the text: in Section \ref{sec:basicdef} we use a Pythagorean identity to identify the projection of the transition matrix $P$ onto the set of $(\pi, Q)$-self-adjoint transition matrices, where $Q$ is an isometric involution. The projection takes the following form:
\begin{align}\label{eq:intro}
	\dfrac{1}{2}(P + QP^*Q).
\end{align}
In Section \ref{sec:compare} we show that the projection compares favourably in terms of its Dobrushin coefficient, spectral gap, asymptotic variance, and average hitting time with the following related matrices: $P$, $QP$, $PQ$, and $\alpha P + (1-\alpha)QPQ$ for $\alpha \in [0,1]$.

In Section \ref{sec:altproj} we use a sequence $Q_0,\ldots,Q_{m-1}$ for $m\in\mathbb{N}$ of isometric involutions to generate a sequence of projections. Defining $R_n$ as the result of projecting $P$ onto the $(\pi, Q_0)$-self-adjoint matrices, then the $(\pi,Q_1)$-self-adjoint matrices etc. until we finally we project onto the $(\pi, Q_{n\textup{ mod }m})$-self-adjoint matrices. We prove that a limit $R_\infty$ exists, and we give the rate of convergence to it. In Section \ref{sec:maxspeedlimit} we give necessary and, under additional assumptions, sufficient conditions for when $R_n=\Pi$ for a finite $n\in\mathbb{N}$, where $\Pi$ is just a matrix whose rows are equal to $\pi$. These assumptions involve the trace and the spectrum of $P$.

\textcolor{black}{We proceed to investigate the practical performance of samplers based on \eqref{eq:intro}.} In Section \ref{sec:tuneQ} we discuss tuning strategies of $Q$ to optimize the projection. Interestingly, one strategy lies in finding the $Q$ which solves a Markov chain assignment problem. We discuss adapting $Q$ over the course of a Markov chain, with a technique related to the equi-energy sampler \cite{KZW06}. In Section \ref{sec:MH} we apply the theory we develop to the case that $P$ is a Metropolis-Hastings (MH) sampler, providing a case study in which projecting reduces the energy barriers in the landscape defined by the target, resulting in a decrease in relaxation time from exponential to polynomial in the system size. In Section \ref{sec:general} we assume that the target distribution is uniform, such that $Q$ may be any permutation. We show that when $Q$ is drawn uniformly at random from the set of permutations, the total variation mixing time of the mixture of the permuted chain is, with high probability, at most logarithmic in the system size, while the competing sampler of Diaconis-Holmes-Neal has a linear mixing time. Finally Section \ref{sec:numerical} \textcolor{black}{serves as a proof-of-concept in which we numerically verify the improvements in mixing of projection samplers. Specifically, we} compare variants of the projected kernel in which we variously fix and adapt the projection against a standard MH sampler on the \textcolor{black}{one-dimensional} Ising model, the Edwards-Anderson spin glass, and the \textcolor{black}{one-dimensional} Blume-Capel model.

Some of the subsequent results apply standard techniques in linear algebra, Markov chain theory and information theory. The proofs of these are collected in Section \ref{sec:proof}.

\textcolor{black}{The Appendix in Section \ref{sec:appendix} gives a detailed experimental protocol, as well as a table of commonly used notations.}

\subsection{Related Work}

Perturbing an existing Markov chain kernel is a common way to attempt to improve its properties. \cite{DHM00} analyse the convergence in $\chi^2$ and total variation of a particular `lifted' Markov chain in which the state space is effectively expanded to include a velocity component. Its resulting stationary distribution is uniform $\mathbb{Z}/2n\mathbb{Z}$ for some $n\in\mathbb{N}$. It converges faster than a simple random walk because the velocity component persists through time, reducing the amount of diffusivity. \cite{RBS16} examine chains constructed using reversible and non-reversible perturbations made at the level of the generator of a Markov process, showing that they dominate the unperturbed chain in terms of the spectral gap, asymptotic variance, and large deviation functionals. In the course of doing so they offer concrete instantiations of the perturbed chains they describe in abstract. In general \cite{AST21} captures various perturbed chains (such as non-backtracking chains \cite{kempton2016} and annealed chains \cite{kirkpatrick1983}) under the framework of `lifted' Markov chains, in which one invents dynamics on an extended state space in the hopes of improving convergence properties of the original (now marginal) chain. They show that the mixing times of these `lifted' chains depend sensitively on the adoption of certain design constraints, such as the ability to choose an initial distribution, and whether marginal $\pi$-invariance is preserved upon the lifting operation.

The particular perturbation we make is to project the Markov kernel onto the space of $(\pi, Q)$-self adjoint kernels, where $Q$ is an equi-probability permutation matrix. The resulting projection is a mixture \eqref{eq:intro}. \cite{BHP23} also examine the effects of using permutations on convergences: specifically they identify a cutoff for chains of the form $PQ$ where $P$ is doubly stochastic and $Q$ is a uniformly sampled permutation (hence the stationary distribution is uniform). When $Q$ is predetermined, they improve on a mixing time result of \cite{CD20}. Perturbing with mixtures and permutations is examined in 
\cite{D24, D24b} which demonstrates a cutoff for a Markov kernel of the form
\[
\tilde{P}(x,y)=p_1(x,\sigma(x))P_1(x,y) + p_2(x,\sigma(x))P_2(\sigma(x),\sigma(y))
\]
for all $x,y \in \mathcal{X}$ where $p_1(x,y),p_2(x,y) \in [0,1]$, $\sigma$ is a fixed permutation, and $P_1$ and $P_2$ are transition matrices.

The space we project onto is that of the $(\pi, Q)$-reversible kernels (which we define in Section \ref{sec:basicdef}), whose members are a type of non-reversible Markov kernel. Various characterizations of non-reversibility exist, such as Yaglom reversibility \cite{yaglom1949}, skew detailed balance \cite{turitsyn2011}, and modified detailed balance \cite{fang2014}. Markov kernels that exhibit these types of non-reversibility fall under the following inclusions:
\[
\textup{Yaglom reversibility}=\textup{skew detailed balance} \subseteq(\pi,Q)\textup{-reversibility}\subseteq \textup{modified detailed balance}
\]

In the $(\pi,Q)$-reversible case, \cite{A21} assert general comparison results between asymptotic variances of $(\pi,Q)$-reversible samplers. The following are a selection of $(\pi,Q)$-reversible samplers: Guided Random Walk Metropolis \cite{gustafson1998}, Extra chance Hamiltonian Monte Carlo \cite{campos2015}, and all `lifted' algorithms that satisfy skew detailed balance, such as those in \cite{turitsyn2011} and \cite{sakai2016}. In general a perturbation into $(\pi, Q)$-reversibility is made in order to eliminate the diffusion-like dynamics of existing Markov kernels.

\section{Two types of deformed information divergences and the induced information projections onto the set of $(\pi,Q)$-self-adjoint transition matrices} \label{sec:basicdef}

Let $\mathcal{X}$ be a finite state space and we denote by $\mathcal{L} = \mathcal{L}(\mathcal{X})$ to be the set of transition matrices on $\mathcal{X}$. Analogously we write $\mathcal{P}(\mathcal{X})$ to be the set of probability masses with full support on $\mathcal{X}$, that is, $\min_x \pi(x) > 0$ for $\pi \in \mathcal{P}(\mathcal{X})$. For $m,n \in \mathbb{Z}$ with $m \leq n$, we write $\llbracket m,n \rrbracket := \{m,m+1,\ldots,n-1,n\}$. In particular, when $m = 1$ we write $\llbracket n \rrbracket := \llbracket 1,n \rrbracket$.

Let $\ell^2(\pi)$ be the Hilbert space weighted by $\pi$ endowed with the inner product, for $f,g : \mathcal{X} \to \mathbb{R}$,
\begin{align*}
	\langle f,g \rangle_\pi := \sum_{x \in \mathcal{X}} f(x)g(x) \pi(x).
\end{align*}
The $\ell^2(\pi)$-norm of $f$ is defined to be $\norm{f}_{\pi}^2 = \langle f,f \rangle_\pi$. We also define $\ell^2_0(\pi) := \{f \in \ell^2(\pi);~ \pi(f) = 0\}$.

Given a probability mass $\pi \in \mathcal{P}(\mathcal{X})$, we write $\mathcal{S}(\pi) \subseteq \mathcal{L}$ to be the set of $\pi$-stationary transition matrices, that is, $P \in \mathcal{S}(\pi)$ satisfies $\pi P = \pi$. We also denote by $\mathcal{L}(\pi) \subseteq \mathcal{L}$ to be the set of $\pi$-reversible transition matrices, that is, $P \in \mathcal{L}(\pi)$ satisfies the detailed balance condition with $\pi(x) P(x,y) = \pi(y) P(y,x)$ for all $x,y \in \mathcal{X}$. For $P \in \mathcal{S}(\pi)$, we write $P^* \in \mathcal{S}(\pi)$ to be the time-reversal or the $\ell^2(\pi)$-adjoint of $P$. Thus, $P \in \mathcal{L}(\pi)$ if and only if $P = P^*$.

Let $Q: \ell^2(\pi) \to \ell^2(\pi)$ be an isometric involution on $\mathcal{X}$ with respect to $\pi$ as in \cite{A21}, that is, $Q$ satisfies $Q^2 = I$ and $Q^* = Q$. We write $\mathcal{I}(\pi) = \mathcal{I}(\pi, \mathcal{X})$ to be the set of isometric involution matrices on $\mathcal{X}$ with respect to $\pi$. $L \in \mathcal{S}(\pi)$ is said to be $(\pi,Q)$-self-adjoint if and only if $L^* = QLQ$. This is also equivalent to say that $QL$ is $\ell^2(\pi)$-self-adjoint, and when $Q$ is also a Markov kernel, $QL \in \mathcal{L}(\pi)$. We write $\mathcal{L}(\pi,Q) \subseteq \mathcal{L}$ to be the set of $(\pi,Q)$-self-adjoint transition matrices. In the special case of $Q = I$, we recover that $\mathcal{L}(\pi,I) = \mathcal{L}(\pi)$.

We now characterize $\mathcal{I}(\pi) \cap \mathcal{L}$ in the finite state space setting. Let $\mathbf{P}$ be the set of permutations on $\mathcal{X}$. Let $\psi \in \mathbf{P}$ be a permutation, and $Q_\psi$ be the induced permutation matrix with entries $Q_\psi(x,y) := \delta_{y = \psi(x)}$ for all $x,y \in \mathcal{X}$, where $\delta$ is the Dirac mass function. Define a set of permutations with respect to $\pi$ to be
\begin{align*}
	\Psi(\pi) := \{\psi \in \mathbf{P};~ \forall x \in \mathcal{X},~ \psi(\psi(x)) = x, ~ \pi(x) = \pi(\psi(x))\}.
\end{align*}

\begin{proposition}\label{prop:characterizIpi}
	$$\mathcal{I}(\pi)\cap \mathcal{L} = \{Q_\psi;~ \psi \in \Psi(\pi)\}.$$
\end{proposition}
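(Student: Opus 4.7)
The containment $\supseteq$ is straightforward: for $\psi \in \Psi(\pi)$, the matrix $Q_\psi$ is a permutation matrix (hence stochastic), $Q_\psi^2 = Q_{\psi \circ \psi} = I$ because $\psi$ is an involution, and the detailed balance relation $\pi(x) Q_\psi(x,y) = \pi(y) Q_\psi(y,x)$ reduces to $\pi(x) = \pi(\psi(x))$, which holds by assumption. Thus $Q_\psi \in \mathcal{I}(\pi) \cap \mathcal{L}$. I would dispatch this direction in a couple of lines.

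For the nontrivial direction $\subseteq$, take $Q \in \mathcal{I}(\pi) \cap \mathcal{L}$. The essential step is to show that any stochastic $Q$ satisfying $Q^2 = I$ must be a permutation matrix. The plan is to invoke the classical fact that if both $A$ and $A^{-1}$ have non-negative entries then $A$ is a monomial (generalized permutation) matrix. Since $Q^2 = I$ gives $Q^{-1} = Q$, both $Q$ and its inverse are entrywise non-negative. I would give a short self-contained argument for the classical fact: from $\sum_{k} Q(i,k) Q(k,j) = \delta_{ij}$ and non-negativity, for $j \neq i$ every product $Q(i,k) Q(k,j)$ vanishes. Fixing for each $i$ some $k_i$ with $Q(i, k_i) > 0$ (which exists because $Q$ is stochastic), we deduce $Q(k_i, j) = 0$ for all $j \neq i$, so row $k_i$ is supported only on column $i$. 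Using that $Q$ is invertible (so has no zero row) and combining with the analogous analysis of $Q Q = I$, I would conclude that each row and each column of $Q$ has exactly one nonzero entry. Stochasticity then forces these nonzero entries to equal $1$, so $Q = Q_\psi$ for some permutation $\psi \in \mathbf{P}$.

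It remains to verify $\psi \in \Psi(\pi)$. The condition $Q^2 = I$ translates to $Q_{\psi \circ \psi} = I$, i.e.\ $\psi(\psi(x)) = x$ for every $x$. Self-adjointness $\pi(x) Q_\psi(x,y) = \pi(y) Q_\psi(y,x)$, evaluated at $y = \psi(x)$ (and using $\psi(\psi(x)) = x$), becomes $\pi(x) = \pi(\psi(x))$ for all $x$. Hence $\psi \in \Psi(\pi)$, completing the proof.

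The main obstacle is the middle step establishing that a non-negative matrix with a non-negative inverse is monomial. Everything else is bookkeeping once this is in hand. I would write the proof of this fact carefully (roughly one short paragraph) rather than merely citing it, since the paper's exposition is otherwise self-contained.
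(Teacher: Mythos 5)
Your proof is correct, and the overall architecture (easy containment $\supseteq$ by direct verification; for $\subseteq$, first show $Q$ is a permutation matrix, then read off the involution and equi-probability properties) matches the paper's. The one genuine difference is in the key step: the paper simply cites Remark 4(a) of Miclo to conclude that a stochastic matrix in $\mathcal{I}(\pi)$ must be $Q_\sigma$ for an involutive permutation $\sigma$, whereas you prove this from scratch via the observation that a non-negative matrix whose inverse is also non-negative is monomial. Your self-contained argument is sound: from $\sum_k Q(i,k)Q(k,j)=\delta_{ij}$ and non-negativity, each row $k_i$ witnessing $Q(i,k_i)>0$ is supported on column $i$ alone, stochasticity forces $Q(k_i,i)=1$, and injectivity of $i\mapsto k_i$ (two distinct $i$'s would force a zero row) makes it a bijection, so every row of $Q$ is a standard basis vector. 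Note that once you have this, the ``analogous analysis of the columns'' you mention is not actually needed. The final verification that $\psi(\psi(x))=x$ and $\pi(x)=\pi(\psi(x))$ is identical to the paper's. What your route buys is self-containedness at the cost of a paragraph; what the paper's citation buys is brevity. Either is acceptable.
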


\begin{proof}
	We first prove that $\{Q_\psi;~ \psi \in \Psi(\pi)\} \subseteq \mathcal{I}(\pi) \cap \mathcal{L}$. We check that $Q^2_\psi(x,y) = \delta_{y = x}$ and hence $Q^2_\psi = I$. The detailed balance condition is also satisfied since $\pi(x) Q_\psi(x,y) = \pi(x) \delta_{y = \psi(x)} = \pi(\psi(x)) \delta_{x = \psi(y)} = \pi(y) Q_\psi(y,x)$. This shows $Q_\psi \in \mathcal{I}(\pi) \cap \mathcal{L}$.
	
	Next, we prove the opposite direction. Precisely, if $Q \in \mathcal{I}(\pi) \cap \mathcal{L} $, then by \cite[Remark 4(a)]{Miclo18} $Q = Q_\sigma$ where $\sigma$ is a permutation such that $\sigma^{-1} = \sigma$. Since $Q_\sigma$ is $\pi$-reversible, we check that $\pi(x) = \pi(x) Q_\sigma(x, \sigma(x)) = \pi(\sigma(x)) Q_\sigma(\sigma(x),x) = \pi(\sigma(x))$. This verifies that $\sigma \in \Psi(\pi)$, which completes the proof.
\end{proof}

Note that since the identity mapping $\psi(x) = x$ belongs to $\Psi(\pi)$ for all $\pi \in \mathcal{P}(\mathcal{X})$, $I = Q_\psi \in \mathcal{I}(\pi) \cap \mathcal{L}$, and hence the set $\mathcal{I}(\pi) \cap \mathcal{L}$ is non-empty. We also note that $\pm(2\Pi - I) \in \mathcal{I}(\pi)$ but these are not transition matrices, where $\Pi$ is the matrix with each row equals to $\pi$.

Another remark is that, for $\psi \in \Psi(\pi)$, this is an ``equi-probability" permutation with respect to $\pi$ since we require $\pi(x) = \pi(\psi(x))$ for all $x$. This connection with the equi-energy sampler \cite{KZW06} is further highlighted in Section \ref{sec:tuneQ}.

As another important point to note, $QP$ or $PQ$ have been proposed and analyzed in the literature as promising samplers over the original $P$, see for example \cite{BHP23,CD20} and the references therein. In the special case of $P \in \mathcal{L}(\pi)$ and $Q \in \mathcal{I}(\pi) \cap \mathcal{L}$, we see that
\begin{align*}
	(QP)^* = Q(QP)Q, \quad (PQ)^* = Q(PQ)Q,
\end{align*}
and hence both $QP, PQ \in \mathcal{L}(\pi,Q)$. That is, they are $(\pi,Q)$-self-adjoint transition matrices, even if they are non-reversible with respect to $\pi$.

We now introduce two types of deformed Kullback-Leibler (KL) divergences that depend on $Q$.

\begin{definition}[$Q$-left-deformed and $Q$-right-deformed KL divergences]
	Let $\pi \in \mathcal{P}(\mathcal{X})$. Let $Q \in \mathcal{I}(\pi) \cap \mathcal{L}$ be an isometric involution transition matrix, $P,L \in \mathcal{L}$. The $Q$-left-deformed KL divergence from $L$ to $P$ with respect to $\pi$ is defined to be
	\begin{align*}
		{}^Q D^{}_{KL}(P \| L) := D^{\pi}_{KL}(QP \| QL) := \sum_x \pi(x) \sum_y QP(x,y) \ln \left(\dfrac{QP(x,y)}{QL(x,y)}\right),
	\end{align*}
	where the usual conventions of $0 \ln (0/0) := 0$ and $0 \cdot \infty := 0$ applies. Note that the dependency on $\pi$ of ${}^QD_{KL}$ is suppressed.
	
	Similarly, we define the $Q$-right-deformed KL divergence from $L$ to $P$ with respect to $\pi$ to be
	\begin{align*}
		D^{Q}_{KL}(P \| L) := D^{\pi}_{KL}(PQ \| LQ).
	\end{align*}
\end{definition}

Note that in the special case of $Q = I$, ${}^I D^{}_{KL} = D^{I}_{KL}$ is the classical KL divergence rate from $L$ to $P$ when $P \in \mathcal{S}(\pi)$.

In the next proposition, we summarize a few properties of $D_{KL}^Q$ and ${}^Q D_{KL}$:

\begin{proposition}\label{prop:deformKLprop}
	Let $\pi \in \mathcal{P}(\mathcal{X})$ and $Q \in \mathcal{I}(\pi) \cap \mathcal{L}$ be an isometric involution transition matrix. For $P,L \in \mathcal{L}$, we have the following:
	\begin{enumerate}
		\item(Non-negativity) 
		\begin{align*}
			{}^Q D_{KL}(P \| L) \geq 0.
		\end{align*}
		Equality holds if and only if $QP = QL$ if and only if $P = L$. Similarly, 
		\begin{align*}
			D_{KL}^Q(P \| L) \geq 0.
		\end{align*}
		Equality holds if and only if $PQ = LQ$ if and only if $P = L$.
		
		\item(Duality) Let $P,L \in \mathcal{S}(\pi)$.
		\begin{align*}
			{}^Q D_{KL}(P \| L) = D_{KL}^Q(P^* \| L^*).
		\end{align*}
	\end{enumerate}
\end{proposition}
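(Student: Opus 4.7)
The plan is to reduce both claims to classical properties of the weighted KL divergence rate $D^\pi_{KL}(M \| N) := \sum_x \pi(x) \sum_y M(x,y) \ln(M(x,y)/N(x,y))$ by unpacking the definitions ${}^Q D_{KL}(P \| L) = D^\pi_{KL}(QP \| QL)$ and $D_{KL}^Q(P \| L) = D^\pi_{KL}(PQ \| LQ)$. Since $Q, P, L$ are all transition matrices, so are $QP$, $QL$, $PQ$, $LQ$, so the rows appearing inside the divergences are honest probability distributions on $\mathcal{X}$.

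For part 1 (non-negativity), I would fix $x \in \mathcal{X}$ and apply Gibbs' inequality row-wise to obtain $\sum_y QP(x,y) \ln(QP(x,y)/QL(x,y)) \ge 0$, with equality if and only if $QP(x,\cdot) = QL(x,\cdot)$. Weighting by $\pi(x) > 0$ (using the full-support assumption on $\pi \in \mathcal{P}(\mathcal{X})$) and summing preserves both the inequality and the equality condition, giving ${}^Q D_{KL}(P\|L) \ge 0$ with equality iff $QP = QL$. The further equivalence $QP = QL \Leftrightarrow P = L$ follows by left-multiplying by $Q$ and invoking $Q^2 = I$. The statement for $D_{KL}^Q$ is identical after replacing left-multiplication by right-multiplication.

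For part 2 (duality), the key ingredient I would first establish is the ``adjoint invariance'' identity
\begin{align*}
D^\pi_{KL}(M \| N) = D^\pi_{KL}(M^* \| N^*) \quad \text{for all } M, N \in \mathcal{S}(\pi).
\end{align*}
This follows by writing $\ln(M(x,y)/N(x,y)) = \ln((\pi(x) M(x,y))/(\pi(x) N(x,y))) = \ln(M^*(y,x)/N^*(y,x))$ via $\pi(x) M(x,y) = \pi(y) M^*(y,x)$ (and similarly for $N$), then exchanging $\pi(x)M(x,y)$ for $\pi(y) M^*(y,x)$ in the outer sum and relabeling $x \leftrightarrow y$. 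To apply this with $M = QP$ and $N = QL$, I would note that $Q \in \mathcal{I}(\pi)\cap\mathcal{L}$ is in particular $\pi$-reversible and hence $\pi$-stationary, so $\pi QP = \pi P = \pi$ and similarly $\pi QL = \pi$; the identity therefore applies. Combining this with $(QP)^* = P^* Q^* = P^* Q$ (using $Q^* = Q$), and likewise for $QL$, yields the chain
\begin{align*}
{}^Q D_{KL}(P\|L) = D^\pi_{KL}(QP \| QL) = D^\pi_{KL}(P^* Q \| L^* Q) = D_{KL}^Q(P^* \| L^*),
\end{align*}
as desired.

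There is no deep obstacle: both parts are definition-chasing. The trickiest step is establishing the adjoint-invariance identity of the divergence rate on $\mathcal{S}(\pi)$, which is where one must carefully combine $\pi$-stationarity of $QP, QL$ (inherited from $\pi$-reversibility of $Q$ and $\pi$-stationarity of $P, L$) with the identity $(QP)^* = P^*Q$ furnished by the self-adjointness $Q^* = Q$ of the involution. Everything else is bookkeeping, modulo invoking the full-support assumption on $\pi$ and the involution property $Q^2 = I$ to lift the equality conditions from row-wise equality of $QP$ and $QL$ up to $P = L$.
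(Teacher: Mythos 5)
Your proof is correct and takes essentially the same route as the paper: non-negativity is reduced to the row-wise Gibbs inequality for $D^\pi_{KL}$ (which the paper simply cites), and duality is reduced to the adjoint-invariance identity $D^\pi_{KL}(M\|N)=D^\pi_{KL}(M^*\|N^*)$ on $\mathcal{S}(\pi)$ together with $(QP)^*=P^*Q$, which is exactly the ``bisection property'' the paper cites. The only difference is that you prove these two cited ingredients from scratch rather than invoking them, and your verifications (including the stationarity of $QP,QL$ needed for the adjoints to be transition matrices) are sound.
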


For $P \in \mathcal{S}(\pi)$ and $Q \in \mathcal{I}(\pi) \cap \mathcal{L}$ being an isometric involution transition matrix, we define
\begin{align}\label{def:overlineP}
	\overline{P} = \overline{P}(Q) := \dfrac{1}{2}(P + QP^*Q).
\end{align}
It can readily be seen that $Q \overline{P} Q = \overline{P}^*$, and hence $\overline{P} \in \mathcal{L}(\pi,Q)$. In the special case of $Q = I$, we recover that $\overline{P}(I)$ is the additive reversiblization of $P$. We also note that $\overline{P}(Q)$ can be interpreted as a specific mixture of permuted Markov chains in the sense of \cite{D24b}.

The next result presents a Pythagorean identity, which can be interpreted as the property that $\overline{P}$ is the closest $(\pi,Q)$-self-adjoint transition matrix to a given $P$:

\begin{proposition}\label{prop:basic}
	Let $P \in \mathcal{S}(\pi)$ and $Q \in \mathcal{I}(\pi) \cap \mathcal{L}$ be an isometric involution transition matrix. For $L \in \mathcal{L}(\pi,Q)$, we then have
	\begin{align}
		{}^Q D_{KL}(P \| L) &= {}^Q D_{KL}(P \| \overline{P}) + {}^Q D_{KL}(\overline{P} \| L), \label{eq:QPyth}\\
		D_{KL}^Q(P \| L) &= D_{KL}^Q(P \| \overline{P}) +  D_{KL}^Q(\overline{P} \| L). \label{eq:PythQ}
	\end{align}
\end{proposition}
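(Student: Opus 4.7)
The plan is to reduce both Pythagorean identities to the statement that $Q\overline{P}$ (respectively $\overline{P}Q$) is the additive reversibilization of $QP$ (respectively $PQ$) in $\ell^2(\pi)$, and then to verify the required cross-term cancellation by a symmetry argument. The key observations are: first, by definition ${}^Q D_{KL}(P \| L) = D^\pi_{KL}(QP \| QL)$ is the ordinary KL divergence rate between $QP$ and $QL$; second, when $L \in \mathcal{L}(\pi,Q)$, a direct calculation gives $(QL)^* = L^* Q = (QLQ)Q = QL$, so $QL$ is a $\pi$-reversible Markov kernel, and the same computation shows $Q\overline{P}$ is $\pi$-reversible; third, expanding the definition yields $Q\overline{P} = \tfrac{1}{2}(QP + P^*Q) = \tfrac{1}{2}(QP + (QP)^*)$.

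To prove \eqref{eq:QPyth}, I would expand both sides from their definitions. The identity reduces to the vanishing of the cross term
\[
\sum_{x,y \in \mathcal{X}} \pi(x) \bigl[QP(x,y) - Q\overline{P}(x,y)\bigr] \ln \dfrac{Q\overline{P}(x,y)}{QL(x,y)} = 0.
\]
Writing $g(x,y) := \ln\bigl[Q\overline{P}(x,y)/QL(x,y)\bigr]$, the $\pi$-reversibility of both $Q\overline{P}$ and $QL$ gives the detailed-balance relations $\pi(x)Q\overline{P}(x,y) = \pi(y)Q\overline{P}(y,x)$ and $\pi(x)QL(x,y) = \pi(y)QL(y,x)$, which imply that the ratio inside the logarithm is symmetric under $x \leftrightarrow y$; hence $g(x,y) = g(y,x)$. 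On the other hand, $QP - Q\overline{P} = \tfrac{1}{2}(QP - (QP)^*)$, and the adjoint relation $\pi(x)(QP)^*(x,y) = \pi(y)QP(y,x)$ combined with a relabeling $x \leftrightarrow y$ and the symmetry of $g$ yields
\[
\sum_{x,y} \pi(x)(QP)^*(x,y)\, g(x,y) = \sum_{x,y} \pi(x)QP(x,y)\, g(y,x) = \sum_{x,y} \pi(x)QP(x,y)\, g(x,y),
\]
so the cross term vanishes.

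The second identity \eqref{eq:PythQ} follows by an entirely parallel argument with $PQ$ in place of $QP$ and $\overline{P}Q = \tfrac{1}{2}(PQ + (PQ)^*)$ in place of $Q\overline{P}$, noting that $L \in \mathcal{L}(\pi,Q)$ implies $LQ$ is $\pi$-reversible. Alternatively, one can invoke the duality property in Proposition \ref{prop:deformKLprop}(2) to rewrite each term $D_{KL}^Q(\,\cdot\,\|\,\cdot\,)$ as ${}^Q D_{KL}$ applied to the time-reversals, and then apply \eqref{eq:QPyth} to $(P^*, L^*)$, using the easily-checked facts $\overline{(P^*)}(Q) = \overline{P}(Q)^*$ and $L^* \in \mathcal{L}(\pi,Q)$ whenever $L \in \mathcal{L}(\pi,Q)$.

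There is no deep obstacle in this argument. The only subtle point is that the symmetry of $g$ relies on the assumption $L \in \mathcal{L}(\pi,Q)$, not merely $L \in \mathcal{L}$, so the recognition that both $QL$ and $Q\overline{P}$ sit inside the set of $\pi$-reversible kernels is precisely what turns the usual Csiszár-type inequality into an equality in this setting and makes the cross term identically zero.
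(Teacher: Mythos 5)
Your proof is correct and follows essentially the same route as the paper: the identity reduces to the vanishing of the cross term $\sum_{x,y}\pi(x)\bigl[QP(x,y)-Q\overline{P}(x,y)\bigr]\ln\bigl(Q\overline{P}(x,y)/QL(x,y)\bigr)$, which both you and the paper establish from the adjoint relation for $QP$ together with the symmetry of the log-ratio coming from $\overline{P},L\in\mathcal{L}(\pi,Q)$ (the paper phrases this as equality of the $QP$-weighted and $P^*Q$-weighted sums, which is the same computation). Your duality-based derivation of \eqref{eq:PythQ} from \eqref{eq:QPyth} applied to $(P^*,L^*)$ is exactly the paper's argument as well.
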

\begin{remark}[Generalization to $f$-divergences]
	The focus of this paper is on convergence to equilibrium of finite Markov chains under either the KL divergence or the squared-Frobenius norm. In this remark, we discuss generalizing the results thus far to $f$-divergences. Let $f: \mathbb{R}_+ \to \mathbb{R}$ be a convex function that satisfies $f(1) = 0$. Let $\pi \in \mathcal{P}(\mathcal{X})$, $Q \in \mathcal{I}(\pi) \cap \mathcal{L}$ be an isometric involution transition matrix, $P,L \in \mathcal{L}$. The $\pi$-weighted $f$-divergence from $L$ to $P$ is defined to be
	\begin{align*}
		D^{\pi}_f(P \| L) := \sum_{x,y} \pi(x) L(x,y) f\left(\dfrac{P(x,y)}{L(x,y)}\right),
	\end{align*}
	where several standard conventions apply in the right hand side. Analogously, we define the $Q$-left-deformed and $Q$-right-deformed $f$-divergence to be respectively
	\begin{align*}
		{}^Q D^{}_{f}(P \| L) := D^{\pi}_{f}(QP \| QL), \quad D^{Q}_{f}(P \| L) := D^{\pi}_{f}(PQ \| LQ). 
	\end{align*}
	In the special case of $f(t) = t \ln t$, we recover the KL divergence counterparts.

	Suppose that $f$ is continuous. For a fixed $P$, since $L \mapsto {}^Q D^{\pi}_{f}(P \| L)$ and $L \mapsto D^{Q}_{f}(P \| L)$ are convex in $L$ (see e.g. \cite{polyanskiy2022information}) and $\mathcal{L}(\pi,Q)$ is a non-empty convex and compact set, a minimizer exists for the following projection problems
	\begin{align*}
		\argmin_{L \in \mathcal{L}(\pi,Q)} {}^Q D^{}_{f}(P \| L), \quad \argmin_{L \in \mathcal{L}(\pi,Q)} D^{Q}_{f}(P \| L).
	\end{align*} 

	To determine a closed-form expression or to establish a Pythagorean identity under common choices of $f$, one can follow similar calculations as in \cite{billera2001geometric,DM09,wolfer2021information,CW23}. As an illustration, for $P \in \mathcal{S}(\pi)$ we let $e(P)$ to be the so-called exponential reversiblization of $P$. Using the Pythagorean identity developed in \cite[Theorem $6.1$]{wolfer2021information}, we see that, for the choice of $f(t) = - \ln t$ that generates the reverse KL divergence and $L \in \mathcal{L}(\pi,Q)$,
	\begin{align*}
		{}^Q D^{}_{f}(P \| L) = D^{\pi}_{f}(QP \| QL) &= D^{\pi}_f(QP \| e(QP)) + D^{\pi}_f(e(QP) \| QL) \\
		&= {}^Q D^{}_{f}(P \| Qe(QP)) + {}^Q D^{}_{f}(Qe(QP) \| L).
	\end{align*}
	Thus, under the $Q$-left-deformed reverse KL divergence, the unique projection of $P$ onto $\mathcal{L}(\pi,Q)$ is $Qe(QP)$ rather than $\overline{P}(Q)$.
	
	Generalizating the above discussions to $f$-divergences, one can expect the projection to be of the form $Q g(QP)$ or $g(PQ)Q$, where the map $g$ is a type of reversiblization. For example, $g$ is the additive reversiblization under KL divergence while $g = e$ is the exponential reversiblization under the reverse KL divergence. We leave this direction as a future work.
\end{remark}

\subsection{Projection under the squared-Frobenius norm}\label{subsec:projectfrob}

In this subsection, we consider projection of $P$ under the squared-Frobenius norm. Let $n = |\mathcal{X}|$ and we write $\mathcal{M}$ to be the set of real-valued matrices on $\mathcal{X}$, that is,
\begin{align*}
	\mathcal{M} = \mathcal{M}(\mathcal{X}):= \{M \in \mathbb{R}^{n \times n}\},
\end{align*}
equipped with the Frobenius inner product defined to be, for $M,N \in \mathcal{M}$,
\begin{align*}
	\langle M,N \rangle_F := \mathrm{Tr}(M^* N)
\end{align*}
and the induced Frobenius norm $\norm{A}_F := \sqrt{ \langle A,A \rangle_F}$, where $\mathrm{Tr}(M)$ is the trace of $M$ and $M^*(x,y) := \frac{\pi(y)}{\pi(x)} M(y,x)$ for all $x,y$ is the $\ell^2(\pi)$-adjoint of $M$. Define for $Q \in \mathcal{I}(\pi) \cap \mathcal{L}$,
\begin{align*}
	\mathcal{M}(\pi,Q) := \{M \in \mathcal{M};~ M = QMQ\}.
\end{align*}
We also define
\begin{align*}
	\overline{M}(Q) := \dfrac{1}{2}(M + QMQ).
\end{align*}
Note that $\overline{M}(Q)$ is a projection in the functional analytic sense, since it can be checked that for all $M \in \mathcal{M}$,
\begin{align*}
	\overline{\overline{M}(Q)}(Q) = \dfrac{1}{2}(\overline{M}(Q) + Q\overline{M}(Q)Q) = \overline{M}(Q).
\end{align*}
In fact, it is an orthogonal projection. To see that, we observe that
\begin{align*}
	\langle \overline{M}(Q),N \rangle_F = \dfrac{1}{2} \langle M,N \rangle_F + \dfrac{1}{2} \mathrm{Tr}(N^* Q M Q) &= \dfrac{1}{2} \langle M,N \rangle_F + \dfrac{1}{2} \mathrm{Tr}(Q N^* Q M) \\
	&= \langle M,\overline{N}(Q) \rangle_F,
\end{align*}
where the second equality follows from the cyclic property of trace and $Q^* = Q$. Note that $\mathcal{M}(\pi,Q)$ is a subspace of the Hilbert space $(\mathcal{M},\langle \cdot,\cdot \rangle_F)$. However, $\mathcal{L}(\pi,Q)$ is not a subspace since it is not closed under scalar multiplication. For example, if $P \in \mathcal{L}(\pi,Q)$ and $\alpha < 0$, then $\alpha P \notin \mathcal{L}(\pi,Q)$.

We now state that $\overline{M}(Q)$ is the unique orthogonal projection of $M$ onto $\mathcal{M}(\pi,Q)$ under the squared-Frobenius norm:

\begin{proposition}[Pythagorean identity under squared-Frobenius norm]\label{prop:projectFrob}
	Let $M \in \mathcal{M}$, $Q \in \mathcal{I}(\pi) \cap \mathcal{L}$ and $N \in \mathcal{M}(\pi,Q)$. We have
	\begin{align*}
		\norm{M - N}_F^2 = \norm{M - \overline{M}(Q)}_F^2 + \norm{\overline{M}(Q) - N}_F^2.
	\end{align*}
	In particular, this yields $\overline{M}(Q)$ is the unique projection of $M$ onto $\mathcal{M}(\pi,Q)$ under the squared-Frobenius norm.
\end{proposition}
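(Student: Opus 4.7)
The plan is to exploit the two properties of the map $M \mapsto \overline{M}(Q)$ that are already established in the paragraphs leading up to the proposition: (i) it is idempotent (so it is a projection in the algebraic sense), and (ii) it is self-adjoint with respect to $\langle \cdot,\cdot \rangle_F$, in the sense that $\langle \overline{M}(Q), N \rangle_F = \langle M, \overline{N}(Q) \rangle_F$. Together these imply $\overline{M}(Q)$ is the orthogonal projection onto $\mathcal{M}(\pi,Q)$, so the desired identity follows from the standard Pythagorean theorem for orthogonal projections on a Hilbert space.

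Concretely, I would begin by writing the decomposition
\begin{align*}
	M - N = \bigl(M - \overline{M}(Q)\bigr) + \bigl(\overline{M}(Q) - N\bigr),
\end{align*}
and expanding $\norm{M-N}_F^2$ via the inner product to obtain the two squared-norm terms appearing in the statement, plus a cross term $2\langle M - \overline{M}(Q),~ \overline{M}(Q) - N \rangle_F$. The task then reduces to verifying that this cross term vanishes.

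For the cross term, observe first that $\overline{M}(Q) - N \in \mathcal{M}(\pi,Q)$: by hypothesis $N = QNQ$, and by the idempotency calculation already in the text $\overline{M}(Q) = Q\overline{M}(Q)Q$, so their difference is fixed by conjugation by $Q$. Hence it suffices to show that $M - \overline{M}(Q)$ is orthogonal to every $K \in \mathcal{M}(\pi,Q)$. Applying the self-adjointness relation derived earlier with $K$ in place of $N$,
\begin{align*}
	\langle \overline{M}(Q), K \rangle_F = \langle M, \overline{K}(Q) \rangle_F = \langle M, K \rangle_F,
\end{align*}
where the second equality uses $K = QKQ$, so $\overline{K}(Q) = K$. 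Subtracting gives $\langle M - \overline{M}(Q), K \rangle_F = 0$, killing the cross term and establishing the Pythagorean identity.

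Finally, for the uniqueness claim, I would apply the identity just proved to an arbitrary $N \in \mathcal{M}(\pi,Q)$: since $\norm{M - \overline{M}(Q)}_F^2$ is a fixed nonnegative quantity independent of $N$, the minimum of $\norm{M - N}_F^2$ over $N \in \mathcal{M}(\pi,Q)$ is attained precisely when $\norm{\overline{M}(Q) - N}_F^2 = 0$, i.e.\ $N = \overline{M}(Q)$. I do not anticipate any genuine obstacle — the heavy lifting (idempotency and the adjoint identity) has already been carried out in the excerpt; the present proposition is essentially an assembly step applying the Hilbert-space Pythagorean theorem to the subspace $\mathcal{M}(\pi,Q) \subseteq (\mathcal{M}, \langle \cdot,\cdot \rangle_F)$.
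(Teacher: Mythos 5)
Your proof is correct. Both you and the paper start from the same decomposition $M-N=\bigl(M-\overline{M}(Q)\bigr)+\bigl(\overline{M}(Q)-N\bigr)$ and reduce the identity to the vanishing of the cross term; the difference lies in how that cross term is killed. The paper does it by a bare-hands trace manipulation: it shows $\mathrm{Tr}(A)=\mathrm{Tr}(-A)$ for $A=\bigl(\tfrac{M-QMQ}{2}\bigr)^{*}\bigl(\tfrac{M+QMQ}{2}-N\bigr)$ using the cyclic property of the trace, $Q^{2}=I$, $Q^{*}=Q$, and $N^{*}=QN^{*}Q$. You instead route the argument through the two operator-theoretic facts already recorded in the preamble — idempotency of $M\mapsto\overline{M}(Q)$ and the adjoint identity $\langle\overline{M}(Q),K\rangle_F=\langle M,\overline{K}(Q)\rangle_F$ — which, combined with $\overline{K}(Q)=K$ for $K\in\mathcal{M}(\pi,Q)$, immediately give $\langle M-\overline{M}(Q),K\rangle_F=0$ for every $K$ in the subspace, and in particular for $K=\overline{M}(Q)-N$. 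Your version is shorter and more conceptual (it is exactly the Hilbert-space statement that a self-adjoint idempotent is an orthogonal projection), and it avoids repeating trace algebra that the text has already done; the paper's version is self-contained within the proof and does not lean on the preceding discussion. The underlying algebraic content is the same, and your uniqueness argument matches the standard conclusion. One small point worth making explicit if you write this up: the observation that $\mathcal{M}(\pi,Q)$ is a linear subspace (so that $\overline{M}(Q)-N$ indeed lies in it) is needed, and you do supply it via $\overline{M}(Q)=Q\overline{M}(Q)Q$ and $N=QNQ$.
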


Using both Proposition \ref{prop:basic} and \ref{prop:projectFrob}, we see that, for a given $P \in \mathcal{L}(\pi)$, not only $\overline{P}(Q)$ is the unique information projection of $P$ onto $\mathcal{L}(\pi,Q)$ under the deformed divergences $D^Q_{KL}$ and ${}^Q D_{KL}$, it is also the unique orthogonal projection of $P$ onto $\mathcal{M}(\pi,Q)$ under the squared-Frobenius norm.

\section{Comparisons of some samplers}\label{sec:compare}

Given $\pi \in \mathcal{P}(\mathcal{X}), P \in \mathcal{S}(\pi)$ and $Q \in \mathcal{I}(\pi) \cap \mathcal{L}$ being an isometric involution transition matrix, the aim of this section is to compare the convergence of various natural samplers associated with these matrices, such as $P$, $QP$, $PQ$, $QPQ$, $\overline{P}(Q)$ or more generally the mixture $\alpha P + (1-\alpha) QPQ$ for $\alpha \in [0,1]$.

\subsection{Comparisons of entropic parameters}

In this section, we compare parameters related to the KL divergence and entropy.

To this end, let us recall that the KL-divergence Dobrushin coefficient \cite[Definition $2.7$]{WC23} is defined to be
\begin{definition}\label{def:KLDobrushin}
	Let $\pi \in \mathcal{P}(\mathcal{X}), P \in \mathcal{S}(\pi)$. Then the KL-divergence Dobrushin coefficient of $P$, $c_{KL}(P)$, is defined to be
	\begin{align*}
		c_{KL}(P) := \max_{M,N \in \mathcal{S}(\pi), M \neq N} \dfrac{D^{\pi}_{KL}(MP \| NP)}{D^{\pi}_{KL}(M \| N)} \in [0,1].
	\end{align*}
\end{definition}

It can readily be seen that, for $n \in \mathbb{N}$,
\begin{align*}
	D^\pi_{KL}(P^n \| \Pi) \leq c_{KL}(P)^{n-1} D^\pi_{KL}(P \| \Pi),
\end{align*}
and hence $c_{KL}(P)$ can be understood as an upper bound on the convergence rate of $P^n$ towards $\Pi$ under $D^\pi_{KL}$. Thus, a smaller value of $c_{KL}(P)$ indicates a smaller upper bound. We remark that the classical Dobrushin coefficient is defined by replacing $D^\pi_{KL}$ with the $\pi$-weighted total variation distance. More generally, one can define a $f$-divergence Dobrushin coefficient as in \cite{Raginsky2016}.

Making use of the KL-divergence Dobrushin coefficient, we first show that, for $\pi$-stationary transition matrices, the original $\pi$-weighted KL divergence in fact coincides with the deformed KL divergences that we introduce earlier in Section \ref{sec:basicdef}.

\begin{proposition}\label{prop:deformKLorigKL}
	Let $\pi \in \mathcal{P}(\mathcal{X}), M,N \in \mathcal{L}$ and $Q \in \mathcal{I}(\pi) \cap \mathcal{L}$. We have
	\begin{align*}
		D^\pi_{KL}(M \| N) = D^Q_{KL}(M \| N).
	\end{align*}
	If $M,N \in \mathcal{S}(\pi)$, then
	\begin{align*}
		D^\pi_{KL}(M \| N) = {}^Q D_{KL}(M \| N).
	\end{align*}
\end{proposition}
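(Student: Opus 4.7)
The plan is to reduce both identities to a change of variables, exploiting the fact (from Proposition \ref{prop:characterizIpi}) that $Q=Q_\psi$ for some involution $\psi \in \Psi(\pi)$, so that multiplication by $Q$ on the right or left simply relabels a column or row index via $\psi$.

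First I would unpack the right-multiplication by $Q$. Since $Q_\psi(z,y)=\delta_{y=\psi(z)}$ and $\psi$ is an involution, we have
\begin{align*}
(MQ)(x,y) = \sum_{z} M(x,z)\,\delta_{y=\psi(z)} = M(x,\psi(y)),
\end{align*}
and likewise $(NQ)(x,y)=N(x,\psi(y))$. Substituting these into the definition of $D^{Q}_{KL}$ and applying the substitution $y' = \psi(y)$ (which, since $\psi$ is a bijection on $\mathcal{X}$, is just a relabelling of the inner summation index), yields
\begin{align*}
D^{Q}_{KL}(M\|N) = \sum_x \pi(x) \sum_{y'} M(x,y')\ln\!\frac{M(x,y')}{N(x,y')} = D^{\pi}_{KL}(M\|N).
\end{align*}
This gives the first identity. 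Note that the argument does not require any $\pi$-stationarity of $M$ or $N$; all that is used is that $\psi$ is a permutation.

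For the second identity I would do the analogous calculation on the left: $(QM)(x,y) = M(\psi(x),y)$ and similarly for $N$. Substituting into ${}^{Q}D_{KL}(M\|N)$ and changing the outer summation variable to $x'=\psi(x)$ gives
\begin{align*}
{}^{Q}D_{KL}(M\|N) = \sum_{x'} \pi(\psi(x')) \sum_{y} M(x',y)\ln\!\frac{M(x',y)}{N(x',y)}.
\end{align*}
Here I invoke the key property that $\psi \in \Psi(\pi)$, i.e.\ $\pi(\psi(x')) = \pi(x')$ for every $x'$, to replace $\pi(\psi(x'))$ by $\pi(x')$ and recover $D^{\pi}_{KL}(M\|N)$.

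There isn't really a hard step in this proof; the only subtlety is recognising which structural property of $Q$ is being used where. The first identity is ``automatic'' for any permutation $Q$, while the second identity relies specifically on the equi-probability condition $\pi\circ\psi = \pi$ built into the definition of $\mathcal{I}(\pi)\cap\mathcal{L}$. If anything, the mild point to highlight is that $M,N \in \mathcal{S}(\pi)$ is not actually required in the change-of-variables argument for the second identity either — only $Q \in \mathcal{I}(\pi)\cap\mathcal{L}$ is — though I would simply state and prove the identity as written.
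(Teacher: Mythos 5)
Your proof is correct, but it takes a genuinely different route from the paper's. You argue by direct change of variables: writing $Q=Q_\psi$, you observe $(MQ)(x,y)=M(x,\psi(y))$ and $(QM)(x,y)=M(\psi(x),y)$, relabel the inner (resp.\ outer) summation index by the bijection $\psi$, and for the left-deformed case invoke the equi-probability property $\pi\circ\psi=\pi$ to absorb the reweighting of the outer sum. The paper instead proves the first identity by a contraction sandwich: since $Q^2=I$, one has $D^\pi_{KL}(M\|N)=D^\pi_{KL}((MQ)Q\|(NQ)Q)\leq c_{KL}(Q)\,D^\pi_{KL}(MQ\|NQ)\leq D^\pi_{KL}(MQ\|NQ)\leq c_{KL}(Q)\,D^\pi_{KL}(M\|N)\leq D^\pi_{KL}(M\|N)$, forcing equality throughout; it then deduces the second identity from the duality ${}^QD_{KL}(M\|N)=D^Q_{KL}(M^*\|N^*)$ of Proposition \ref{prop:deformKLprop} together with the bisection property of \cite[Theorem III.1]{CW23}, which is where the hypothesis $M,N\in\mathcal{S}(\pi)$ enters. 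The paper's argument is softer and would transfer to any divergence satisfying a data-processing inequality, without needing the explicit permutation structure of $Q$; yours is elementary and self-contained (no appeal to submultiplicativity of $c_{KL}$ or to the external bisection property), and it has the genuine payoff of isolating exactly which structural feature is used where --- in particular your observation that the second identity needs only $\pi\circ\psi=\pi$ and not $M,N\in\mathcal{S}(\pi)$ is a correct mild strengthening of the statement as written.
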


Our second result states that, when measured by $D^{\pi}_{KL}$, the KL divergence from $\Pi$ to any of $P, PQ, QP, QPQ$ are all the same. Analogous results hold for the KL-divergence Dobrushin coefficient. We also demonstrate that the projection is trace-preserving in the sense that $\mathrm{Tr}(P) = \mathrm{Tr}(\overline{P}(Q))$, a property that we shall utilize in Section \ref{sec:maxspeedlimit} below.

\begin{proposition}\label{prop:converge}
	Let $\pi \in \mathcal{P}(\mathcal{X}), P \in \mathcal{S}(\pi)$ and $Q \in \mathcal{I}(\pi) \cap \mathcal{L}$ to be an isometric involution transition matrix. Let $\Pi$ be the matrix where each row equals to $\pi$. We have
	\begin{itemize}
		\item(One-step contraction measured by $D^{\pi}_{KL}$) \begin{align}\label{eq:compare}
			D^{\pi}_{KL}(P \| \Pi) &= D^{\pi}_{KL}(P Q \| \Pi) = D^{\pi}_{KL}(Q P \| \Pi) = D^{\pi}_{KL}(Q P Q\| \Pi).
		\end{align}
		
		\item(KL-divergence Dobrushin coefficient)
		\begin{align}\label{eq:compare3}
			c_{KL}(P) = c_{KL}(PQ) = c_{KL}(QP) = c_{KL}(QPQ).
		\end{align}
	
		\item(Projection is trace-preserving)
		\begin{align}\label{eq:compare4}
			\mathrm{Tr}(P) = \mathrm{Tr}(QP^*Q) = \mathrm{Tr}(\overline{P}(Q)).
		\end{align}
	\end{itemize}
	
\end{proposition}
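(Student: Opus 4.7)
The plan is to reduce all three bullets to a handful of elementary observations: (a) that $Q \Pi = \Pi Q = \Pi$, since by Proposition \ref{prop:characterizIpi} we have $Q = Q_\psi$ with $\pi(\psi(y)) = \pi(y)$, making $\Pi$ fixed under both one-sided actions of $Q$; (b) Proposition \ref{prop:deformKLorigKL}, which combined with the definitions of $D^Q_{KL}$ and ${}^Q D_{KL}$ effectively says that right- or left-multiplying the two arguments of a KL divergence by the same $Q$ does not change its value; and (c) the cyclic property of the trace together with $Q^2 = I$ and $\mathrm{Tr}(P^*) = \mathrm{Tr}(P)$.

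For the one-step contraction bullet, I would, for example, write
\begin{align*}
D^\pi_{KL}(PQ \| \Pi) = D^\pi_{KL}(PQ \| \Pi Q) = D^Q_{KL}(P \| \Pi) = D^\pi_{KL}(P \| \Pi),
\end{align*}
where the first equality uses $\Pi Q = \Pi$, the second is by the definition of $D^Q_{KL}$, and the third is Proposition \ref{prop:deformKLorigKL}. The analogous identities for $QP$ and $QPQ$ follow by the mirror recipe: use $Q \Pi = \Pi$ together with the ${}^Q D_{KL}$ half of Proposition \ref{prop:deformKLorigKL} for $QP$, and chain the two reductions for $QPQ$.

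For the Dobrushin-coefficient bullet, the same algebraic moves apply inside the maximization defining $c_{KL}$. For $c_{KL}(PQ)$, Proposition \ref{prop:deformKLorigKL} yields $D^\pi_{KL}(MPQ \| NPQ) = D^\pi_{KL}(MP \| NP)$, so the numerator already matches the one in $c_{KL}(P)$ and the two maxima coincide. For $c_{KL}(QP)$, observe first that $M \mapsto MQ$ is a bijection of $\mathcal{S}(\pi)$ onto itself (since $Q \in \mathcal{L}(\pi) \subseteq \mathcal{S}(\pi)$ and $Q^2 = I$); relabelling $M' = MQ$, $N' = NQ$ and applying Proposition \ref{prop:deformKLorigKL} to the denominator, via $D^\pi_{KL}(M \| N) = D^\pi_{KL}(M'Q \| N'Q) = D^\pi_{KL}(M' \| N')$, reduces $c_{KL}(QP)$ to $c_{KL}(P)$. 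The case $c_{KL}(QPQ)$ combines both reductions.

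Finally, for the trace bullet, the cyclic property of trace and $Q^2 = I$ give $\mathrm{Tr}(QP^*Q) = \mathrm{Tr}(Q^2 P^*) = \mathrm{Tr}(P^*)$, and the definition $P^*(x,x) = \frac{\pi(x)}{\pi(x)} P(x,x) = P(x,x)$ forces $\mathrm{Tr}(P^*) = \mathrm{Tr}(P)$; averaging then yields $\mathrm{Tr}(\overline{P}(Q)) = \mathrm{Tr}(P)$. I do not foresee any real obstacle here: the entire proposition is a bookkeeping exercise exploiting invariances of $\Pi$ and of the KL divergence under the two one-sided actions of $Q$.
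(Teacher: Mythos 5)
Your proof is correct, and for the first and third bullets it is essentially the paper's argument: \eqref{eq:compare} follows from Proposition \ref{prop:deformKLorigKL} together with the invariance $Q\Pi = \Pi Q = \Pi$ (which the paper leaves implicit), and \eqref{eq:compare4} is exactly the trace bookkeeping the paper performs. The one genuine divergence is in \eqref{eq:compare3}. The paper invokes submultiplicativity of the KL-divergence Dobrushin coefficient from \cite{WC23}, sandwiching $c_{KL}(P) = c_{KL}((PQ)Q) \leq c_{KL}(PQ)\,c_{KL}(Q) \leq c_{KL}(PQ)$ and then running the same inequality with $P$ replaced by $PQ$ to close the loop. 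You instead work directly inside the maximization defining $c_{KL}$: for $PQ$ you note the numerator $D^{\pi}_{KL}(MPQ \| NPQ)$ collapses to $D^{\pi}_{KL}(MP \| NP)$ by Proposition \ref{prop:deformKLorigKL}, and for $QP$ you reparametrize the feasible set via the bijection $M \mapsto MQ$ of $\mathcal{S}(\pi)$ (valid since $Q \in \mathcal{S}(\pi)$ and $Q^2 = I$) while using the same invariance on the denominator. Your route is self-contained given Proposition \ref{prop:deformKLorigKL} and avoids citing the external submultiplicativity result; the paper's route is shorter once that lemma is granted and requires no change of variables. Both are sound; the only point worth making explicit in your version is that $M \neq N$ iff $MQ \neq NQ$, which is immediate from the invertibility of $Q$, so there is no gap.
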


Our next result states that, the KL divergence from $\Pi$ to $\overline{P}(Q)$ is at least smaller than that to $P$.

\begin{proposition}[Pythagorean identity]\label{prop:converge2}
	Let $\pi \in \mathcal{P}(\mathcal{X}), P \in \mathcal{S}(\pi)$ and $Q \in \mathcal{I}(\pi) \cap \mathcal{L}$ to be an isometric involution transition matrix. Let $\Pi$ be the matrix where each row equals to $\pi$. We have
	\begin{align*}
		D^{\pi}_{KL}(\overline{P} \| \Pi) &\leq  D^{\pi}_{KL}(P  \| \overline{P} ) +  D^{\pi}_{KL}(\overline{P} \| \Pi) = D^{\pi}_{KL}(P \| \Pi),
	\end{align*}
	and the equality holds if and only if $P \in \mathcal{L}(\pi,Q)$ so that $\overline{P}(Q) = P$. 
	
	Similarly, if $P$ is further assumed to be $\pi$-reversible, then
	\begin{align*}
		c_{KL}(\overline{P}(Q)) &\leq c_{KL}(P).
	\end{align*}
\end{proposition}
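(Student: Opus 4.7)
The two statements have quite different flavours, so I would treat them separately.

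\textbf{Pythagorean identity for the first assertion.} My plan is to apply Proposition \ref{prop:basic} with $L = \Pi$. For this I first need to verify $\Pi \in \mathcal{L}(\pi,Q)$. A direct computation shows $\Pi^*(x,y) = \frac{\pi(y)}{\pi(x)}\pi(x) = \pi(y) = \Pi(x,y)$, so $\Pi$ is $\pi$-reversible; and because $Q \in \mathcal{I}(\pi)\cap\mathcal{L}$ is stochastic with $\pi Q = \pi$ (owing to $\pi$-reversibility of $Q$), a short calculation gives $(Q\Pi Q)(x,y) = \sum_v \pi(v)Q(v,y) = \pi(y) = \Pi(x,y)$. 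Hence $\Pi^* = Q\Pi Q$, so $\Pi \in \mathcal{L}(\pi,Q)$. Proposition \ref{prop:basic} then yields
\begin{align*}
{}^Q D_{KL}(P\|\Pi) = {}^Q D_{KL}(P\|\overline{P}) + {}^Q D_{KL}(\overline{P}\|\Pi).
\end{align*}
Since $P, \overline{P}, \Pi \in \mathcal{S}(\pi)$, Proposition \ref{prop:deformKLorigKL} lets me replace ${}^Q D_{KL}$ by the ordinary $D^\pi_{KL}$, giving the claimed identity on the right. The inequality on the left is then trivial from ${}^Q D_{KL}(P\|\overline{P}) \geq 0$, and the equality case follows from Proposition \ref{prop:deformKLprop}(1): equality forces $P = \overline{P}(Q)$, which by the defining relation \eqref{def:overlineP} happens exactly when $P = QP^*Q$, i.e.\ $P \in \mathcal{L}(\pi,Q)$.

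\textbf{Dobrushin coefficient comparison under reversibility.} Here the reversibility assumption enters so that $P^* = P$, hence $\overline{P}(Q) = \tfrac{1}{2}(P + QPQ)$. Fix $M, N \in \mathcal{S}(\pi)$ with $M \neq N$. My plan is to apply the joint convexity of $(A,B) \mapsto D^\pi_{KL}(A\|B)$ to the mixture decomposition
\begin{align*}
M\overline{P} = \tfrac{1}{2}(MP) + \tfrac{1}{2}(MQPQ), \qquad N\overline{P} = \tfrac{1}{2}(NP) + \tfrac{1}{2}(NQPQ),
\end{align*}
which yields
\begin{align*}
D^\pi_{KL}(M\overline{P}\,\|\,N\overline{P}) \;\leq\; \tfrac{1}{2}\,D^\pi_{KL}(MP\|NP) + \tfrac{1}{2}\,D^\pi_{KL}(MQPQ\|NQPQ).
\end{align*}
Bounding each term by the definition of the KL-Dobrushin coefficient gives $D^\pi_{KL}(MP\|NP) \leq c_{KL}(P)\,D^\pi_{KL}(M\|N)$ and $D^\pi_{KL}(MQPQ\|NQPQ) \leq c_{KL}(QPQ)\,D^\pi_{KL}(M\|N)$. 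The key input is equation \eqref{eq:compare3} in Proposition \ref{prop:converge}, which states $c_{KL}(QPQ) = c_{KL}(P)$. Combining the two bounds and dividing by $D^\pi_{KL}(M\|N)$ (positive since $M \neq N$), I obtain $c_{KL}(\overline{P}(Q)) \leq c_{KL}(P)$ after taking the supremum.

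\textbf{Main obstacle.} The first assertion is essentially a bookkeeping exercise once one spots that $\Pi \in \mathcal{L}(\pi,Q)$, and the Pythagorean machinery of Proposition \ref{prop:basic} does all the work. For the second assertion the conceptual hurdle is recognizing that reversibility is what allows one to write $\overline{P}$ as a convex combination of $P$ and $QPQ$ (both of which have a Dobrushin coefficient equal to $c_{KL}(P)$); without $P = P^*$ one would face the term $QP^*Q$, and it is not clear from the results in Section \ref{sec:compare} that $c_{KL}(P^*)$ equals $c_{KL}(P)$. So the main point to articulate carefully is the interplay between reversibility, joint convexity, and the invariance in \eqref{eq:compare3}.
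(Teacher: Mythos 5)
Your proof is correct and follows essentially the same route as the paper: the first assertion is obtained by taking $L = \Pi$ in Proposition \ref{prop:basic} (after checking $\Pi \in \mathcal{L}(\pi,Q)$, which you verify explicitly and the paper leaves implicit), converting the deformed divergences to $D^\pi_{KL}$ via Proposition \ref{prop:deformKLorigKL}, and invoking non-negativity for the inequality and equality case. For the Dobrushin coefficient, the paper simply cites the convexity of $c_{KL}$ from \cite[Proposition 3.9]{WC23} together with \eqref{eq:compare3}, whereas you re-derive that convexity inline from the joint convexity of the KL divergence; this is the same argument with one cited lemma unpacked, and your remark that reversibility is exactly what lets one replace $QP^*Q$ by $QPQ$ correctly pinpoints where the extra hypothesis enters.
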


\begin{remark}
	Note that by Proposition \ref{prop:converge}, we have
	\begin{align*}
		D^{\pi}_{KL}(\overline{P}(Q) \| \Pi) = D^{\pi}_{KL}((1/2)(PQ + QP^*) \| \Pi).
	\end{align*}
\end{remark}

In view of Proposition \ref{prop:converge} and \ref{prop:converge2}, given an arbitrary $\pi$-stationary $P$, it is thus advantageous to use the transition matrix $\overline{P^n}(Q)$ over other competing samplers such as $P^n, Q P^n, P^n Q, QP^nQ$, when measured by $D^{\pi}_{KL}$.

For $\alpha \in [0,1]$, we define 
\begin{align}\label{eq:PbaralphaQ}
	\overline{P}_{\alpha}(Q) := \alpha P + (1-\alpha) QPQ.
\end{align}
In the special case of $\alpha = 1/2$, we recover $\overline{P}_{1/2}(Q) = \overline{P}(Q)$ when $P \in \mathcal{L}(\pi)$. Also, we compute that
\begin{align}\label{eq:Pbarbar}
	\overline{\overline{P}_\alpha(Q)}(Q) = \overline{\alpha P + (1-\alpha) QPQ}(Q) = \overline{P}(Q).
\end{align}

An interesting consequence of Proposition \ref{prop:converge2} is that the choice of $\alpha = 1/2$ is optimal within the family $(\overline{P}_\alpha(Q))_{\alpha \in [0,1]}$ in the sense that it minimizes the KL divergence $D^{\pi}_{KL}$ and the KL-divergence Dobrushin coefficient when $P$ is $\pi$-reversible:

\begin{corollary}[Optimality of $\alpha = 1/2$]\label{cor:optimal1/2}
	Let $P \in \mathcal{L}(\pi)$ and $Q \in \mathcal{I}(\pi) \cap \mathcal{L}$ be an isometric involution transition matrix. We have
	\begin{align*}
		\min_{\alpha \in [0,1]} D^{\pi}_{KL}(\overline{P}_\alpha(Q) \| \Pi) &= D^{\pi}_{KL}(\overline{P}(Q) \| \Pi), \\
		\min_{\alpha \in [0,1]} c_{KL}(\overline{P}_\alpha(Q)) &= c_{KL}(\overline{P}(Q)).
	\end{align*}
\end{corollary}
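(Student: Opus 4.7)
The plan is to deduce the corollary almost mechanically by applying Proposition \ref{prop:converge2} with $\overline{P}_\alpha(Q)$ playing the role of $P$, and then exploiting the identity \eqref{eq:Pbarbar}.

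First I would verify that $\overline{P}_\alpha(Q) \in \mathcal{S}(\pi)$ for every $\alpha \in [0,1]$: since $P \in \mathcal{L}(\pi) \subseteq \mathcal{S}(\pi)$ and $QPQ \in \mathcal{S}(\pi)$ (because $\pi Q = \pi$ as $Q$ is $\pi$-stationary, being in $\mathcal{L}(\pi)$), the convex combination is also $\pi$-stationary. For the Dobrushin part I further need $\overline{P}_\alpha(Q) \in \mathcal{L}(\pi)$; here reversibility of $P$ gives $P^* = P$ and hence $(QPQ)^* = Q^*P^*Q^* = QPQ$, so $QPQ$ is $\pi$-reversible, and therefore so is $\overline{P}_\alpha(Q)$.

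Next I would invoke \eqref{eq:Pbarbar}, which says $\overline{\overline{P}_\alpha(Q)}(Q) = \overline{P}(Q)$ for every $\alpha$. Applying the first part of Proposition \ref{prop:converge2} with $\overline{P}_\alpha(Q)$ in place of $P$ yields
\[
D^{\pi}_{KL}(\overline{P}(Q) \| \Pi) \leq D^{\pi}_{KL}(\overline{P}_\alpha(Q) \| \Pi),
\]
and applying the second part (which is legitimate since $\overline{P}_\alpha(Q)$ is $\pi$-reversible) yields
\[
c_{KL}(\overline{P}(Q)) \leq c_{KL}(\overline{P}_\alpha(Q)).
\]

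Finally I would observe that the bounds are attained at $\alpha = 1/2$: since $P \in \mathcal{L}(\pi)$ gives $P = P^*$, we have $\overline{P}_{1/2}(Q) = \tfrac{1}{2}(P + QPQ) = \tfrac{1}{2}(P + QP^*Q) = \overline{P}(Q)$, so the infima over $\alpha \in [0,1]$ are both achieved and equal the right-hand sides in the statement. There is no real obstacle here; the only mildly subtle point is confirming $\pi$-reversibility of $\overline{P}_\alpha(Q)$ so that the Dobrushin coefficient clause of Proposition \ref{prop:converge2} is applicable.
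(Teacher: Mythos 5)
Your proposal is correct and follows exactly the paper's route: the paper proves the corollary by applying Proposition \ref{prop:converge2} with $\overline{P}_\alpha(Q)$ in place of $P$ and invoking the identity \eqref{eq:Pbarbar}, which is precisely your argument. The details you add (stationarity and $\pi$-reversibility of $\overline{P}_\alpha(Q)$, and that $\overline{P}_{1/2}(Q)=\overline{P}(Q)$ because $P=P^*$) are the correct justifications the paper leaves implicit.
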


The proof can readily be seen from Proposition \ref{prop:converge2} by replacing $P$ therein by $\overline{P}_\alpha(Q)$ and using \eqref{eq:Pbarbar}. It is interesting to note that $P$ and $QPQ$ are ``equivalent" in terms of their one-step contraction and Dobrushin coefficient, but randomly choosing to move according to $P$ or $QPQ$ at each step using a fair coin (i.e. using $(1/2)(P+QPQ)$) improves the performance.

\subsection{Comparisons of spectral parameters}

The aim of this subsection is to compare spectral parameters of various Markov chains. To this end, let us now fix a few notations.

For a matrix $M \in \mathcal{M}$, we write $\lambda(M)$ to be the set of eigenvalues of $M$ counted with multiplicities. For a self-adjoint $M$, we denote by $\lambda_1(M) \geq \lambda_2(M) \geq \ldots \lambda_{|\mathcal{X}|}(M)$ to be its eigenvalues arranged in non-increasing order. The right spectral gap $\gamma(P)$ of $P \in \mathcal{L}(\pi)$ is defined to be $\gamma(P) := 1 - \lambda_2(P)$, while the second largest eigenvalue in modulus $\mathrm{SLEM}(P)$ of $P$ is defined to be 
\begin{align}\label{def:SLEMP}
	\mathrm{SLEM}(P) := \max\{\lambda_2(P), |\lambda_{|\mathcal{X}|}(P)|\}.
\end{align}
We shall be interested in several hitting and mixing time parameters that are related to the spectrum of ergodic $P \in \mathcal{L}(\pi)$. Let $\tau_A = \tau_A(P) := \inf\{n \in \mathbb{N};~ X_n \in A\}$ be the first hitting time of the set $A$ of the Markov chain $(X_n)_{n\in \mathbb{N}}$ associated with $P$, where the usual convention of $\inf \emptyset := \infty$ applies. We write $\tau_x := \tau_{\{x\}}$ for $x \in \mathcal{X}$. The average hitting time $t_{av}(P)$ of $P$ is defined to be
\begin{align*}
	t_{av}(P) := \sum_{x,y} \pi(x)\pi(y) \mathbb{E}_x(\tau_y).
\end{align*}
The eigentime identity \cite{AF02} relates $t_{av}(P)$ to the spectrum of $P$ via
\begin{align*}
	t_{av}(P) = \sum_{i=2}^{|\mathcal{X}|} \dfrac{1}{1-\lambda_i(P)}.
\end{align*}
The relaxation time of $P$ is given by 
$$t_{rel}(P) := \dfrac{1}{\gamma(P)}.$$

Under the assumptions that $P \in \mathcal{L}(\pi)$ and $Q \in \mathcal{I}(\pi) \cap \mathcal{L}$ is an isometric involution transition matrix, $QPQ$ is a similarity transformation of $P$ and hence various spectral parameters between these two coincide.

\begin{proposition}\label{prop:similar}
	Let $P \in \mathcal{L}(\pi)$ and $Q\in \mathcal{I}(\pi) \cap \mathcal{L}$ be an isometric involution transition matrix. We have 
	$$\lambda(QPQ) = \lambda(P).$$
	Consequently, this leads to
	\begin{align*}
		t_{av}(QPQ) = t_{av}(P), \quad t_{rel}(QPQ) = t_{rel}(P).
	\end{align*}
\end{proposition}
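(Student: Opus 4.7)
The proof plan hinges on the elementary observation that $Q$ is its own inverse, so $QPQ$ is in fact a similarity transform of $P$. Concretely, since $Q \in \mathcal{I}(\pi) \cap \mathcal{L}$ satisfies $Q^2 = I$, we have $Q^{-1} = Q$, and therefore $QPQ = QPQ^{-1}$. Similar matrices share the same characteristic polynomial, hence $\lambda(QPQ) = \lambda(P)$ as multisets. This gives the first claim of the proposition directly.

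From there the two consequences for hitting and relaxation times follow by plugging into the relevant spectral formulas. First I would verify that $QPQ$ is itself $\pi$-reversible, which is a brief check: $(QPQ)^* = Q^* P^* Q^* = QPQ$ using $Q^* = Q$ and $P^* = P$, and $\pi QPQ = \pi$ using $\pi Q = \pi$ (a consequence of $Q \in \mathcal{L}(\pi)$). In particular, $QPQ \in \mathcal{L}(\pi)$, so its eigenvalues are real and can be ordered in non-increasing fashion. By the similarity already established, $\lambda_i(QPQ) = \lambda_i(P)$ for every $i$, whence
\begin{align*}
    \gamma(QPQ) = 1 - \lambda_2(QPQ) = 1 - \lambda_2(P) = \gamma(P),
\end{align*}
which immediately yields $t_{rel}(QPQ) = t_{rel}(P)$.

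For the average hitting time I would invoke the eigentime identity cited earlier (valid since $QPQ \in \mathcal{L}(\pi)$ is ergodic whenever $P$ is, which is inherited via the spectrum):
\begin{align*}
    t_{av}(QPQ) = \sum_{i=2}^{|\mathcal{X}|} \frac{1}{1-\lambda_i(QPQ)} = \sum_{i=2}^{|\mathcal{X}|} \frac{1}{1-\lambda_i(P)} = t_{av}(P).
\end{align*}

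There is no real obstacle here; the content of the proposition is essentially the similarity observation together with a reminder that the relevant scalar summaries depend only on the eigenvalues. The only subtlety worth flagging in the write-up is the brief verification that $QPQ$ sits in $\mathcal{L}(\pi)$ so that the eigentime identity and the standard ordering of eigenvalues actually apply.
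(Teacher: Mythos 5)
Your proposal is correct and follows essentially the same route the paper takes: the paper's justification is the one-line observation preceding the proposition that $QPQ = QPQ^{-1}$ is a similarity transformation of $P$, so the spectrum is preserved and the eigentime identity and spectral-gap formula give the two consequences. Your additional checks (that $QPQ \in \mathcal{L}(\pi)$ and that ergodicity transfers) are correct and are the right details to flag, even though the paper leaves them implicit.
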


In the next results, we compare the eigenvalues of $\alpha P + (1-\alpha)QPQ$ for $\alpha \in (0,1)$ with that of $P$ (or $QPQ$). A natural tool to utilize in this context is the Weyl's inequality.

\begin{proposition}\label{prop:spectralspeedup}
	Let $P \in \mathcal{L}(\pi)$ and $Q \in \mathcal{I}(\pi) \cap \mathcal{L}$ be an isometric involution transition matrix. Fix $\alpha \in (0,1)$ and recall that $\overline{P}_{\alpha}(Q)$ is introduced in \eqref{eq:PbaralphaQ}. Let $n := |\mathcal{X}|$. We have 
	\begin{itemize}
		\item $\lambda_2(\overline{P}_{\alpha}(Q)) \leq \lambda_2(P)$, where the equality holds if and only if there exists a common eigenvector $f$ such that $\overline{P}_{\alpha}(Q)f = \lambda_2(\overline{P}_{\alpha}(Q))f$, $Pf = \lambda_2(P) f$ and $QPQf = \lambda_2(QPQ) f$.
		
		\item $\lambda_n(P) \leq \lambda_n(\overline{P}_{\alpha}(Q))$, where the equality holds if and only if there exists a common eigenvector $g$ such that $\overline{P}_{\alpha}(Q)g = \lambda_n(\overline{P}_{\alpha}(Q))g$, $Pg = \lambda_n(P) g$ and $QPQg = \lambda_n(QPQ) g$.
	\end{itemize}
	Consequently, this leads to
	\begin{align*}
		\mathrm{SLEM}(\overline{P}_\alpha(Q)) \leq \mathrm{SLEM}(P).
	\end{align*}
	If $P$ (and hence $QPQ$) is further assumed to be positive-semi-definite, then
	\begin{align*}
		\max\{\alpha,1-\alpha\} \lambda_2(P) &\leq \lambda_2(\overline{P}_{\alpha}(Q)) \leq \lambda_2(P), \\
		\max\{\alpha,1-\alpha\} \mathrm{SLEM}(P) &\leq \mathrm{SLEM}(\overline{P}_\alpha(Q)) \leq \mathrm{SLEM}(P).
	\end{align*}
\end{proposition}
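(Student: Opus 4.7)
The plan is to observe that $\overline{P}_\alpha(Q)$ is $\ell^2(\pi)$-self-adjoint, since $P^*=P$, $Q^*=Q$, and hence $(QPQ)^*=QPQ$, so the Courant--Fischer min-max characterization applies to all its eigenvalues. The key structural fact I would use repeatedly is that $Q$ is an $\ell^2(\pi)$-isometry fixing constants (because $Q\mathbf{1}=\mathbf{1}$ as $Q$ is stochastic), so it preserves both $\|\cdot\|_\pi$ and the subspace $\ell^2_0(\pi)$. This lets me split the Rayleigh quotient of $\overline{P}_\alpha(Q)$ at $f$ into $\alpha$ times that of $P$ at $f$ plus $(1-\alpha)$ times that of $P$ at $Qf$, where both arguments remain in the admissible subspace.

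For the upper bound, I would start from
\begin{align*}
\lambda_2(\overline{P}_\alpha(Q)) = \sup_{0\neq f \in \ell^2_0(\pi)} \frac{\langle \overline{P}_\alpha(Q)f, f\rangle_\pi}{\|f\|_\pi^2}
\end{align*}
and expand the numerator via $Q^*=Q$ and $Q^2=I$ as $\alpha \langle Pf, f\rangle_\pi + (1-\alpha)\langle P(Qf), Qf\rangle_\pi$. Since $\|Qf\|_\pi=\|f\|_\pi$ and $Qf\in \ell^2_0(\pi)$, both Rayleigh quotients are bounded above by $\lambda_2(P)$, and the convex combination gives $\lambda_2(\overline{P}_\alpha(Q))\leq \lambda_2(P)$. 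Equality forces both quotients to attain $\lambda_2(P)$, which for self-adjoint $P$ means $Pf=\lambda_2(P)f$ and $P(Qf)=\lambda_2(P)Qf$; applying $Q$ to the second identity and invoking $Q^2=I$ together with $\lambda_2(QPQ)=\lambda_2(P)$ (Proposition \ref{prop:similar}) gives the three stated eigenequations. The converse is a direct calculation: given such an $f$, one checks $\overline{P}_\alpha(Q)f=\lambda_2(P)f$ with $f\in\ell^2_0(\pi)$, forcing $\lambda_2(\overline{P}_\alpha(Q))\geq \lambda_2(P)$ and hence equality.

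The lower bound $\lambda_n(P)\leq \lambda_n(\overline{P}_\alpha(Q))$ comes from the same decomposition applied to the min-max characterization on all of $\ell^2(\pi)$, with an entirely parallel equality characterization. For the SLEM bound, I would case-split on the sign of $\lambda_n(\overline{P}_\alpha(Q))$: if non-negative, $|\lambda_n(\overline{P}_\alpha(Q))|\leq \lambda_2(\overline{P}_\alpha(Q))\leq \lambda_2(P)\leq \mathrm{SLEM}(P)$; if negative, $|\lambda_n(\overline{P}_\alpha(Q))|=-\lambda_n(\overline{P}_\alpha(Q))\leq -\lambda_n(P)=|\lambda_n(P)|\leq \mathrm{SLEM}(P)$, and combine with the already-established $\lambda_2$ estimate.

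For the PSD refinement, first note that $QPQ$ is PSD (since $\langle QPQf,f\rangle_\pi=\langle P(Qf),Qf\rangle_\pi\geq 0$), hence so are $\alpha P$, $(1-\alpha)QPQ$, and their sum $\overline{P}_\alpha(Q)$, which means $\mathrm{SLEM}=\lambda_2$ for both $P$ and $\overline{P}_\alpha(Q)$. Weyl's inequality in the form ``$\lambda_k(A+B)\geq \lambda_k(A)$ when $B$ is PSD'' applied to $\overline{P}_\alpha(Q)=\alpha P+[(1-\alpha)QPQ]$, and then again with the roles of $P$ and $QPQ$ swapped, yields $\lambda_2(\overline{P}_\alpha(Q))\geq \alpha\lambda_2(P)$ and $\lambda_2(\overline{P}_\alpha(Q))\geq (1-\alpha)\lambda_2(QPQ)=(1-\alpha)\lambda_2(P)$ by Proposition \ref{prop:similar}; taking the maximum gives the $\max\{\alpha,1-\alpha\}$ factor. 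I expect the main bookkeeping burden to be the equality characterization — confirming that the extremizer for $\overline{P}_\alpha(Q)$ is genuinely a simultaneous $\lambda_2$-eigenvector of $P$, $QPQ$, and $\overline{P}_\alpha(Q)$, and handling the mildly degenerate scenario where $\lambda_2(P)=1$.
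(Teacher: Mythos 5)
Your argument is correct, and it reaches the same conclusions by a more self-contained route than the paper. The paper's proof is a three-line application of Weyl's inequality cited as a black box: it writes $\overline{P}_\alpha(Q)-\Pi=\alpha(P-\Pi)+(1-\alpha)(QPQ-\Pi)$, bounds $\lambda_1$ of the sum by the sum of the $\lambda_1$'s (and dually for $\lambda_n$), invokes $\lambda(QPQ)=\lambda(P)$ from Proposition \ref{prop:similar}, and attributes the equality characterization to the equality case of Weyl's inequality; the PSD refinement is the same Weyl estimate $\lambda_2(A+B)\geq\lambda_2(A)+\lambda_n(B)$ that you use. You instead unpack Weyl's inequality via Courant--Fischer, and the specialization $\langle QPQf,f\rangle_\pi=\langle P(Qf),Qf\rangle_\pi$ together with the facts that $Q$ is a $\pi$-isometry preserving $\ell^2_0(\pi)$ turns the Rayleigh quotient of $\overline{P}_\alpha(Q)$ at $f$ into an explicit convex combination of Rayleigh quotients of $P$ at $f$ and at $Qf$. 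What this buys is a transparent derivation of the equality case (strict convex weights force both quotients to saturate, hence $f$ and $Qf$ are both $\lambda_2(P)$-eigenvectors of $P$, which is exactly the common-eigenvector condition), rather than an appeal to the equality case of a cited theorem; your subtraction of $\Pi$ is replaced by restriction to $\ell^2_0(\pi)$, which is the same device in variational language. Your case analysis for the SLEM bound and your observation that PSD makes $\mathrm{SLEM}=\lambda_2$ fill in steps the paper leaves implicit. One small point to keep in mind when writing it up: in the converse direction of the equality characterization, the hypotheses already give $\overline{P}_\alpha(Q)f=\lambda_2(\overline{P}_\alpha(Q))f$ and $\overline{P}_\alpha(Q)f=\lambda_2(P)f$ simultaneously, so the two scalars coincide immediately and no separate handling of the degenerate case $\lambda_2(P)=1$ is needed.
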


In view of Proposition \ref{prop:spectralspeedup}, it is advantageous to consider the family of samplers $(\overline{P}_\alpha(Q))_{\alpha \in [0,1]}$ over the original $P$. Within this family and in the positive-semi-definite case, we see that the speedup measured in terms of $\lambda_2$ is at most one half, and as such one may seek to find an optimal $Q$ that minimizes
$\lambda_2(\overline{P}_\alpha(Q))$ subject to the constraints $Q^* = Q$ and $Q^2 = I$. We shall discuss tuning strategies of $Q$ in Section \ref{sec:tuneQ}.

Another interesting consequence of Proposition \ref{prop:spectralspeedup} lies in the equality characterizations. Under what situation(s) are we guaranteed to have $\lambda_2(\overline{P}_\alpha(Q)) < \lambda_2(P)$? Suppose that $P$ is ergodic, $\pi$-reversible with distinct eigenvalues (such as birth-death processes), and hence both the algebraic and geometric multiplicity equal to $1$ for each eigenvalue of $P$. Suppose that there exists $x \neq y$ such that $\pi(x) = \pi(y)$. We define $\phi(x) := y, \phi(y) := x, \phi(z) := z$ for all $z \in \mathcal{X} \backslash \{x,y\}$. Define $Q_{\phi}(x,y) = \delta_{y = \phi(x)}$, the Dirac mass of the set $\{y = \phi(x)\}$. Then, a necessary condition for $\lambda_2(\overline{P}_\alpha(Q_\phi)) = \lambda_2(P)$ is that the common eigenvector $f$ satisfies $f(x) = \pm f(y)$. Thus, under these assumptions of $Q_\phi$, if the eigenvector of $P$ has distinct absolute values such that $|f(x)| \neq |f(y)|$ for all $x \neq y$, the Weyl's inequality is strict and hence $\lambda_2(\overline{P}_\alpha(Q_\phi)) < \lambda_2(P)$. Similar analysis can be done to give a sufficient condition for $\lambda_n(P) < \lambda_n(\overline{P}_{\alpha}(Q_\phi))$.

Analogous to Corollary \ref{cor:optimal1/2}, an interesting corollary of Proposition \ref{prop:spectralspeedup} is that the choice of $\alpha = 1/2$ is optimal within the family $(\overline{P}_\alpha(Q))_{\alpha \in [0,1]}$ in the sense that it minimizes SLEM and $\lambda_2$ when $P$ is $\pi$-reversible:

\begin{corollary}[Optimality of $\alpha = 1/2$]\label{cor:optimal1/22}
	Let $P \in \mathcal{L}(\pi)$ and $Q \in \mathcal{I}(\pi) \cap \mathcal{L}$ be an isometric involution transition matrix. We have
	\begin{align*}
		\min_{\alpha \in [0,1]} \mathrm{SLEM}(\overline{P}_\alpha(Q)) &= \mathrm{SLEM}(\overline{P}(Q)), \\
		\min_{\alpha \in [0,1]} \lambda_2(\overline{P}_\alpha(Q)) &= \lambda_2(\overline{P}(Q)).
	\end{align*}
\end{corollary}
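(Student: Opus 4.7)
The plan is to mimic the argument used for Corollary \ref{cor:optimal1/2}, simply replacing the roles of Proposition \ref{prop:converge2} and the divergence there with Proposition \ref{prop:spectralspeedup} and the relevant spectral quantities. The core idea is that applying the projection $\overline{\cdot\,}(Q)$ twice collapses to a single projection, so that $\overline{P}(Q)$ can be realized as the $1/2$-projection of \emph{any} $\overline{P}_\alpha(Q)$, and Proposition \ref{prop:spectralspeedup} then shows that this projection can only decrease $\lambda_2$ and $\mathrm{SLEM}$.

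First I would verify the preliminary hypothesis: for any $\alpha \in [0,1]$, the mixture $\overline{P}_\alpha(Q) = \alpha P + (1-\alpha) QPQ$ belongs to $\mathcal{L}(\pi)$. Since $P \in \mathcal{L}(\pi)$ gives $P^* = P$, and $Q^* = Q$ implies $(QPQ)^* = Q^*P^*Q^* = QPQ$, the matrix $\overline{P}_\alpha(Q)$ is a convex combination of $\pi$-reversible transition matrices and is therefore itself $\pi$-reversible. This is the input needed to invoke Proposition \ref{prop:spectralspeedup} with $\overline{P}_\alpha(Q)$ in place of $P$.

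Next I would apply Proposition \ref{prop:spectralspeedup} to $\overline{P}_\alpha(Q)$ at mixing parameter $1/2 \in (0,1)$, obtaining
\begin{align*}
\lambda_2\!\left(\overline{\overline{P}_\alpha(Q)}(Q)\right) \leq \lambda_2(\overline{P}_\alpha(Q)), \qquad \mathrm{SLEM}\!\left(\overline{\overline{P}_\alpha(Q)}(Q)\right) \leq \mathrm{SLEM}(\overline{P}_\alpha(Q)).
\end{align*}
By the idempotence identity \eqref{eq:Pbarbar}, the left-hand sides both equal $\lambda_2(\overline{P}(Q))$ and $\mathrm{SLEM}(\overline{P}(Q))$ respectively. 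Since $\alpha \in [0,1]$ was arbitrary, this yields
\begin{align*}
\lambda_2(\overline{P}(Q)) \leq \lambda_2(\overline{P}_\alpha(Q)) \quad \text{and} \quad \mathrm{SLEM}(\overline{P}(Q)) \leq \mathrm{SLEM}(\overline{P}_\alpha(Q)) \qquad \forall\, \alpha \in [0,1],
\end{align*}
and equality is attained at $\alpha = 1/2$ because $\overline{P}_{1/2}(Q) = \overline{P}(Q)$ when $P \in \mathcal{L}(\pi)$. Taking the minimum over $\alpha$ gives both claimed identities.

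I do not anticipate any real obstacle; the proof is essentially a bookkeeping exercise that leverages the self-projection property \eqref{eq:Pbarbar} together with Proposition \ref{prop:spectralspeedup}. The only subtlety worth flagging is ensuring that Proposition \ref{prop:spectralspeedup} is applicable (i.e.\ reversibility of $\overline{P}_\alpha(Q)$) and that $\alpha = 1/2$ lies in the open interval $(0,1)$ required by that proposition, so that the $1/2$-mixing step where we invoke it is legitimate.
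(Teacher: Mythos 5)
Your proposal is correct and follows exactly the route the paper indicates: apply Proposition \ref{prop:spectralspeedup} with $P$ replaced by $\overline{P}_\alpha(Q)$ and use the idempotence identity \eqref{eq:Pbarbar} to identify the doubly projected matrix with $\overline{P}(Q)$. Your added check that $\overline{P}_\alpha(Q)$ is $\pi$-reversible is a sensible verification of the hypothesis that the paper leaves implicit.
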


The proof can readily be seen from Proposition \ref{prop:spectralspeedup} by replacing $P$ therein by $\overline{P}_\alpha(Q)$ as well as \eqref{eq:Pbarbar}.

\subsection{Comparisons of asymptotic variances}

In addition to entropic and spectral parameters, another commonly used parameter to assess the convergence of Markov chain samplers is asymptotic variance. In this subsection, we compare the asymptotic variances of various Markov chains. We first fix a few notations.

For an ergodic $P \in \mathcal{S}(\pi)$, its fundamental matrix $Z(P)$, see for example \cite[Chapter $6$]{B99}, is defined to be
\begin{align*}
	Z(P) := (I - (P - \Pi))^{-1},
\end{align*}
where we recall that $\Pi$ is the matrix where each row equals to $\pi$. Note that the above inverse always exists for ergodic $P$. For $Q \in \mathcal{I}(\pi) \cap \mathcal{L}$, we see that
\begin{align*}
	Z(QPQ) = QZ(P)Q.
\end{align*}

Let $(X_n)_{n \geq 0}$ be the Markov chain with ergodic transition matrix $P$. Its asymptotic variance of $f \in \ell^2_0(\pi)$ is, for any initial distribution $\mu$,
\begin{align*}
	\lim_{n \to \infty} \dfrac{1}{n} \mathrm{Var}_\mu\left(\sum_{i=1}^n f(X_i)\right) = 2 \langle f,Z(P)f \rangle_\pi - \langle f,f \rangle_\pi =: v(f,P). 
\end{align*}
For a proof of the above expression one can consult \cite[Theorem $6.5$]{B99}. From this definition we readily check that 
\begin{align}\label{eq:varfvarqf}
	v(f,P) = v(Qf, QPQ).
\end{align} 
A useful variational characterization of asymptotic variance for $P \in \mathcal{L}(\pi)$ \cite{S18} is given by
\begin{align}\label{eq:asympvvar}
	v(f,P) = \sup_{g \in \ell^2_0(\pi)} 4 \langle f,g \rangle_\pi - 2 \langle (I-P) g, g \rangle_\pi - \langle f,f \rangle_\pi.
\end{align}
The worst-case asymptotic variance, studied for example in \cite{FSHS93}, is 
\begin{align}\label{eq:worstasympvar}
	V(P) := \sup_{f \in \ell^2_0(\pi), \norm{f}_{\pi} = 1} v(f,P) = \dfrac{1 + \lambda_2(P)}{1-\lambda_2(P)},
\end{align}
while the average-case asymptotic variance, investigated in \cite{CCHP12}, is
\begin{align}\label{eq:averageasympvar}
	\overline{v}(P) := \int_{f \in \ell^2_0(\pi), \norm{f}_{\pi} = 1} v(f,P) d S(f),
\end{align}
where $dS(f)$ is the uniform measure on the normalized surface area.

Our first proposition compares the asymptotic variances between $\overline{P}_\alpha(Q)$ and $P$. 

\begin{proposition}\label{prop:compareasympv}
	Let $P \in \mathcal{L}(\pi)$ be ergodic and $Q \in \mathcal{I}(\pi) \cap \mathcal{L}$ be an isometric involution transition matrix. For $\alpha \in [0,1]$, recall that $\overline{P}_{\alpha}(Q)$ is introduced in \eqref{eq:PbaralphaQ}. For any $f \in \ell^2_0(\pi)$, we have
	\begin{align*}
		v(f,\overline{P}_\alpha(Q)) \leq \alpha v(f,P) + (1-\alpha) v(Qf,P).
	\end{align*}
	In particular, if $f$ satisfies $Qf = \pm f$, it leads to
	\begin{align*}
		v(f,\overline{P}_\alpha(Q)) \leq v(f,P),
	\end{align*}
	and hence
	\begin{align*}
		\min_{\alpha \in [0,1]} v(f,\overline{P}_\alpha(Q)) &= v(f,\overline{P}(Q)).
	\end{align*}
\end{proposition}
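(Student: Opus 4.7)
The plan is to leverage the variational characterization \eqref{eq:asympvvar} of asymptotic variance for reversible kernels, together with the identity \eqref{eq:varfvarqf}, to deduce both claims via convexity of $P \mapsto v(f, P)$ on $\mathcal{L}(\pi)$.

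First I would verify that \eqref{eq:asympvvar} legitimately applies to $\overline{P}_\alpha(Q)$. Since $P^* = P$ and $Q^* = Q$, one has $(QPQ)^* = QPQ$, so $QPQ \in \mathcal{L}(\pi)$, and hence so does the convex combination $\overline{P}_\alpha(Q) = \alpha P + (1-\alpha) QPQ$. Next, for each fixed $g \in \ell^2_0(\pi)$ the map $P \mapsto 4\langle f, g\rangle_\pi - 2\langle (I-P)g, g\rangle_\pi - \langle f, f\rangle_\pi$ is affine in $P$, so $v(f, \cdot)$ is the supremum of affine functions and therefore convex on $\mathcal{L}(\pi)$. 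Applying this convexity to $\overline{P}_\alpha(Q)$ yields
\[
v(f, \overline{P}_\alpha(Q)) \leq \alpha\, v(f, P) + (1-\alpha)\, v(f, QPQ),
\]
and invoking \eqref{eq:varfvarqf} together with $Q^2 = I$ gives $v(f, QPQ) = v(Qf, Q(QPQ)Q) = v(Qf, P)$, which establishes the first inequality.

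For the second claim, the assumption $Qf = \pm f$ combined with the quadratic dependence of $v(\cdot, P)$ on $f$ (or equivalently a second application of \eqref{eq:varfvarqf}) gives $v(Qf, P) = v(f, P)$, so $v(f, \overline{P}_\alpha(Q)) \leq v(f, P)$. To identify $\alpha = 1/2$ as the minimizer, I would combine convexity of $\alpha \mapsto v(f, \overline{P}_\alpha(Q))$, inherited from the convexity of $v(f, \cdot)$ and the linearity of $\alpha \mapsto \overline{P}_\alpha(Q)$, with the symmetry
\[
v(f, \overline{P}_\alpha(Q)) = v(Qf, Q\overline{P}_\alpha(Q)Q) = v(Qf, \overline{P}_{1-\alpha}(Q)) = v(f, \overline{P}_{1-\alpha}(Q)),
\]
where the middle equality uses $Q\overline{P}_\alpha(Q)Q = \overline{P}_{1-\alpha}(Q)$ and the last uses $Qf = \pm f$. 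A convex function on $[0,1]$ that is symmetric about $1/2$ attains its minimum at $1/2$, giving $\min_{\alpha \in [0,1]} v(f, \overline{P}_\alpha(Q)) = v(f, \overline{P}(Q))$. There is no real obstacle here beyond recognizing that $P$ enters affinely inside the supremum in \eqref{eq:asympvvar}, which yields the convexity that drives the entire argument.
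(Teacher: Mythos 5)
Your proof is correct and the main inequality is established exactly as in the paper: the quantity inside the supremum in \eqref{eq:asympvvar} is affine in the kernel, so $v(f,\cdot)$ is convex on $\mathcal{L}(\pi)$, and \eqref{eq:varfvarqf} with $Q^2=I$ converts $v(f,QPQ)$ into $v(Qf,P)$. The only place you diverge is the final step identifying $\alpha=1/2$ as the minimizer: the paper bootstraps by applying the already-proved inequality to $\overline{P}_\alpha(Q)$ in place of $P$ and invoking the idempotence identity \eqref{eq:Pbarbar}, $\overline{\overline{P}_\alpha(Q)}(Q)=\overline{P}(Q)$, whereas you use convexity of $\alpha\mapsto v(f,\overline{P}_\alpha(Q))$ together with the symmetry $v(f,\overline{P}_\alpha(Q))=v(f,\overline{P}_{1-\alpha}(Q))$ (valid under $Qf=\pm f$) to place the minimum at $\alpha=1/2$. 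Both closing arguments are sound and of comparable length; the paper's version has the mild advantage of reusing machinery ($\eqref{eq:Pbarbar}$) that recurs in Corollaries \ref{cor:optimal1/2} and \ref{cor:optimal1/22}, while yours makes the symmetry about $\alpha=1/2$ explicit and is self-contained.
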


Our second result compares the worst-case and average-case asymptotic variance between $\overline{P}_\alpha(Q)$ and $P$, and demonstrates the optimality of $\alpha = 1/2$.

\begin{proposition}\label{prop:asympvarspeedup}
	Let $P \in \mathcal{L}(\pi)$ and $Q \in \mathcal{I}(\pi) \cap \mathcal{L}$ be an isometric involution transition matrix. Fix $\alpha \in (0,1)$ and recall that $\overline{P}_{\alpha}(Q)$ is introduced in \eqref{eq:PbaralphaQ}. We have 
	\begin{itemize}
		\item(worst-case asymptotic variance) \begin{align*}
			V(\overline{P}_\alpha(Q)) \leq V(P), 
		\end{align*}
		where the equality holds if and only if $\lambda_2(\overline{P}_\alpha(Q)) = \lambda_2(P)$ if and only if there exists a common eigenvector $g$ such that $\overline{P}_{\alpha}(Q)g = \lambda_2(\overline{P}_{\alpha}(Q))g$, $Pg = \lambda_2(P) g$ and $QPQg = \lambda_2(QPQ) g$.
		
		If $P$ is further assumed to be positive-semi-definite, then
		\begin{align*}
				\max\{\alpha,1-\alpha\} V(P) &\leq V(\overline{P}_\alpha(Q)) \leq V(P).
		\end{align*}
		
		\item(average-case asymptotic variance) \begin{align*}
			\overline{v}(\overline{P}_\alpha(Q)) \leq \overline{v}(P).
		\end{align*}
	\end{itemize}
	Consequently, this leads to
	\begin{align*}
		\min_{\alpha \in [0,1]} V(\overline{P}_\alpha(Q)) &= V(\overline{P}(Q)), \\
		\min_{\alpha \in [0,1]} \overline{v}(\overline{P}_\alpha(Q)) &= \overline{v}(\overline{P}(Q)).
	\end{align*}
\end{proposition}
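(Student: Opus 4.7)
The plan is to (i) deduce the worst-case statements from the spectral formula $V(P) = (1+\lambda_2(P))/(1-\lambda_2(P))$ in \eqref{eq:worstasympvar} combined with the spectral comparisons in Proposition \ref{prop:spectralspeedup}, (ii) deduce the average-case statement by integrating the pointwise bound in Proposition \ref{prop:compareasympv} against the uniform spherical measure, and (iii) obtain optimality of $\alpha = 1/2$ by a bootstrap via the identity \eqref{eq:Pbarbar}.

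For the worst-case bound, since the map $t \mapsto (1+t)/(1-t)$ is strictly increasing on $(-1,1)$, $V$ is a monotone function of $\lambda_2$, so the inequality $\lambda_2(\overline{P}_\alpha(Q)) \leq \lambda_2(P)$ of Proposition \ref{prop:spectralspeedup} gives $V(\overline{P}_\alpha(Q)) \leq V(P)$, with equality iff $\lambda_2(\overline{P}_\alpha(Q)) = \lambda_2(P)$; the common-eigenvector characterization is then inherited from Proposition \ref{prop:spectralspeedup}. In the positive semi-definite case I would feed the matching spectral lower bound $\lambda_2(\overline{P}_\alpha(Q)) \geq \max\{\alpha,1-\alpha\}\lambda_2(P)$ through the same monotone formula and apply a direct algebraic manipulation to convert it into the multiplicative lower bound $\max\{\alpha,1-\alpha\}V(P) \leq V(\overline{P}_\alpha(Q))$.

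For the average-case inequality, I would integrate Proposition \ref{prop:compareasympv},
\[
v(f, \overline{P}_\alpha(Q)) \leq \alpha v(f,P) + (1-\alpha) v(Qf,P),
\]
against the uniform surface measure $dS$ on the unit sphere of $\ell^2_0(\pi)$. The key observation is that $Q$ is an isometric involution of $\ell^2(\pi)$ that stabilizes $\ell^2_0(\pi)$ (because $\pi(x)=\pi(\psi(x))$ forces $\pi(Qf)=\pi(f)$), so it maps the unit sphere bijectively onto itself and $dS$ is $Q$-invariant. The change of variables $f\mapsto Qf$ then yields $\int v(Qf,P)\,dS(f) = \overline{v}(P)$, and hence $\overline{v}(\overline{P}_\alpha(Q)) \leq \alpha\overline{v}(P)+(1-\alpha)\overline{v}(P) = \overline{v}(P)$.

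Finally, optimality of $\alpha = 1/2$ will follow by bootstrap: $\overline{P}_\alpha(Q)$ is itself $\pi$-reversible, since it is a convex combination of the reversible matrices $P$ and $QPQ$ (the latter because $(QPQ)^* = QP^*Q = QPQ$), so the first two parts apply with $P$ replaced by $\overline{P}_\alpha(Q)$. Via \eqref{eq:Pbarbar} this substitution returns $\overline{P}(Q)$ on the left-hand side, producing $V(\overline{P}(Q)) \leq V(\overline{P}_\alpha(Q))$ and $\overline{v}(\overline{P}(Q)) \leq \overline{v}(\overline{P}_\alpha(Q))$ for every $\alpha\in[0,1]$, which are exactly the two claimed minimization identities. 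The step I expect to be most technical is the positive semi-definite lower bound on $V$: naively chaining the spectral lower bound is loose when $\lambda_2(P)$ is close to $1$, so it may require a direct variational argument via \eqref{eq:asympvvar} rather than a purely algebraic conversion of the eigenvalue bound.
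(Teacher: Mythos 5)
Your proposal matches the paper's proof in overall structure: the worst-case upper bound is obtained exactly as in the paper, by composing the strictly increasing map $t \mapsto (1+t)/(1-t)$ with the eigenvalue comparison of Proposition \ref{prop:spectralspeedup} (the equality characterization being inherited from the Weyl equality condition there), and the optimality of $\alpha = 1/2$ is obtained by the same bootstrap through \eqref{eq:Pbarbar}, noting that $\overline{P}_\alpha(Q)$ is again $\pi$-reversible. For the average-case inequality you take a genuinely different, and equally valid, route: the paper evaluates $\int v(Qf,P)\,dS(f)$ by writing $v(Qf,P) = v(f,QPQ)$ and invoking the closed form $\overline{v}(M) = \frac{2}{|\mathcal{X}|-1}\mathrm{Tr}(Z(M)) - 1$ of \cite{CCHP12} together with $\lambda(QPQ) = \lambda(P)$, whereas you integrate the pointwise bound of Proposition \ref{prop:compareasympv} and perform the change of variables $f \mapsto Qf$, using that $Q$ is a $\pi$-isometry preserving $\ell^2_0(\pi)$ (since $\pi(Qf) = \pi(f)$) and hence preserving the uniform surface measure. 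Your argument is more self-contained; the paper's sidesteps any discussion of measure invariance by reducing to a trace identity.

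The one step you do not complete is the positive-semi-definite lower bound $\max\{\alpha,1-\alpha\}\, V(P) \leq V(\overline{P}_\alpha(Q))$, which you defer with the remark that naive chaining of the eigenvalue bound may be too loose near $\lambda_2(P) = 1$. That suspicion is well founded, and you should be aware that the paper's own proof of this sub-claim is precisely the naive chaining: it combines $\lambda_2(\overline{P}_\alpha(Q)) \geq a\lambda_2(P)$ (with $a = \max\{\alpha,1-\alpha\}$) with the asserted elementary inequality $a\frac{1+c}{1-c} \leq \frac{1+ac}{1-ac}$ for $a \in (0,1)$, $c > 0$. That inequality fails for $c$ close to $1$: at $a = 1/2$, $c = 0.9$ the left side is $9.5$ while the right side is $1.45/0.55 \approx 2.64$, and indeed the left side diverges as $c \uparrow 1$ while the right side stays bounded by $\frac{1+a}{1-a}$. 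So the spectral lower bound alone cannot deliver the multiplicative bound on $V$, and a correct treatment of this item would require a different argument (for instance via the variational formula \eqref{eq:asympvvar}, as you suggest), or a re-examination of whether the claim holds in this generality. Apart from this sub-claim, which is equally problematic in the paper's own proof, your proposal is sound.
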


When will there be no improvement in the worst-case asymptotic variance, i.e. $V(\overline{P}_\alpha(Q)) = V(P)$? One interesting consequence of Proposition \ref{prop:asympvarspeedup} is that, there is no improvement if and only if $\lambda_2(P) = \lambda_2(\overline{P}_\alpha(Q))$ and by the Weyl's inequality if and only if there exists a common eigenvector.

\section{Alternating projections to combine $Q$s}\label{sec:altproj}

Let $m \in \mathbb{N}$ and suppose that we have a sequence of isometric involution matrices $Q_i \in \mathcal{I}(\pi) \cap \mathcal{L}$ for $i \in \llbracket 0,m-1 \rrbracket$. Is there a way to combine these $Q_i$ to further improve the convergence to equilibrium? 

One natural idea in this context is alternating projections. Specifically, given a $P \in \mathcal{L}(\pi)$, we first project it onto the space $\mathcal{L}(\pi,Q_0)$ to obtain $R_1 = R_1(Q_0,\ldots,Q_{m-1},P) := \overline{P}(Q_0)$. Second, we project $R_1$ onto the space $\mathcal{L}(\pi,Q_1)$ to obtain $R_2 = R_2(Q_0,\ldots,Q_{m-1},P) := \overline{R_1}(Q_1)$. Third, we project $R_2$ onto the space $\mathcal{L}(\pi,Q_2)$ to obtain $R_3 = R_3(Q_0,\ldots,Q_{m-1},P) := \overline{R_2}(Q_2)$. We proceed iteratively and the projection order is deterministic in a cycle in the order of $Q_0,\ldots, Q_{m-1}$. Precisely, for $n \in \mathbb{N}$, we define
\begin{align}\label{eq:Rndef}
	R_n = R_n(Q_0,\ldots,Q_{m-1},P) := \overline{R_{n-1}}(Q_{(n-1) \mod m})
\end{align}
with the initial condition $R_0 := P$.

We remark that, in the context of MCMC, alternating projections have appeared in the analysis of Gibbs samplers in \cite{DKSC10,Q24}.

The sequence of alternating projections $(R_i)_{i \in \mathbb{N}}$ yields a monotone sequence of mixing time parameters. This can readily be seen by using the monotone convergence theorem, together with recursive application of Proposition \ref{prop:converge2}, \ref{prop:spectralspeedup}, \ref{prop:asympvarspeedup} and noting a lower bound of zero on these quantities.

\begin{proposition}\label{prop:alterprojlimit}
	Let $P \in \mathcal{L}(\pi)$ and $Q_i \in \mathcal{I}(\pi) \cap \mathcal{L}$ for $i \in \llbracket 0,m-1 \rrbracket$ be a sequence of isometric involution transition matrices. Define $R_n$ as in \eqref{eq:Rndef}. The sequences $(D^\pi_{KL}(R_n \| \Pi))_{n \in \mathbb{N}}$, $(c_{KL}(R_n))_{n \in \mathbb{N}}$, $(\mathrm{SLEM}(R_n))_{n \in \mathbb{N}}$, $(V(R_n))_{n \in \mathbb{N}}$ and $(\overline{v}(R_n))_{n \in \mathbb{N}}$ are monotonically non-increasing in $n$ with limits
	\begin{align*}
		\lim_{n \to \infty} D^\pi_{KL}(R_n \| \Pi) &= \inf_{n \in \mathbb{N}} D^\pi_{KL}(R_n \| \Pi),\\
		\lim_{n \to \infty} c_{KL}(R_n) &= \inf_{n \in \mathbb{N}} c_{KL}(R_n),\\
		\lim_{n \to \infty} \mathrm{SLEM}(R_n) &= \inf_{n \in \mathbb{N}} \mathrm{SLEM}(R_n),\\
		\lim_{n \to \infty} V(R_n) &= \inf_{n \in \mathbb{N}} V(R_n),\\
		\lim_{n \to \infty} \overline{v}(R_n) &= \inf_{n \in \mathbb{N}} \overline{v}(R_n),
	\end{align*}
	where we recall $D^\pi_{KL}$ is the $\pi$-weighted KL divergence, $c_{KL}$ is the KL-divergence Dobrushin coefficient as in Definition \eqref{def:KLDobrushin}, $\mathrm{SLEM}$ as in \eqref{def:SLEMP}, while $V$ and $\overline{v}$ are respectively the worst-case \eqref{eq:worstasympvar} and average-case \eqref{eq:averageasympvar} asymptotic variance.
\end{proposition}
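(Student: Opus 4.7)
The plan is to combine an invariance property of the projection with a recursive application of the one-step comparison results established in Propositions~\ref{prop:converge2}, \ref{prop:spectralspeedup}, and \ref{prop:asympvarspeedup}. The only substantive point beyond bookkeeping is to verify that $\pi$-reversibility is preserved along the sequence $(R_n)$, so that the comparison results remain applicable at every step and the spectral quantities $\mathrm{SLEM}, V, \overline{v}$ are unambiguously defined on a real spectrum.

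First, I would show by induction that $R_n \in \mathcal{L}(\pi)$ for all $n \geq 0$. The base case is the hypothesis $R_0 = P \in \mathcal{L}(\pi)$. For the inductive step, if $R_{n-1}^{*} = R_{n-1}$ then by \eqref{def:overlineP},
\begin{align*}
	R_n = \tfrac{1}{2}\bigl(R_{n-1} + Q_{(n-1) \bmod m}\, R_{n-1}\, Q_{(n-1) \bmod m}\bigr),
\end{align*}
and taking $\ell^2(\pi)$-adjoints using $Q^{*} = Q$ immediately yields $R_n^{*} = R_n$. This also places $R_n$ in the $\alpha = 1/2$ instance of the family \eqref{eq:PbaralphaQ}, as noted right after that definition, which matters for applying the spectral and variance comparisons.

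Second, with $R_n \in \mathcal{L}(\pi)$ established for every $n$, applying Proposition~\ref{prop:converge2} with $(P, Q)$ replaced by $(R_{n-1}, Q_{(n-1) \bmod m})$ yields the one-step inequalities
\begin{align*}
	D^\pi_{KL}(R_n \| \Pi) \leq D^\pi_{KL}(R_{n-1} \| \Pi), \qquad c_{KL}(R_n) \leq c_{KL}(R_{n-1}).
\end{align*}
Similarly, invoking Proposition~\ref{prop:spectralspeedup} and Proposition~\ref{prop:asympvarspeedup} at $\alpha = 1/2$ gives the analogous one-step inequalities for $\mathrm{SLEM}$, $V$, and $\overline{v}$. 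Chaining these across $n$ produces five monotonically non-increasing sequences. Each is bounded below: trivially by $0$ for $D^\pi_{KL}$, $c_{KL}$, $\mathrm{SLEM}$, and $V$; for $\overline{v}$, by $-1$ using the variational formula \eqref{eq:asympvvar} specialized at $g=0$ and then integrating over the unit sphere. The standard result that every monotone bounded real sequence converges to its infimum then delivers the five limit identities simultaneously.

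The main (and essentially only) obstacle is the reversibility-invariance observation; without it, Propositions~\ref{prop:converge2}, \ref{prop:spectralspeedup}, and \ref{prop:asympvarspeedup} could be applied only at $n=1$, their input hypotheses would fail for subsequent iterates, and the cascading argument would break down. Everything downstream of this observation is a direct concatenation of previously proved one-step contractions.
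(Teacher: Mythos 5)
Your proposal is correct and follows essentially the same route as the paper, which proves this result in one line by recursively applying Propositions~\ref{prop:converge2}, \ref{prop:spectralspeedup} and \ref{prop:asympvarspeedup} and invoking monotone convergence of bounded sequences. Your explicit induction showing that $R_n \in \mathcal{L}(\pi)$ for all $n$ is a detail the paper leaves implicit, and it is a worthwhile addition since the hypotheses of those three propositions do require $\pi$-reversibility at each step.
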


Denote the intersections of $\mathcal{L}(\pi,Q_0), \ldots, \mathcal{L}(\pi,Q_{m-1})$ to be
\begin{align*}
	\mathcal{E} = \mathcal{E}(\pi,Q_0,\ldots,Q_{m-1}) := \bigcap_{k=0}^{m-1} \mathcal{L}(\pi,Q_k).
\end{align*}
Note that since $\Pi \in \mathcal{L}(\pi,Q_k)$ for all $k \in \llbracket 0,m-1 \rrbracket$, $\Pi \in \mathcal{E}$ and hence $\mathcal{E} \neq \emptyset$. Since $\mathcal{L}(\pi,Q_k)$ is a convex and compact set and intersections preserve convexity and compactness, $\mathcal{E}$ is also a convex and compact set. Let $R_\infty$ be an information projection of $P \in \mathcal{L}(\pi)$ onto $\mathcal{E}$, that is,
\begin{align*}
	R_\infty = R_\infty(Q_0,\ldots,Q_{m-1},P) := \argmin_{N \in \mathcal{E}} D^\pi_{KL}(P \| N).
\end{align*}
Observe that since for fixed $P$ the mapping $N \mapsto D^\pi_{KL}(P \| N)$ is convex (see e.g. \cite{M20}) and the above minimization is taken over a convex and compact set $\mathcal{E}$, a unique minimizer $R_\infty$ exists owing to the Pythagorean theorem (see e.g. \cite[Lemma $13.2.3$]{B17}).

We state a decomposition of the KL divergence and squared-Frobenius norm, and use it to show that the information projection of any $R_k$ onto $\mathcal{E}$ coincides and are equal to $R_\infty$:

\begin{proposition}\label{prop:usefuldecomp}
	Let $l,m,n \in \mathbb{N}\cup \{0\}$ with $l > n \geq 0$ and $m \geq 1$. Let $P \in \mathcal{L}(\pi)$ and $Q_i \in \mathcal{I}(\pi) \cap \mathcal{L}$ for $i \in \llbracket 0,m-1 \rrbracket$ be a sequence of isometric involution transition matrices. Define $R_n$ as in \eqref{eq:Rndef}. 
	\begin{itemize}
		\item For $E \in \mathcal{E}$, we have
		\begin{align*}
			D^\pi_{KL}(R_n \| E) 
			&= \sum_{j=n}^{l-1} D^\pi_{KL}(R_j \| R_{j+1}) + D^\pi_{KL}(R_{l} \| E).
		\end{align*}
	
		\item For $E \in \mathcal{E}$, we have
		\begin{align*}
			\norm{R_n - E}_F^2 
			&= \sum_{j=n}^{l-1} \norm{R_j - R_{j+1}}_F^2 + \norm{R_{l} - E}_F^2.
		\end{align*}
		
		\item  We have
		\begin{align*}
			R_\infty(Q_0,\ldots,Q_{m-1},P) = R_\infty(Q_0,\ldots,Q_{m-1},R_l).
		\end{align*}
	
		\item(Projection is trace-preserving)
		\begin{align*}
			\mathrm{Tr}(R_n) = \mathrm{Tr}(P).
		\end{align*}
	\end{itemize}
\end{proposition}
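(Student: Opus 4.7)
My plan is to handle the four parts sequentially: Parts 1 and 2 telescope a one-step Pythagorean identity across $R_n, R_{n+1}, \ldots, R_l$, Part 3 is an immediate corollary of Part 1, and Part 4 is an induction on the trace-preservation identity \eqref{eq:compare4}.

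For Part 1 I would iteratively apply Proposition \ref{prop:basic}. Since $E \in \mathcal{E} \subseteq \mathcal{L}(\pi, Q_{j \mod m})$ for every $j$, specializing that proposition with $P \leftarrow R_j$ and $L \leftarrow E$ (and using $R_{j+1} = \overline{R_j}(Q_{j \mod m})$) gives
$${}^{Q_{j \mod m}} D_{KL}(R_j \| E) = {}^{Q_{j \mod m}} D_{KL}(R_j \| R_{j+1}) + {}^{Q_{j \mod m}} D_{KL}(R_{j+1} \| E).$$
Since each $R_j \in \mathcal{S}(\pi)$, Proposition \ref{prop:deformKLorigKL} rewrites every deformed divergence as the ordinary $D^\pi_{KL}$; telescoping over $j = n, \ldots, l-1$ closes the identity.

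For Part 2, the subtlety is that Proposition \ref{prop:projectFrob} is stated for projection onto $\mathcal{M}(\pi, Q) = \{M : M = QMQ\}$, whereas $E \in \mathcal{E}$ only satisfies $E^* = Q_k E Q_k$. My remedy is a short induction showing each $R_n$ is $\pi$-reversible: $R_0 = P$ is reversible by hypothesis, and if $R_j = R_j^*$ then $R_{j+1} = \tfrac{1}{2}(R_j + Q_{j \mod m} R_j^* Q_{j \mod m}) = \tfrac{1}{2}(R_j + Q_{j \mod m} R_j Q_{j \mod m})$, which is manifestly self-adjoint and coincides with the Frobenius projection of $R_j$ onto $\mathcal{M}(\pi, Q_{j \mod m})$. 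Hence $\langle R_j - R_{j+1}, R_{j+1} \rangle_F = 0$ by Proposition \ref{prop:projectFrob}. The remaining cross-term $\langle R_j - R_{j+1}, E \rangle_F = \tfrac{1}{2}\mathrm{Tr}(R_j E) - \tfrac{1}{2}\mathrm{Tr}(Q_{j \mod m} R_j Q_{j \mod m} E)$ also vanishes: cyclicity of trace and $Q_{j \mod m} E Q_{j \mod m} = E^*$ turn the second trace into $\mathrm{Tr}(R_j E^*)$, and then the identity $\mathrm{Tr}(M) = \mathrm{Tr}(M^*)$ together with $R_j = R_j^*$ yields $\mathrm{Tr}(R_j E^*) = \mathrm{Tr}(R_j E)$. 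Combining these two orthogonalities gives the one-step Pythagoras $\norm{R_j - E}_F^2 = \norm{R_j - R_{j+1}}_F^2 + \norm{R_{j+1} - E}_F^2$, and telescoping finishes the proof.

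Part 3 follows by setting $n = 0$ in Part 1: $D^\pi_{KL}(P \| E)$ and $D^\pi_{KL}(R_l \| E)$ differ by the $E$-independent constant $\sum_{j=0}^{l-1} D^\pi_{KL}(R_j \| R_{j+1})$, so their minimizers over the convex compact set $\mathcal{E}$ coincide by uniqueness of the information projection. Part 4 is an induction on $n$ using \eqref{eq:compare4} of Proposition \ref{prop:converge} applied to $R_{j+1} = \overline{R_j}(Q_{j \mod m})$, yielding $\mathrm{Tr}(R_n) = \mathrm{Tr}(R_{n-1}) = \cdots = \mathrm{Tr}(P)$. I expect the main obstacle to be Part 2: reconciling the two conventions for $\overline{\cdot}(Q)$ (with and without the adjoint in the definition) and verifying by direct trace computation that Pythagoras extends from $\mathcal{M}(\pi, Q)$ to the larger $\mathcal{L}(\pi, Q)$ once $\pi$-reversibility of $R_j$ is in hand.
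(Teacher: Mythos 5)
Your proposal is correct, and on the first, third and fourth items it follows essentially the same route as the paper: telescope the Pythagorean identity of Proposition \ref{prop:basic} together with Proposition \ref{prop:deformKLorigKL}, read off the third item from the $E$-independence of the telescoped sum at $n=0$, and iterate \eqref{eq:compare4} for the trace. Where you genuinely diverge is the second item, and in a way that improves on the paper. The paper simply says ``repeatedly apply Proposition \ref{prop:projectFrob}'', but that proposition is stated for $N \in \mathcal{M}(\pi,Q) = \{M : M = QMQ\}$, whereas an element $E \in \mathcal{E}$ only satisfies $E^* = Q_k E Q_k$; these conditions are not equivalent for non-reversible $E$, and the proof of Proposition \ref{prop:projectFrob} genuinely uses $N = QNQ$. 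You correctly flag this mismatch and close it: the induction showing each $R_j$ is $\pi$-reversible (so that the two conventions for $\overline{\,\cdot\,}(Q)$ coincide along the sequence and $R_{j+1} \in \mathcal{M}(\pi,Q_{j \bmod m})$), followed by the direct trace computation $\langle R_j - R_{j+1}, E\rangle_F = \tfrac{1}{2}\mathrm{Tr}(R_j E) - \tfrac{1}{2}\mathrm{Tr}(R_j E^*) = 0$ using cyclicity, $Q_{j \bmod m} E Q_{j \bmod m} = E^*$, $\mathrm{Tr}(M^*)=\mathrm{Tr}(M)$ and $R_j = R_j^*$, is exactly the orthogonality needed to run the one-step Pythagoras against a merely $(\pi,Q)$-self-adjoint $E$. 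So your write-up proves the statement as actually stated, while the paper's one-line citation leaves this reconciliation implicit.
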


\begin{proof}
	Let $Q = Q_{n \mod m}$. For the first item, we repeatedly apply Proposition \ref{prop:deformKLorigKL} and the Pythagorean identity in Proposition \ref{prop:basic} to obtain
	\begin{align*}
		D^\pi_{KL}(R_n \| E) &= D^{Q}_{KL}(R_n \| E) \\
		&= D^Q_{KL}(R_n \| R_{n+1}) + D^Q_{KL}(R_{n+1} \| E) \\
		&= D^\pi_{KL}(R_n \| R_{n+1}) + D^\pi_{KL}(R_{n+1} \| E) \\
		&= D^\pi_{KL}(R_n \| R_{n+1}) + D^\pi_{KL}(R_{n+1} \| R_{n+2}) + D^\pi_{KL}(R_{n+2} \| E) \\
		&= \quad \vdots \\
		&= \sum_{j=n}^{l-1} D^\pi_{KL}(R_j \| R_{j+1}) + D^\pi_{KL}(R_{l} \| E).
	\end{align*}
	Now, we take $n = 0$ above and since $D^\pi_{KL}(R_0 \| E) \geq D^\pi_{KL}(R_0 \| R_\infty)$, we are led to $D^\pi_{KL}(R_l \| E) \geq D^\pi_{KL}(R_l \| R_\infty)$, and hence $R_\infty(Q_0,\ldots,Q_{m-1},P) = R_\infty(Q_0,\ldots,Q_{m-1},R_l)$, which proves the third item.
	
	For the second item, we repeatedly apply Proposition \ref{prop:projectFrob} to obtain
	\begin{align*}
		\norm{R_n - E}_F^2 
		&= \norm{R_n - R_{n+1}}_F^2 + \norm{R_{n+1} - E}_F^2 \\
		&= \norm{R_n - R_{n+1}}_F^2 + \norm{R_{n+1} - R_{n+2}}_F^2 + \norm{R_{n+2} - E}_F^2 \\
		&= \quad \vdots \\
		&= \sum_{j=n}^{l-1} \norm{R_j - R_{j+1}}_F^2 + \norm{R_{l} - E}_F^2.
	\end{align*}

	Finally, we apply Proposition \ref{prop:converge} repeatedly to yield $\mathrm{Tr}(R_n) = \mathrm{Tr}(P)$.
\end{proof}

Our main result shows that the limit of $R_n$ exists and is given by $R_\infty$.

\begin{theorem}\label{thm:alterprojmain}
	Let $m,n \in \mathbb{N}$. Let $P \in \mathcal{L}(\pi)$ and $Q_i \in \mathcal{I}(\pi) \cap \mathcal{L}$ for $i \in \llbracket 0,m-1 \rrbracket$ be a sequence of isometric involution transition matrices. Define $R_n$ as in \eqref{eq:Rndef}. The following limit exists (pointwise or in total variation):
	\begin{align*}
		\lim_{n \to \infty} R_n = R_\infty,
	\end{align*}
	and
	\begin{align*}
		R_\infty \in \mathcal{L}(\pi) \cap \mathcal{E}, \quad \mathrm{Tr}(R_\infty) = \mathrm{Tr}(P).
	\end{align*}
\end{theorem}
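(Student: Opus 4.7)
The plan is to establish convergence of $(R_n)$ via the squared-Frobenius decomposition of Proposition~\ref{prop:usefuldecomp}, and then identify the limit as $R_\infty$ through the analogous KL decomposition and the defining optimality of $R_\infty$.

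First I apply the Frobenius decomposition of Proposition~\ref{prop:usefuldecomp} with $n=0$ and $E=\Pi \in \mathcal{E}$:
$$\norm{P - \Pi}_F^2 = \sum_{j=0}^{l-1}\norm{R_j - R_{j+1}}_F^2 + \norm{R_l - \Pi}_F^2,$$
which on dropping the last term and sending $l\to\infty$ yields $\sum_{j=0}^{\infty}\norm{R_j - R_{j+1}}_F^2 < \infty$ and hence $\norm{R_j - R_{j+1}}_F \to 0$. Each $R_n$ lies in the compact set $\mathcal{S}(\pi)$, so there is at least one subsequential limit $S$. A simple induction shows that each $R_n$ is $\pi$-reversible: if $R \in \mathcal{L}(\pi)$ then using $R^* = R$ and $Q^* = Q$ one checks that $\overline{R}(Q) = \tfrac{1}{2}(R + QRQ)$ equals its own $\ell^2(\pi)$-adjoint. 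Closedness of $\mathcal{L}(\pi)$ then gives $S \in \mathcal{L}(\pi)$.

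Next I show $S \in \mathcal{E}$. Pick a subsequence $R_{n_j} \to S$ and pass to a further subsequence along which $n_j \bmod m$ is constant. For each $k \in \llbracket 0, m-1 \rrbracket$, choose $i_k \in \llbracket 1,m\rrbracket$ with $(n_j + i_k - 1) \bmod m = k$; since consecutive iterates differ by less and less, $R_{n_j + i_k} \to S$, and each $R_{n_j + i_k} \in \mathcal{L}(\pi, Q_k)$, a closed set, so $S \in \mathcal{L}(\pi, Q_k)$. This holds for every $k$, giving $S \in \mathcal{E}$. To upgrade to convergence of the full sequence, the Frobenius decomposition with $E = S$ gives
$$\norm{R_{n+1} - S}_F^2 = \norm{R_n - S}_F^2 - \norm{R_n - R_{n+1}}_F^2,$$
so $(\norm{R_n - S}_F)_n$ is monotone non-increasing and vanishes along the convergent subsequence; the whole sequence therefore converges to $S$ in Frobenius norm, equivalently pointwise and in total variation on this finite state space.

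Finally I identify $S = R_\infty$ via the KL decomposition of Proposition~\ref{prop:usefuldecomp} with $n=0$:
$$D^\pi_{KL}(P \| E) = \sum_{j=0}^{l-1} D^\pi_{KL}(R_j \| R_{j+1}) + D^\pi_{KL}(R_l \| E).$$
Specializing to $E=S$ and to $E=R_\infty$, and passing $l\to\infty$ (using continuity of the KL map at the limits, discussed below), yields
$$D^\pi_{KL}(P \| R_\infty) = D^\pi_{KL}(P \| S) + D^\pi_{KL}(S \| R_\infty).$$
Because $S \in \mathcal{E}$, the defining optimality of $R_\infty$ forces $D^\pi_{KL}(P \| R_\infty) \leq D^\pi_{KL}(P \| S)$, hence $D^\pi_{KL}(S \| R_\infty) \leq 0$, and non-negativity of KL gives $S = R_\infty$. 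The trace identity $\mathrm{Tr}(R_\infty) = \mathrm{Tr}(P)$ is then immediate from $\mathrm{Tr}(R_n) = \mathrm{Tr}(P)$ in Proposition~\ref{prop:usefuldecomp} and continuity of the trace.

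The main obstacle is the KL-continuity step, since $D^\pi_{KL}(R_l \| E)$ can equal $+\infty$ whenever $E$ has a zero entry where $R_l$ does not. I would circumvent this by first observing that $\mathrm{supp}(R_n)$ is monotone non-decreasing in $n$ (because $R_{n+1} = \tfrac{1}{2}(R_n + Q_k R_n Q_k)$ is a sum of non-negative matrices dominating $R_n$), and then either arguing that $\mathrm{supp}(S) = \mathrm{supp}(R_\infty)$ coincides with the stabilized support of $(R_n)$, or applying a perturbation in which $S$ and $R_\infty$ are replaced by $(1-\varepsilon)(\cdot) + \varepsilon\Pi \in \mathcal{E}$ (which has strictly positive entries), carrying out the argument there, and sending $\varepsilon \downarrow 0$.
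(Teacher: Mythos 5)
Your proof is correct and reaches the same destination by the same overall skeleton (square-summable increments from a Pythagorean decomposition, increments tending to zero, subsequential limits landing in $\mathcal{E}$ by closedness, identification with $R_\infty$ via its optimality), but two of your steps genuinely differ from the paper's. For the convergence of the full sequence, you use the squared-Frobenius decomposition to get Fej\'er monotonicity, $\norm{R_{n+1}-S}_F^2=\norm{R_n-S}_F^2-\norm{R_n-R_{n+1}}_F^2$ for $S\in\mathcal{E}$, so that $\norm{R_n-S}_F$ is non-increasing and vanishes along a subsequence; the paper instead works entirely in KL, invokes the Markov-chain Pinsker inequality to get $D^\pi_{TV}(R_l,R_{l+1})\to 0$, and concludes by showing every convergent subsequence has the same limit. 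Your monotonicity argument is arguably cleaner and avoids Pinsker altogether. For the identification, you derive the three-point identity $D^\pi_{KL}(P\|R_\infty)=D^\pi_{KL}(P\|S)+D^\pi_{KL}(S\|R_\infty)$ and use optimality of $R_\infty$, whereas the paper compares $D^\pi_{KL}(R_{n_k}\|R')\ge D^\pi_{KL}(R_{n_k}\|R_\infty)$ and passes $k\to\infty$; these are essentially equivalent, and both hinge on continuity of the KL divergence along the sequence, which is delicate precisely where you say it is (the limit may have zero entries on the stabilized support of the $R_n$). You are right to flag this: the paper's step $\lim_k D^\pi_{KL}(R_{n_k}\|R')=D^\pi_{KL}(R'\|R')=0$ silently assumes the same continuity, so your explicit workaround (running the argument with $(1-\varepsilon)(\cdot)+\varepsilon\Pi\in\mathcal{E}$, which has full support since $\Pi\in\mathcal{E}$ and $\mathcal{E}$ is convex, then letting $\varepsilon\downarrow 0$) is a point where your write-up is actually more careful than the original; it would be worth spelling out that this yields $D^\pi_{KL}(P\|R_\infty)=\sum_{j}D^\pi_{KL}(R_j\|R_{j+1})$ and hence $D^\pi_{KL}(R_l\|R_\infty)\to 0$, after which Pinsker closes the loop.
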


\begin{proof}
	The proof is inspired by that of \cite[Theorem $3.2$]{C75}. On a finite state space $\mathcal{X}$, it suffices to show that for every converging subsequence $(R_{n_k})_{k \geq 1}$ such that $R_{n_k} \to R^{\prime}$, we have $R^{\prime} = R_\infty$.
	
	For $M,N \in \mathcal{L}$ and $\pi \in \mathcal{P}(\mathcal{X})$, we define the $\pi$-weighted total variation distance between $M$ and $N$ to be
	\begin{align*}
		D^\pi_{TV}(M,N) := \sum_x \pi(x) \sum_y |M(x,y) - N(x,y)|.
	\end{align*}
	
	First, we show that $R^{\prime} \in \mathcal{E}$. Taking $n = 0$ in Proposition \ref{prop:usefuldecomp} and using Proposition \ref{prop:alterprojlimit} give us that
	\begin{align*}
		\lim_{l \to \infty} \sum_{j=0}^{l-1} D^\pi_{KL}(R_j \| R_{j+1}) = \lim_{l \to \infty} D^\pi_{KL}(P \| E) - D^\pi_{KL}(R_l \| E) = D^\pi_{KL}(P \| E) - \inf_{l \in \mathbb{N}} D^\pi_{KL}(R_l \| E) < \infty, 
	\end{align*}
	and hence $\lim_{l \to \infty} D^\pi_{KL}(R_l \| R_{l+1}) = 0$. By the Markov chain Pinsker's inequality \cite[Proposition $3.5$]{WC23}, we thus have $\lim_{l \to \infty} D^\pi_{TV}(R_l , R_{l+1}) = 0$. By the triangle inequality, the subsequences $(R_{n_k})_{k \geq 1}$, $(R_{n_k + 1})_{k \geq 1}$, $(R_{n_k+2})_{k \geq 1}$ up to $(R_{n_k + m})_{k \geq 1}$ all converge to $R^\prime$. Since the spaces $\mathcal{L}(\pi,Q_0), \ldots \mathcal{L}(\pi,Q_{m-1})$ are closed, $R^{\prime} \in \mathcal{E}$. It is also obvious to note that $R^{\prime} \in \mathcal{L}(\pi)$ since $P \in \mathcal{L}(\pi)$.
	
	By the definition of $R_\infty$ and Proposition \ref{prop:usefuldecomp}, we have
	\begin{align*}
		D^\pi_{KL}(R_{n_k} \| R^{\prime}) \geq D^\pi_{KL}(R_{n_k} \| R_\infty).
	\end{align*}
	Taking $k \to \infty$ yields
	\begin{align*}
		\lim_{k \to \infty} D^\pi_{KL}(R_{n_k} \| R_\infty) = D^\pi_{KL}(R^\prime \| R_\infty) = 0,
	\end{align*}
	and hence $R^\prime = R_\infty$. 
	
	Finally, to prove that $\mathrm{Tr}(R_\infty) = \mathrm{Tr}(P)$, we take the limit $n \to \infty$ in item 3 of Proposition \ref{prop:usefuldecomp}. Since $\mathcal{X}$ is finite and $R_n$ converges to $R_\infty$ pointwise, this completes the proof.
%
\end{proof}

$R_\infty$ can therefore be interpreted as an ``optimal" transition matrix that combines $Q_0,\ldots,Q_{m-1}$. The following Figure \ref{fig:altproj} visualizes the alternation projection procedure in the case of $m = 2$.

\begin{figure}[H]
	\centering
	\includegraphics[width=0.75\textwidth]{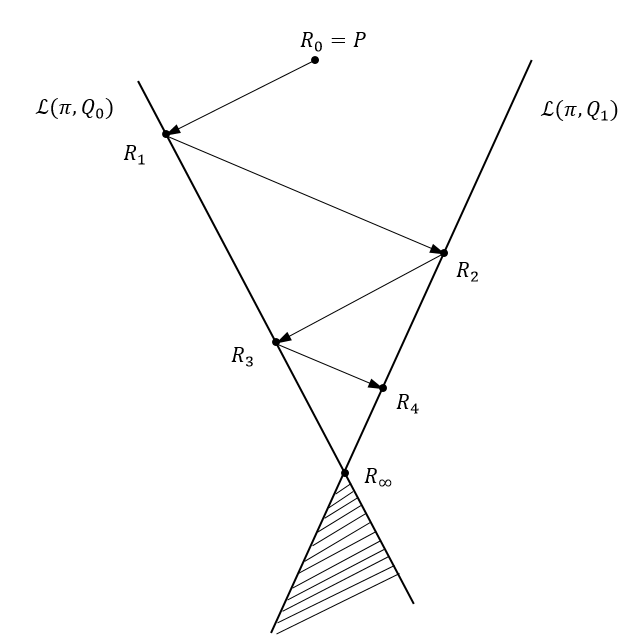}
	\caption{Improving the mixing of $P$ via alternating projections with $m = 2$. The intersection $\mathcal{E} = \cap_{i=0}^1 \mathcal{L}(\pi,Q_i)$ is the striped region in the bottom, and $R_\infty$ is the projection of $R_0 = P$ onto $\mathcal{E}$ under $D^\pi_{KL}$.}
	\label{fig:altproj}
\end{figure}

In using these alternating projections to improve mixing, the worst possible case that can happen is that $P = R_\infty$, that is, using these alternating projections have no effect on the original $P$. This happens if and only if $P \in \mathcal{E}$, that is, the original $P$ is $(\pi,Q_i)$-self-adjoint for all $i \in \llbracket 0,m-1 \rrbracket$.

On the other hand, an ideal scenario is that $R_\infty = \Pi$, and by Proposition \ref{prop:usefuldecomp} that happens if and only if
\begin{align*}
	D^\pi_{KL}(P \| \Pi) 
	&= \sum_{j=0}^{\infty} D^\pi_{KL}(R_j \| R_{j+1}).
\end{align*}
We shall investigate some other necessary conditions of $R_\infty = \Pi$ in Section \ref{sec:maxspeedlimit}.

A special case appears when $(Q_i)_{i=0}^{m-1}$ is pairwise commutative, that is, $Q_i Q_j = Q_j Q_i$ for all $i \neq j, i,j \in \llbracket 0,m-1 \rrbracket$. Define the product to be
\begin{align*}
	\mathbf{Q} = \mathbf{Q}(\pi,Q_0,\ldots,Q_{m-1}) := \prod_{i=0}^{m-1} Q_i.
\end{align*}
Using the pairwise commutative property one can verify that $\mathbf{Q}^2 = I$, $\mathbf{Q}^* = \mathbf{Q}$ and $R_n = R_m$ for all $n \geq m$. It can also be seen that
\begin{align*}
	\mathcal{E} = \mathcal{L}(\pi,\mathbf{Q}),
\end{align*}
and hence 
\begin{align*}
	R_\infty = \overline{P}(\mathbf{Q}) = \dfrac{1}{2}(P + \mathbf{Q}P\mathbf{Q}) = R_m.
\end{align*}

\subsection{A recursive simulation procedure for $(R_n)_{n \in \mathbb{N}}$}

In view of Proposition \ref{prop:characterizIpi}, let $(\psi_i)_{i=0}^{m-1}$ be a sequence of permutations with $\psi_i \in \Psi(\pi)$ and we take $Q_i = Q_{\psi_i}$ to be the induced permutation matrices. In this subsection, we devise a recursive simulation procedure for $R_n$ assuming $R_0 = P \in \mathcal{L}(\pi)$.

First, to simulate $R_1 = (1/2)(P + Q_0 P Q_0)$ is straightforward: with probability $1/2$ we either use $P$ or $Q_0 P Q_0$. Let $\sigma_1$ be a random permutation defined to be either the identity map $\mathbf{i}$ or $\psi_0$ with equal probability. That is,
\begin{align*}
	\sigma_1 := \begin{cases}
		\mathbf{i}, \quad \textrm{with probability 1/2},\\
		\psi_0, \quad \textrm{with probability 1/2}.\\
	\end{cases}
\end{align*}
Thus, to simulate one step of $R_1$ with an initial state $x$ to a state $y$, it is equivalent to simulate $y^\prime \sim P(\sigma_1(x),\cdot)$ followed by setting $y = \sigma_1(y^\prime)$.

Building upon $R_1$, we simulate $R_2 = (1/2)(R_1 + Q_{1 \mod m} R_1 Q_{1 \mod m})$. Let $\sigma_2$ be a random permutation defined to be either the identity map $\mathbf{i}$ or $\psi_{1 \mod m}$ with equal probability. That is, 
\begin{align*}
	\sigma_2 := \begin{cases}
		\mathbf{i}, \quad \textrm{with probability 1/2},\\
		\psi_{1 \mod m}, \quad \textrm{with probability 1/2}.\\
	\end{cases}
\end{align*}
Thus, to simulate one step of $R_2$ with an initial state $x$ to a state $y$, it is equivalent to simulate $y^\prime \sim P(\sigma_1(\sigma_2(x)),\cdot)$ followed by setting $y = \sigma_2(\sigma_1(y^\prime))$.

Continuing the above construction recursively or by induction, we simulate $R_n = (1/2)(R_{n-1} +Q_{n-1 \mod m} R_{n-1} Q_{n-1 \mod m})$. Let $\sigma_n$ be a random permutation defined to be either the identity map $\mathbf{i}$ or $\psi_{n-1 \mod m}$ with equal probability. That is,
\begin{align*}
	\sigma_n := \begin{cases}
		\mathbf{i}, \quad \textrm{with probability 1/2},\\
		\psi_{n-1 \mod m}, \quad \textrm{with probability 1/2}.\\
	\end{cases}
\end{align*}
Thus, to simulate one step of $R_n$ with an initial state $x$ to a state $y$, it is equivalent to simulate $y^\prime \sim P(\sigma_1 \circ \ldots \circ \sigma_n (x),\cdot)$ followed by setting $y = \sigma_n \circ \ldots \circ \sigma_1(y^\prime)$.

In summary, we first simulate realizations of the permutations $\sigma_1,\ldots,\sigma_n$. Starting from an initial state $x$, we draw a random $y^{\prime} \sim P(\sigma_1 \circ \ldots \circ \sigma_n (x),\cdot)$, then we set $y = \sigma_n \circ \ldots \circ \sigma_1(y^\prime)$. This simulates a move from $x$ to $y$ using $R_n$.

In comparing the additional computational costs between $R_n$ and $P$, we see that, to simulate one step of $R_n$, one needs to apply permutations for $2n$ times, namely $\sigma_1 \circ \ldots \circ \sigma_n$ followed by $\sigma_n \circ \ldots \circ \sigma_1$.

\subsection{Angle between subspaces and the rate of convergence of $R_n$ towards $R_\infty$}

In practice, to compute $R_\infty$, we can only run the alternating projections up to a finite time $n$ and arrive at $R_n$. How far away is $R_n$ from $R_\infty$? One way to measure this is, by Proposition \ref{prop:usefuldecomp}, 
\begin{align*}
	D^\pi_{KL}(R_n \| R_\infty) 
	&= \sum_{j=n}^{\infty} D^\pi_{KL}(R_j \| R_{j+1}).
\end{align*}

We now apply the theory of alternating projections to obtain a rate of convergence. First, we recall the notation of Section \ref{subsec:projectfrob}, where we have $m$ closed subspaces  $(\mathcal{M}(\pi,Q_i))_{i=0}^{m-1}$ of the Hilbert space $(\mathcal{M}, \langle \cdot,\cdot \rangle_F)$. Denote the intersection to be
$\mathcal{F} = \mathcal{F}(\pi,Q_0,\ldots,Q_{m-1}) := \cap_{i=0}^{m-1}\mathcal{M}(\pi,Q_i)$.

For any two closed subspace $(\mathcal{M}_i)_{i=1}^2$ of $\mathcal{M}$, we define the cosine of the angle between these two subspaces \cite[Definition $9.4$]{D01} to be 
\begin{align*}
	\alpha(\mathcal{M}_1,\mathcal{M}_2) := \sup\{|\langle M_1, M_2 \rangle_F|;~ M_i \in \mathcal{M}_i \cap (\mathcal{M}_1 \cap \mathcal{M}_2)^{\perp}, \norm{M_i}_F \leq 1, i \in \{1,2\}\}.
\end{align*}
Consider $\alpha(\mathcal{M}(\pi,Q_i),\mathcal{M}(\pi,Q_j))$ for $i \neq j$. If $Q_i$ is different from (resp. similar to) $Q_j$, we expect the angle between the two subspaces to be large (resp. small), leading to a small (resp. large) $\alpha$. Thus, $\alpha$ in our context can be broadly understood as a measure of dissimilarity between two permutations.
 
Let $P \in \mathcal{L}(\pi) \subseteq \mathcal{M}$ and consider the sequence $(R_n)$ defined in \eqref{eq:Rndef}. Define the projection of $P$ onto $\mathcal{F}$ under the Frobenius norm to be
$$R_\infty^{\prime} = \argmin_{M \in \mathcal{F}} \norm{P - M}_F.$$

According to \cite[Corollary $9.28$]{D01}, we have
\begin{align*}
	\lim_{n \to \infty} \norm{R_{mn} -R_\infty^{\prime}}_F = 0.
\end{align*}
Since $R_n \to R_\infty$ pointwise as shown in Theorem \ref{thm:alterprojmain}, we thus have $R_\infty = R_\infty^{\prime}$.

Define for $i \in \{0,1,\ldots,r-2\}$ 
\begin{align}\label{eq:alpha}
	\alpha_i &:= \alpha(\mathcal{M}(\pi,Q_i),\cap_{j = i + 1}^{m-1}\mathcal{M}(\pi,Q_j)), \nonumber \\
	\alpha &:= \sqrt{1 - \prod_{i=0}^{r-2}(1-\alpha_i^2)}. 
\end{align}
Using \cite[Theorem $9.33$]{D01} we arrive at
\begin{corollary}
	Let $P \in \mathcal{L}(\pi) \subseteq \mathcal{M}$ and consider the sequence $(R_n)$ defined in \eqref{eq:Rndef}, where $Q_i \in \mathcal{I}(\pi) \cap \mathcal{L}$ for $i \in \llbracket 0,m-1 \rrbracket$ is a sequence of isometric involution transition matrices. We have
	\begin{align*}
		\norm{R_{mn} - R_\infty}_F \leq \alpha^n \norm{P}_F,
	\end{align*}
	where $\alpha$ is given by \eqref{eq:alpha}. 
\end{corollary}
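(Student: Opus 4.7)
The plan is to recognize this corollary as a direct application of the classical rate-of-convergence theorem for the method of alternating projections in a Hilbert space, applied to the Hilbert space $(\mathcal{M}, \langle \cdot, \cdot \rangle_F)$ with the closed subspaces $\mathcal{M}(\pi, Q_0), \ldots, \mathcal{M}(\pi, Q_{m-1})$. By Proposition \ref{prop:projectFrob}, the map $M \mapsto \overline{M}(Q_i)$ is precisely the orthogonal projection of $M$ onto the subspace $\mathcal{M}(\pi, Q_i)$ under the Frobenius inner product. Therefore, unrolling the recursion \eqref{eq:Rndef}, the matrix $R_{mn}$ is exactly the result of applying the cyclic composition of orthogonal projections onto $\mathcal{M}(\pi, Q_0), \ldots, \mathcal{M}(\pi, Q_{m-1})$, repeated $n$ times, to the starting matrix $P$.

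With this identification in place, I would appeal directly to \cite[Theorem $9.33$]{D01}, which guarantees that for any $x$ in the ambient Hilbert space,
\begin{align*}
	\bigl\lVert (\text{one cycle})^n x - P_{\mathcal{F}} x \bigr\rVert \leq \alpha^n \lVert x - P_{\mathcal{F}} x \rVert,
\end{align*}
where $P_{\mathcal{F}}$ is the orthogonal projection onto the intersection $\mathcal{F}$ and $\alpha$ is the quantity defined in \eqref{eq:alpha} from the pairwise angles $\alpha_i$. Specializing to $x = P$ yields
\begin{align*}
	\lVert R_{mn} - R_\infty^\prime \rVert_F \leq \alpha^n \lVert P - R_\infty^\prime \rVert_F.
\end{align*}

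To reach the form stated in the corollary, I would then use the two observations already recorded in the paragraph preceding the corollary. First, the text has already established that $R_\infty^\prime = R_\infty$, since the pointwise limit of $R_n$ (which exists by Theorem \ref{thm:alterprojmain} and equals $R_\infty$) must agree on the finite state space with the Frobenius-norm limit $R_\infty^\prime$ along the subsequence $R_{mn}$. Second, since the zero matrix belongs to $\mathcal{F}$ (it is trivially fixed by the map $M \mapsto QMQ$ for every $Q$), the minimizing property of $R_\infty^\prime$ gives
\begin{align*}
	\lVert P - R_\infty^\prime \rVert_F = \min_{M \in \mathcal{F}} \lVert P - M \rVert_F \leq \lVert P - 0 \rVert_F = \lVert P \rVert_F.
\end{align*}
Combining these two facts with the Deutsch estimate produces the required bound $\lVert R_{mn} - R_\infty \rVert_F \leq \alpha^n \lVert P \rVert_F$.

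The only real content beyond citing \cite[Theorem $9.33$]{D01} is the bookkeeping that verifies its hypotheses in our setting: the subspaces are closed (they are defined by a linear equation $M = Q_i M Q_i$ in a finite-dimensional space, so automatic), and the projections involved are indeed the orthogonal ones (Proposition \ref{prop:projectFrob}). I do not expect any substantive obstacle; the mild point to watch is the consistency of the two limit objects $R_\infty$ and $R_\infty^\prime$ — defined respectively as a KL-information projection and as a Frobenius projection — but this identification has already been made in the discussion directly preceding the corollary, so the proof reduces to quoting the external theorem and inserting the trivial bound via $0 \in \mathcal{F}$.
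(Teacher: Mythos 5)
Your proposal is correct and follows essentially the same route as the paper, which likewise obtains the bound by identifying $R_{mn}$ with $n$ cycles of the orthogonal projections onto the closed subspaces $\mathcal{M}(\pi,Q_i)$ and quoting \cite[Theorem $9.33$]{D01}, after the identification $R_\infty = R_\infty^{\prime}$ made in the preceding discussion. Your additional step passing through $\norm{P - R_\infty^{\prime}}_F \leq \norm{P}_F$ via $0 \in \mathcal{F}$ is harmless bookkeeping (the cited theorem already yields the bound with $\norm{P}_F$ on the right) and does not change the argument.
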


An implication of the above Corollary allows us to answer the following question: how many alternating projection steps $t$ one need to run until we are guaranteed that $\norm{R_{t} - R_\infty}_F \leq \varepsilon$ for a given error $\varepsilon > 0$? Using the crude bound that $\norm{P}_F \leq \sqrt{|\mathcal{X}|}$, we see that one can set 
$$t \geq m \dfrac{\log (\sqrt{|\mathcal{X}|}/\varepsilon)}{\log (1/\alpha)}.$$

\section{The ``maximum speed limit" of projection samplers}\label{sec:maxspeedlimit}

In this section, we explore necessary conditions of achieving $R_n = \Pi$ or $R_\infty = \Pi$.

\subsection{A simple three-point example that achieves $\overline{P}(Q) = \Pi$}

In this subsection, we let $\mathcal{X}$ be a three-point state space. The aim of this subsection is to provide a non-trivial example that achieves $\overline{P}(Q) = \Pi$. For $P \in \mathcal{L}(\mathcal{X})$, recall that $\lambda(P)$ is the set of eigenvalues of $P$. Let $P$ be a transition matrix given by
\begin{align*}
	P = \begin{bmatrix}
		\frac{1}{2} & \frac{1}{3} & \frac{1}{6} \\
		\frac{1}{3} & \frac{1}{6} & \frac{1}{2} \\
		\frac{1}{6} & \frac{1}{2} & \frac{1}{3}
	\end{bmatrix}, \quad \lambda(P) = \bigg\{1, \pm \frac{1}{2 \sqrt{3}}\bigg\}.
\end{align*}

Clearly, $\pi = (1/3, 1/3, 1/3)$ and $P^* = P$. We take $Q$ to be
\begin{align*}
	Q = \begin{bmatrix}
		0 & 1 & 0 \\
		1 & 0 & 0 \\
		0 & 0 & 1
	\end{bmatrix}, \quad \lambda(Q) = \bigg\{1, 1, -1\bigg\}.
\end{align*}
It can easily be seen that $Q^* = Q$ and $Q^2 = I$. We compute that
\begin{align*}
	QP &= \begin{bmatrix}
		\frac{1}{3} & \frac{1}{6} & \frac{1}{2} \\
		\frac{1}{2} & \frac{1}{3} & \frac{1}{6} \\
		\frac{1}{6} & \frac{1}{2} & \frac{1}{3}
	\end{bmatrix}, \quad \lambda(QP) = \bigg\{1, \pm \frac{\sqrt{-3}}{6}\bigg\}, \\
	PQ &= \begin{bmatrix}
		\frac{1}{3} & \frac{1}{2} & \frac{1}{6} \\
		\frac{1}{6} & \frac{1}{3} & \frac{1}{2} \\
		\frac{1}{2} & \frac{1}{6} & \frac{1}{3}
	\end{bmatrix}, \quad \lambda(PQ) = \bigg\{1, \pm \frac{\sqrt{-3}}{6}\bigg\}, \\
	QPQ &= \begin{bmatrix}
		\frac{1}{6} & \frac{1}{3} & \frac{1}{2} \\
		\frac{1}{3} & \frac{1}{2} & \frac{1}{6} \\
		\frac{1}{2} & \frac{1}{6} & \frac{1}{3}
	\end{bmatrix}, \quad \lambda(QPQ) = \bigg\{1, \pm \frac{1}{2 \sqrt{3}}\bigg\}, \\
	\overline{P}(Q) &= \frac{1}{2}(P + QPQ) = \begin{bmatrix}
		\frac{1}{3} & \frac{1}{3} & \frac{1}{3} \\
		\frac{1}{3} & \frac{1}{3} & \frac{1}{3} \\
		\frac{1}{3} & \frac{1}{3} & \frac{1}{3}
	\end{bmatrix}, \quad \lambda(\overline{P}(Q)) = \bigg\{1, 0, 0\bigg\}, \\
	\frac{1}{2}(PQ + QP) &= \begin{bmatrix}
		\frac{1}{3} & \frac{1}{3} & \frac{1}{3} \\
		\frac{1}{3} & \frac{1}{3} & \frac{1}{3} \\
		\frac{1}{3} & \frac{1}{3} & \frac{1}{3}
	\end{bmatrix}, \quad \lambda\left(\frac{1}{2}(PQ + QP)\right) = \bigg\{1, 0, 0\bigg\}. 
\end{align*}

We see that there are two interesting properties of $P$: it satisfies $\mathrm{Tr}(P) = 1$ and $\lambda(P) \cap \lambda(-P) \neq \emptyset$. This motivates our investigations in the following subsections.

\subsection{A necessary condition of $R_n = \Pi$ in terms of trace}

In view of Proposition \ref{prop:usefuldecomp} and \ref{prop:converge}, we recall that the projections are trace-preserving and hence
$$\mathrm{Tr}(R_n) = \mathrm{Tr}(P).$$
Thus, if for some $n \in \mathbb{N} \cup \{\infty\}$ such that $R_n = \Pi$, this implies $\mathrm{Tr}(P) = 1$. We record this as a Corollary:

\begin{corollary}\label{cor:necessaryofRnPi}
	Let $P \in \mathcal{S}(\pi)$ and consider the sequence $(R_n)$ defined in \eqref{eq:Rndef}, where $Q_i \in \mathcal{I}(\pi) \cap \mathcal{L}$ for $i \in \llbracket 0,m-1 \rrbracket$ is a sequence of isometric involution transition matrices. If $R_n = \Pi$ for some $n \in \mathbb{N} \cup \{\infty\}$, then
	$$\mathrm{Tr}(P) = 1.$$
\end{corollary}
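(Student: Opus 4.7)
The plan is to derive the result as an immediate consequence of the trace-preserving property of the projection operation, which has already been established in the surrounding text. The key observation is that the matrix $\Pi$, all of whose rows equal $\pi$, has diagonal entries $\Pi(x,x) = \pi(x)$, so $\mathrm{Tr}(\Pi) = \sum_{x \in \mathcal{X}} \pi(x) = 1$. Combining this trivial computation with the hypothesis $R_n = \Pi$ will pin down $\mathrm{Tr}(P)$.

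First I would handle the finite case $n \in \mathbb{N}$: the last item of Proposition \ref{prop:usefuldecomp} asserts that $\mathrm{Tr}(R_n) = \mathrm{Tr}(P)$ for every $n$, which in turn is built up by iterating the identity $\mathrm{Tr}(\overline{P}(Q)) = \mathrm{Tr}(P)$ from \eqref{eq:compare4} in Proposition \ref{prop:converge}. Under the assumption $R_n = \Pi$, this immediately gives
\[
    \mathrm{Tr}(P) = \mathrm{Tr}(R_n) = \mathrm{Tr}(\Pi) = 1.
\]

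For the case $n = \infty$, I would invoke Theorem \ref{thm:alterprojmain}, which establishes both that $R_n \to R_\infty$ (hence trace continuity on the finite state space $\mathcal{X}$ is automatic) and states explicitly that $\mathrm{Tr}(R_\infty) = \mathrm{Tr}(P)$. The same one-line chain of equalities then closes the case.

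There is no substantive obstacle: all the analytical work sits in Propositions \ref{prop:converge} and \ref{prop:usefuldecomp} and in Theorem \ref{thm:alterprojmain}, and the corollary is essentially a bookkeeping remark about the trace of the rank-one limiting matrix $\Pi$. The only (trivial) care needed is to treat the finite-$n$ and $n=\infty$ cases with the appropriate reference, as the trace-preservation statement for $R_\infty$ requires the convergence result from Theorem \ref{thm:alterprojmain} rather than direct iteration.
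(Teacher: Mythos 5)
Your proof is correct and follows essentially the same route as the paper: the authors likewise derive the corollary directly from the trace-preserving property of the projections (Proposition \ref{prop:usefuldecomp} for finite $n$, Theorem \ref{thm:alterprojmain} for $n=\infty$) together with the observation that $\mathrm{Tr}(\Pi)=\sum_x \pi(x)=1$. Your explicit separation of the finite and infinite cases, with the correct reference for each, matches the intended argument.
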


Consequently, this implies that if $P$ is positive-definite so that $\mathrm{Tr}(P) > 1$, then for any sequence of $(Q_i)_{i=0}^{m-1}$, $R_n \neq \Pi$ for all $n \in \mathbb{N} \cup \{\infty\}$.


\subsection{A necessary condition of $\overline{P}(Q) = \Pi$ via the Sylvester's equation}

Let us first briefly recall the Sylvester's equation. It is a linear matrix equation in $X \in \mathcal{M}$ of the form, for given $A,B,C \in \mathcal{M}$,
\begin{align*}
	AX + XB = C.
\end{align*}
The Sylvester's theorem \cite[Theorem $2.4.4.1$]{HJ13} gives a necessary and sufficient condition for the above Sylvester's equation to admit a unique solution in $X \in \mathcal{M}$ for each given $C$: $X$ is unique if and only if $\lambda(A) \cap \lambda(-B) = \emptyset$, that is, $A$ and $-B$ have no eigenvalue in common.

In our setting, we specialize into $A = P, B = P^*$ and $C = 2\Pi$ with $P \in \mathcal{S}(\pi)$. We note that $X = \Pi$ is always a solution. Thus, if $\overline{P}(Q) = \Pi$, the Sylvester's equation has at least two solutions $X \in \{\Pi,Q\}$, and hence by the Sylvester's theorem we have $\lambda(P) \cap \lambda(-P^*) \neq \emptyset$. We record this as a Corollary:

\begin{corollary}
	Let $P \in \mathcal{S}(\pi)$ and $Q \in \mathcal{I}(\pi) \cap \mathcal{L}$ be an isometric involution transition matrix. If $\overline{P}(Q) = \Pi$, then
	$$\lambda(P) \cap \lambda(-P^*) \neq \emptyset,$$
	that is, $P$ and $-P^*$ have at least one common eigenvalue.
\end{corollary}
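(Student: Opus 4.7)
The plan is to recast the hypothesis $\overline{P}(Q) = \Pi$ as a Sylvester equation $PX + XP^* = 2\Pi$ that admits two distinct solutions, and then invoke the uniqueness part of the Sylvester theorem to force $\lambda(P) \cap \lambda(-P^*) \neq \emptyset$.

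First I would expand $\overline{P}(Q) = \frac{1}{2}(P + QP^*Q) = \Pi$, multiply on the right by $Q$, and use $Q^2 = I$ together with the identity $\Pi Q = \Pi$ to rewrite this as $PQ + QP^* = 2\Pi$. The identity $\Pi Q = \Pi$ holds because, writing $Q = Q_\psi$ for some $\psi \in \Psi(\pi)$ via Proposition \ref{prop:characterizIpi}, the $(x,y)$-entry of $\Pi Q$ equals $\sum_z \pi(z) \delta_{y=\psi(z)} = \pi(\psi^{-1}(y)) = \pi(y)$ by the $\pi$-preservation property of $\psi$. This exhibits $X = Q$ as a solution of the Sylvester equation with $A = P$, $B = P^*$, $C = 2\Pi$.

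Next I would check that $X = \Pi$ is always a solution: $P\Pi = \Pi$ since $P$ is stochastic, and $\Pi P^* = \Pi$ since $\pi$ is also stationary for $P^*$, so $P\Pi + \Pi P^* = 2\Pi$. When $|\mathcal{X}| \geq 2$ the matrices $Q$ and $\Pi$ are manifestly distinct (the entries of $Q$ lie in $\{0,1\}$ whereas those of $\Pi$ equal some $\pi(x) \in (0,1)$), so the Sylvester equation admits at least two distinct solutions.

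The conclusion then follows from the Sylvester theorem, which asserts that uniqueness of the solution is equivalent to $\lambda(A) \cap \lambda(-B) = \emptyset$; uniqueness having failed here, we must have $\lambda(P) \cap \lambda(-P^*) \neq \emptyset$. The only mildly technical step is the bookkeeping to move between $\overline{P}(Q) = \Pi$ and $PQ + QP^* = 2\Pi$ via the auxiliary identity $\Pi Q = \Pi$; once that is in place, the Sylvester theorem applies off the shelf, so I do not anticipate a substantial obstacle.
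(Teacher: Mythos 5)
Your proposal is correct and follows essentially the same route as the paper: recognizing $X=\Pi$ and $X=Q$ as two distinct solutions of the Sylvester equation $PX+XP^*=2\Pi$ and invoking the uniqueness criterion $\lambda(P)\cap\lambda(-P^*)=\emptyset$. You in fact supply details the paper leaves implicit (the identity $\Pi Q=\Pi$ and the distinctness of $Q$ and $\Pi$ for $|\mathcal{X}|\geq 2$), so there is nothing to correct.
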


Consequently, the above result implies that, for $\pi$-reversible $P \in \mathcal{L}(\pi)$, if it is positive-definite or if $|\lambda_i(P)| \neq |\lambda_j(P)|$ for all $i \neq j$, then $P$ and $-P^* = -P$ have no common eigenvalue, and hence $\overline{P}(Q) \neq \Pi$.

\subsection{Characterization of $R_\infty$ when $\pi$ is the discrete uniform distribution, and a necessary and sufficient condition of $R_\infty = \Pi$}

In this subsection, we let $n = |\mathcal{X}|$ and consider $\pi$ to be the discrete uniform distribution with $P = P^T \in \mathcal{L}(\pi)$. Without loss of generality we assume the state space is of the form $\mathcal{X} = \llbracket n \rrbracket$. For $j \in \llbracket 2,n \rrbracket$, we define the permutations $(\psi_{1,j})_{j=2}^n$ to be $\psi_{1,j}(1) = j$, $\psi_{1,j}(j) = 1$ and $\psi_{1,j}(x) = x$ for all $x \in \mathcal{X}\backslash\{1,j\}$, and denote the induced permutation matrices by $(Q_{1,j})_{j=2}^n$. Clearly, $Q_{1,j} \in \mathcal{I}(\pi) \cap \mathcal{L}$, and we recall that the intersection of $(\mathcal{L}(\pi,Q_{1,j}))_{j=2}^{n}$ is written as
$$\mathcal{E} = \bigcap_{j=2}^n \mathcal{L}(\pi,Q_{1,j}).$$

In the above setting, the main result of this subsection characterizes $R_\infty$ and demonstrates that $R_\infty$ is a linear combination of $\Pi$ and $I$. It also proves that, under the choices of the permutation matrices $(Q_{1,j})_{j=2}^n$, $\mathrm{Tr}(P) = 1$ is necessary and sufficient to achieve $R_\infty = \Pi$:

\begin{theorem}\label{thm:characterizeRinfty}
	Let $\pi$ be the discrete uniform distribution on $\mathcal{X}$ and $P = P^T \in \mathcal{L}(\pi)$. Denote the sequence of isometric involution transition matrices to be $Q_{1,j}$ as in this subsection, and define the sequence of projections $(R_l)_{l \in \mathbb{N}}$ as in \eqref{eq:Rndef}. The limit $R_\infty$ is given by
	\begin{align}\label{eq:Rinftyform}
		R_\infty = (nb) \Pi + (a-b) I = \begin{bmatrix}
			a & b & \ldots & b \\
			b & a & \ddots & \vdots \\
			\vdots & \ddots & \ddots & b \\
			b & \ldots & b & a
		\end{bmatrix},
	\end{align}
	where $a = a(Q_{1,2},\ldots,Q_{1,n},P), b = b(Q_{1,2},\ldots,Q_{1,n},P) \in [0,1]$ satisfy $nb + a-b = 1$. In particular, $R_\infty = \Pi$ if and only if $\mathrm{Tr}(P) = 1$.
\end{theorem}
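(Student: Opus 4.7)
The plan is to exploit two facts that will be preserved along the alternating-projection sequence: symmetry (because $\pi$ is uniform and $P = P^T$), and the trace (by the trace-preservation part of Proposition \ref{prop:usefuldecomp}). Combining symmetry with membership of the limit in each $\mathcal{L}(\pi, Q_{1,j})$ will force strong symmetry constraints on $R_\infty$ that, together with the transpositions $(1,j)$ generating the full symmetric group on $\llbracket n \rrbracket$, will reduce $R_\infty$ to a two-parameter family; the two parameters will then be pinned down by row-sum and trace.

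\textbf{Step 1 (Symmetry is preserved).} Since $\pi$ is uniform, the $\ell^2(\pi)$-adjoint coincides with the transpose, and $P^* = P^T = P$. I would show by induction that every $R_l$ is symmetric: if $R_l^T = R_l$, then by definition \eqref{def:overlineP} we have $R_{l+1} = \tfrac{1}{2}(R_l + Q_{1,j} R_l Q_{1,j})$ with $j = (l \bmod (n-1)) + 2$ say, and transposing uses $Q_{1,j}^T = Q_{1,j}$ to give $R_{l+1}^T = R_{l+1}$. By pointwise convergence (Theorem \ref{thm:alterprojmain}), $R_\infty$ is also symmetric.

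\textbf{Step 2 (Invariance under $S_n$).} For each $j \in \llbracket 2, n \rrbracket$, $R_\infty \in \mathcal{L}(\pi, Q_{1,j})$ means $R_\infty^* = Q_{1,j} R_\infty Q_{1,j}$. Under uniform $\pi$ and with $R_\infty$ symmetric, this reads $R_\infty = Q_{1,j} R_\infty Q_{1,j}$, i.e., $R_\infty$ commutes with each $Q_{1,j}$. Since the transpositions $(1,j)$ for $j = 2,\ldots,n$ generate $S_n$, $R_\infty$ commutes with every permutation matrix $Q_\sigma$. Equivalently, $R_\infty(x,y) = R_\infty(\sigma(x), \sigma(y))$ for all $\sigma \in S_n$ and all $x,y$, which forces $R_\infty(x,y)$ to depend only on whether $x = y$. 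Setting $a := R_\infty(x,x)$ and $b := R_\infty(x,y)$ for $x \neq y$ gives the claimed form $R_\infty = (nb)\Pi + (a-b)I$.

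\textbf{Step 3 (Pinning down $a,b$).} Row-stochasticity of $R_\infty$ (inherited since it lies in $\mathcal{L}$) gives $a + (n-1)b = 1$, i.e., $nb + a - b = 1$. By the trace-preservation item of Proposition \ref{prop:usefuldecomp} applied along the sequence and passed to the limit (using pointwise convergence on a finite state space), $\mathrm{Tr}(R_\infty) = \mathrm{Tr}(P)$, hence $na = \mathrm{Tr}(P)$. These two linear equations uniquely determine $a = \mathrm{Tr}(P)/n$ and $b = (n - \mathrm{Tr}(P))/(n(n-1))$, both of which lie in $[0,1]$ since $0 \leq \mathrm{Tr}(P) \leq n$.

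\textbf{Step 4 (Characterization of $R_\infty = \Pi$).} Since $\Pi$ has all entries equal to $1/n$, $R_\infty = \Pi$ is equivalent to $a = 1/n$, which by $na = \mathrm{Tr}(P)$ is equivalent to $\mathrm{Tr}(P) = 1$. I expect Steps 1 and 3 to be routine verifications; the conceptual content is Step 2, where one must recognize that symmetry upgrades the $(\pi, Q_{1,j})$-self-adjointness into outright commutation with all $Q_{1,j}$, and that these generate $S_n$. There is no real obstacle beyond being careful that $R_\infty$ is indeed symmetric so that the invariance argument goes through; without this, $R_\infty \in \mathcal{E}$ alone would only yield a weaker structural constraint.
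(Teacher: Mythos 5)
Your proposal is correct and follows essentially the same route as the paper: the paper's induction on the leading $k\times k$ block of $R_\infty$ is just an explicit unwinding of your observation that commuting with the transpositions $(1,j)$ (which generate $S_n$ and act $2$-transitively) forces the constant-diagonal/constant-off-diagonal form, and both arguments then pin down $a,b$ via row-stochasticity and trace preservation. The only presentational difference is that you establish symmetry of $R_\infty$ by a separate induction, whereas the paper gets it from Theorem \ref{thm:alterprojmain}, which already asserts $R_\infty \in \mathcal{L}(\pi)$, i.e.\ symmetry under uniform $\pi$.
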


\begin{proof}
	First, we shall prove by induction on $k \in \llbracket n \rrbracket$ that the first $k$ rows and $k$ columns of $R_\infty$ is of the form
	\begin{align*}
		\begin{bmatrix}
			a & b & \ldots & b & \ldots \\
			b & a & \ddots & b & \ldots \\
			\vdots & \ddots & \ddots & \vdots & \ldots \\
			b & \ldots & b & a & \ldots \\
			\ldots & \ldots & \ldots & \ldots & \ldots
		\end{bmatrix},
	\end{align*}
	that is, $R_\infty(x,x) = a$ for all $x \in \llbracket k \rrbracket$ and $R_\infty(x,y) = b$ for all $x \neq y, x,y \in \llbracket k \rrbracket$.
	
	When $k = 1$, what we seek to prove obviously holds. When $k = 2$, since $R_\infty \in \mathcal{L}(\pi,Q_{1,2})$, we see that
	\begin{align*}
		R_\infty(1,1) = R_\infty(\psi_{1,2}(1),\psi_{1,2}(1)) = R_\infty(2,2), \\
		R_\infty(1,2) = R_\infty(\psi_{1,2}(1),\psi_{1,2}(2)) = R_\infty(2,1).
	\end{align*}
	Assume that the induction hypothesis holds for some $k$. Since $R_\infty \in \bigcap_{j=2}^k \mathcal{L}(\pi,Q_{1,j})$, the first $k+1$ rows and $k+1$ columns of $R_\infty$ can be written as
	\begin{align*}
		\begin{bmatrix}
			a & b & \ldots & b & c &\ldots \\
			b & a & \ddots & b & c &\ldots \\
			\vdots & \ddots & \ddots & \vdots & \vdots & \ldots \\
			b & \ldots & b & a & c & \ldots \\
			d & \ldots & d & d & e & \ldots \\
			\ldots & \ldots & \ldots & \ldots & \ldots & \ldots
		\end{bmatrix},
	\end{align*}
	where $c,d,e \in [0,1]$ are some constants. To see that $c = d$, we note $c = R_\infty(1,k+1) = R_\infty(\psi_{1,k+1}(1),\psi_{1,k+1}(k+1)) = R_\infty(k+1,1) = d$. Similarly, we have $a = e$ since $a = R_\infty(1,1) = R_\infty(\psi_{1,k+1}(1),\psi_{1,k+1}(1)) = R_\infty(k+1,k+1) = e$. Finally, $b = c$ since $b = R_\infty(1,2) = R_\infty(\psi_{1,k+1}(1),\psi_{1,k+1}(2)) = R_\infty(k+1,2) = d = c$. This completes the induction.
	
	By Corollary \ref{cor:necessaryofRnPi}, $\mathrm{Tr}(P) = 1$ is a necessary condition of $R_\infty = \Pi$. In the opposite direction, if $\mathrm{Tr}(P) = 1$, the trace-preserving property of projections in Proposition \ref{prop:usefuldecomp} gives
	$\mathrm{Tr}(R_\infty) = 1$. We see that
	\begin{align*}
		\mathrm{Tr}(R_\infty) = \mathrm{Tr}((nb) \Pi + (a-b) I) = nb + (a-b)n = na = 1,
	\end{align*}
	which gives $a = b = 1/n$, and hence $R_\infty = \Pi$.
\end{proof}

Using \eqref{eq:Rinftyform}, we see that the right spectral gap of $R_\infty$ is
\begin{align*}
	\gamma(R_\infty) = nb.
\end{align*}
In the following proposition, we give a lower bound of $1/2$ when $P$ is closer to $\Pi$ than to $I$ in Frobenius norm:

\begin{proposition}\label{prop:spectralgaplow}
	In the setting of Theorem \ref{thm:characterizeRinfty}, if $\norm{P - \Pi}_F \leq \norm{P - I}_F$ or equivalently
	\begin{align*}
		\mathrm{Tr}(P) \leq \dfrac{n+1}{2},
	\end{align*}
	then
	\begin{align*}
		\gamma(R_\infty) \geq \dfrac{1}{2}.
	\end{align*}
\end{proposition}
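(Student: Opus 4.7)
The plan is to extract $\gamma(R_\infty)=nb$ as an explicit function of $\mathrm{Tr}(P)$ using the trace-preserving property, and then verify that the hypothesized inequality on $\mathrm{Tr}(P)$ is exactly what is needed.

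First I would confirm the equivalence of the two stated conditions. Since $\pi$ is uniform, $\Pi = (1/n)\mathbf{1}\mathbf{1}^T$, so $\mathrm{Tr}(P^T \Pi) = 1$ (using $P\mathbf{1} = \mathbf{1}$) and $\mathrm{Tr}(\Pi^T \Pi) = 1$, giving
\begin{align*}
	\norm{P-\Pi}_F^2 = \norm{P}_F^2 - 1, \qquad \norm{P-I}_F^2 = \norm{P}_F^2 - 2\mathrm{Tr}(P) + n,
\end{align*}
so $\norm{P-\Pi}_F \leq \norm{P-I}_F$ is equivalent to $\mathrm{Tr}(P) \leq (n+1)/2$.

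Second, I would use the trace-preserving property of the alternating projections (Proposition \ref{prop:usefuldecomp}, and already cited in the proof of Theorem \ref{thm:characterizeRinfty}) to write $\mathrm{Tr}(R_\infty) = \mathrm{Tr}(P)$. From the form $R_\infty = (nb)\Pi + (a-b)I$, the diagonal of $R_\infty$ equals $a$ in every entry, so $\mathrm{Tr}(R_\infty) = na$, giving $a = \mathrm{Tr}(P)/n$. Combining with the row-sum constraint $nb + (a-b) = 1$ from the theorem yields
\begin{align*}
	(n-1)b = 1 - a = 1 - \frac{\mathrm{Tr}(P)}{n}, \qquad nb = \frac{n - \mathrm{Tr}(P)}{n-1}.
\end{align*}

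Third, I would conclude by direct algebra: the inequality $nb \geq 1/2$ is equivalent to $2(n-\mathrm{Tr}(P)) \geq n-1$, i.e. $\mathrm{Tr}(P) \leq (n+1)/2$, which is the hypothesis. Since the entire argument reduces to an identity chase plus a one-line algebraic manipulation, there is no real obstacle here; the only subtlety worth double-checking is that Theorem \ref{thm:characterizeRinfty} already guarantees $R_\infty$ has the claimed $(a,b)$-structure and that $\mathrm{Tr}(R_\infty) = \mathrm{Tr}(P)$ carries over to the limit, both of which are handled inside the proof of that theorem.
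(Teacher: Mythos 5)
Your proof is correct, but it takes a different route from the paper's. The paper transfers the hypothesis $\norm{P-\Pi}_F \le \norm{P-I}_F$ to $R_\infty$ via the Frobenius--Pythagorean decomposition of Proposition \ref{prop:usefuldecomp} (both $\Pi$ and $I$ lie in $\mathcal{E}$, so the difference of squared distances is preserved along the projections), then computes $\norm{R_\infty-\Pi}_F = |a-b|\,\norm{\Pi-I}_F$ and $\norm{R_\infty-I}_F = |1-(a-b)|\,\norm{\Pi-I}_F$ to deduce $a-b\le 1/2$ and hence $nb\ge 1/2$. You instead invoke trace preservation, $\mathrm{Tr}(R_\infty)=\mathrm{Tr}(P)=na$, and combine it with the row-sum constraint to get the closed form $\gamma(R_\infty)=nb=\frac{n-\mathrm{Tr}(P)}{n-1}$. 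Your route is shorter, avoids the norm comparison entirely, and actually proves more: it gives an exact expression for the spectral gap and shows the conclusion $\gamma(R_\infty)\ge 1/2$ is \emph{equivalent} to $\mathrm{Tr}(P)\le(n+1)/2$, not merely implied by it. The equivalence of the two hypotheses is verified by the same Frobenius-norm expansion in both arguments, and both rely on the $(a,b)$-structure and the trace/distance preservation already established in Theorem \ref{thm:characterizeRinfty} and Proposition \ref{prop:usefuldecomp}, so no new machinery is needed.
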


\begin{proof}
	By Proposition \ref{prop:usefuldecomp}, the assumption $\norm{P - \Pi}_F \leq \norm{P - I}_F$ implies $\norm{R_\infty - \Pi}_F \leq \norm{R_\infty - I}_F$. Using \eqref{eq:Rinftyform}, we compute that
	\begin{align*}
		\norm{R_\infty - \Pi}_F &= |a-b| \norm{\Pi - I}_F,\\
		\norm{R_\infty - I}_F &= |1-(a-b)| \norm{\Pi - I}_F.
	\end{align*}
	This leads to $|a-b| \leq |1-(a-b)|$, and hence $a-b \leq 1/2$. Since $1 - nb = a-b$, this yields $nb \geq 1/2$.
	
	To see the equivalence, we compute that
	\begin{align*}
		\norm{P - \Pi}_F^2 &= \sum_{x,y} \left(P(x,y) - \frac{1}{n}\right)^2 \\
		&= \sum_{x,y}P(x,y)^2 - \frac{2}{n}\sum_{x,y}P(x,y) + 1 \\
		&=\sum_{x,y}P(x,y)^2  - 1,\\
		\norm{P - I}_F^2 &= \sum_{x,y}(P(x,y) - \delta_{x = y})^2 \\
		&= \sum_{x,y}P(x,y)^2 - 2\sum_{x,y}P(x,y) \delta_{x = y} + \sum_{x,y} \delta_{x = y} \\
		&=\sum_{x,y}P(x,y)^2  - 2 \sum_x P(x,x) + n,   
	\end{align*}
	and hence $\norm{P - \Pi}_F \leq \norm{P - I}_F$ is equivalent to 
	\begin{align*}
		\sum_x P(x,x) \leq \dfrac{n+1}{2}.
	\end{align*}
\end{proof}

In the remaining of this subsection, we let $c = c(P) := \mathrm{Tr}(P)$. Suppose that $c \in [0,1)$, and by Theorem \ref{thm:characterizeRinfty}, we note that
\begin{align*}
	R_\infty(Q_{1,2},\ldots,Q_{1,n},P) \neq \Pi.
\end{align*}
Consider instead the transition matrix $P^{\prime}$ given by
\begin{align*}
	P^\prime := \alpha I + (1-\alpha) P, \quad \alpha := \dfrac{1-c}{n-c} \in [0,1],
\end{align*}
then $P^{\prime T} = P^\prime \in \mathcal{L}(\pi)$. Furthermore, $\mathrm{Tr}(P^\prime) = 1$, and hence by Theorem \ref{thm:characterizeRinfty} and Proposition \ref{prop:spectralgaplow} we have
\begin{align*}
	R_\infty(Q_{1,2},\ldots,Q_{1,n},P^\prime) = \Pi, \quad \gamma(R_\infty(Q_{1,2},\ldots,Q_{1,n},P^\prime)) \geq \dfrac{1}{2}.
\end{align*}
Thus, it is advantageous to consider first $P^\prime$ and then the sequence of alternating projections $(R_l)$ induced by $P^\prime$ to improve mixing over the original $P$.

More generally, if $P$ is such that $c \in [0, \frac{n+1}{2}] \backslash \{1\}$, then by Theorem \ref{thm:characterizeRinfty} this is not an ideal situation since the limit of the projections is
\begin{align*}
	R_\infty(Q_{1,2},\ldots,Q_{1,n},P) \neq \Pi.
\end{align*}
However, by Proposition \ref{prop:spectralgaplow}
\begin{align*}
	 \gamma(R_\infty(Q_{1,2},\ldots,Q_{1,n},P)) \geq \dfrac{1}{2}.
\end{align*}
In other words, for $P \in \{P \in \mathcal{L}(\pi);~\pi(x) = 1/n \textrm{ for all } x, \mathrm{Tr}(P) \leq \frac{n+1}{2}\}$, the limit $R_\infty$ induced from any member of this family mixes fast since it has a constant order relaxation time.

A special case arises when $c = 0$, or equivalently $P(x,x) = 0$ for all $x$. This leads to $a = 0, b = 1/(n-1)$. Even if $R_\infty \neq \Pi$, $R_\infty$ is an ``optimal reversible stochastic matrix" that minimizes the worst-case asymptotic variance in the sense of \cite[Remark 3]{FHY92}.

\section{Tuning strategies of $Q$}\label{sec:tuneQ}

In this paper, given a $P \in \mathcal{L}(\pi)$, we propose projection samplers such as $\overline{P}(Q)$ or more generally the sequence of alternating projections $(R_l)_{l \in \mathbb{N}}$ as improved variants compared with the original $P$. In these cases, the isometric involution transition matrix $Q \in \mathcal{I}(\pi) \cap \mathcal{L}$ can be understood as a parameter in these algorithms, and the improvement depends on the tuning of $Q$. For instance, the choice of $Q = I$ is always feasible, yet it leads to no improvement since $\overline{P}(I) = P$. On the other hand, we have seen in Section \ref{sec:maxspeedlimit} that depending on $P$ it might be possible to achieve $R_l = \Pi$ or $R_\infty = \Pi$ with suitable choices of $Q$s.

In this section, we explore some possible tuning strategies of $Q$.

\subsection{Tuning $Q$ via optimization and Markov chain assignment problems}

The first strategy seeks to find an optimal $Q$ that minimizes the discrepancy between $\overline{P}(Q)$ and $\Pi$ or more generally between $R_l$ and $\Pi$.

Precisely, we would like to find $Q$ that minimizes the $\pi$-weighted KL divergence or the squared-Frobenius norm for a given $P \in \mathcal{S}(\pi)$:
\begin{align*}
	Q_{*,KL} &= Q_{*,KL}(P) := \argmin_{Q \in \mathcal{I}(\pi)\cap\mathcal{L}} D^\pi_{KL}(\overline{P}(Q) \| \Pi), \\
	Q_{*,F}  &= Q_{*,F}(P) := \argmin_{Q \in \mathcal{I}(\pi)\cap\mathcal{L}} \norm{\overline{P}(Q) - \Pi}_F^2. 
\end{align*}
The above optimization problems may not be solved in realistic time frame in practice, since $\pi$ may involve normalization constant that is non-tractable. Fortunately, using the Pythagorean identities in Proposition \ref{prop:projectFrob} and \ref{prop:converge2}, we see that
\begin{align*}
	Q_{*,KL} &= \argmax_{Q \in \mathcal{I}(\pi)\cap\mathcal{L}} D^\pi_{KL}(P \| \overline{P}(Q) ) =  \argmax_{\psi \in \Psi(\pi)} D^\pi_{KL}(P \| \overline{P}(Q_\psi) ), \\
	Q_{*,F}  &= \argmax_{Q \in \mathcal{I}(\pi)\cap\mathcal{L}} \norm{P - \overline{P}(Q)}_F^2 = \argmax_{\psi \in \Psi(\pi)} \norm{P - \overline{P}(Q_\psi)}_F^2. 
\end{align*}
The rightmost maximization problems can be understood as Markov chain assignment problems constrained to choosing permutations within the set $\Psi(\pi)$. While in general assignment problems can be solved in polynomial time in $|\mathcal{X}|$ \cite{KV18}, this may still be computationally infeasible in practice since $|\mathcal{X}|$ might be exponentially large in many models of interest in the context of MCMC.

The above can be generalized to consider multidimensional Markov chain assignment problems. Specifically, we seek to solve, for $m, l \in \mathbb{N}$,
\begin{align*}
	&\argmin_{\psi_i \in \Psi(\pi), \, \forall i \in \llbracket 0,m-1 \rrbracket} D^\pi_{KL}(R_l(Q_{\psi_0},\ldots,Q_{\psi_{m-1}},P) \| \Pi) \\
	&= \argmax_{\psi_i \in \Psi(\pi), \, \forall i \in \llbracket 0,m-1 \rrbracket} \sum_{j=0}^{l-1} D^\pi_{KL}(R_j(Q_{\psi_0},\ldots,Q_{\psi_{m-1}},P) \| R_{j+1}(Q_{\psi_0},\ldots,Q_{\psi_{m-1}},P)) , \\
	&\argmin_{\psi_i \in \Psi(\pi), \, \forall i \in \llbracket 0,m-1 \rrbracket} \norm{R_l(Q_{\psi_0},\ldots,Q_{\psi_{m-1}},P) - \Pi}_F^2 \\
	&= \argmax_{\psi_i \in \Psi(\pi), \, \forall i \in \llbracket 0,m-1 \rrbracket} \sum_{j=0}^{l-1} \norm{R_j(Q_{\psi_0},\ldots,Q_{\psi_{m-1}},P) - R_{j+1}(Q_{\psi_0},\ldots,Q_{\psi_{m-1}},P)}_F^2,
\end{align*}
where the equalities follow from the Pythagorean identities in Proposition \ref{prop:projectFrob} and \ref{prop:converge2}. Note that in general multidimensional assignment problems are NP hard \cite{NTP14} to solve, and there are heuristics to solve these in practice such as the cross-entropy method.

We remark that, this technique of converting the original problem of minimization of KL divergence to a maximization problem is in the spirit of evidence lower bound (ELBO) in variational inference, see for example \cite{BKM17}.

The additional computational cost in this strategy, compared with simply running the baseline Markov chain with transition matrix $P$, mainly lies in the time it takes to (approximately) solve the assignment problem using either heuristics or integer programming solvers.

\subsection{Tuning $Q$ adaptively in a single run}\label{subsec:tuneQsingle}

Let $H : \mathcal{X} \to \mathbb{R}$ be a target Hamiltonian function, and $\pi_\beta$ be its associated Gibbs distribution at inverse temperature $\beta \geq 0$, that is, for $x \in \mathcal{X}$,
\begin{align*}
	\pi_\beta(x) := \dfrac{e^{-\beta H(x)}}{Z_\beta},
\end{align*}
where $Z_\beta := \sum_{x \in \mathcal{X}} e^{-\beta H(x)}$ is the normalization constant. Thus, for $\beta > 0$, we see that $\pi_\beta(x) = \pi_\beta(y)$ if and only if $H(x) = H(y)$.

The second tuning strategy lies in adjusting $Q$ adaptively on the fly as the algorithm progresses. Specifically, given a $P \in \mathcal{L}(\pi_\beta)$ such as the Metropolis-Hastings algorithm or the Gibbs sampler, we run a non-homogeneous and adaptive Markov chain with transition matrix at each time $l \in \mathbb{N}$ to be
\begin{align*}
	\dfrac{1}{2}(P + Q_{\psi_l} P Q_{\psi_l}),
\end{align*}
along with the initial condition $Q_{\psi_1} = I$. We record the trajectories of this adaptive Markov chain. At time $l \geq 2$, suppose the past trajectory is $\{x_0,x_1,\ldots,x_{l-1}\}$. We search for an ``equi-energy" pair that is not mapped in $Q_{\psi_{l-1}}$: if there exists $i,j$ such that $H(x_i) = H(x_j)$, $\psi_{l-1}(x_i) = x_i, \, \psi_{l-1}(x_j) = x_j$, then we update the permutation to $\psi_{l}(x_i) = x_j, \, \psi_l(x_j) = x_i, \, \psi_l(x) =\psi_{l-1}(x)$ for all $x \in \mathcal{X} \backslash \{x_i,x_j\}$. More generally, the permutation matrix can be updated in say every $k$ steps, rather than $k = 1$ step, in order to reduce computational cost. For example, the adaptive projection sampler updates in every $k = 50$ steps in Section \ref{sec:numerical}.

The additional computational cost in this strategy, compared with simply running the baseline Markov chain with transition matrix $P$, mainly lies in the time it takes to adaptively update the permutation matrix, that is, the time it takes to search for equi-probability matches at each step that are not previously mapped.

\subsection{Tuning $Q$ using an exploration chain in multiple runs}\label{subsec:tuneQexplore}

The third strategy uses an exploration Markov chain, such as the proposal chain in Metropolis-Hastings or the Metropolis-Hastings chain at high temperature, for $k \in \mathbb{N}$ times. Each time a permutation matrix $Q_l$ is generated as outlined in Section \ref{subsec:tuneQsingle} for $l \in \llbracket k \rrbracket$. Then, we combine this sequence of matrices $(Q_l)_{l=1}^k$ using alternating projections as discussed in Section \ref{sec:altproj}. This idea is inspired by the equi-energy sampler \cite{KZW06}.

The additional computational cost in this strategy, compared with simply running the baseline Markov chain with transition matrix $P$, mainly lies in the time it takes to simulate the $k$ exploratory Markov chains, as well as the cost to search for equi-probability matches to generate $(Q_l)_{l=1}^k$.

\subsection{Tuning $Q$ by leveraging the symmetry of the state space $\mathcal{X}$}

In some statistical physics models of interests, the state space has a natural symmetric structure. For example, in the models investigated in Section \ref{sec:numerical} below, $\mathcal{X}$ is either $\{-1,+1\}^d$ or $\{-1,0,+1\}^d$, along with the property that the Hamiltonian is symmetric in the sense that it satisfies $H(\mathbf{x}) = H(-\mathbf{x})$ for all $\mathbf{x} \in \mathcal{X}$. In this context, a natural equi-probability permutation with respect to $\pi_\beta$ is the map $\mathbf{x} \mapsto -\mathbf{x}$.

If $H$ is not symmetric, then one can consider a symmetrized Gibbs distribution $\mu_\beta$ given by
\begin{align*}
	\mu_\beta(\mathbf{x}) \propto e^{-\beta G(\mathbf{x})}, \quad G(\mathbf{x}) = \frac{1}{2}(H(\mathbf{x}) + H(-\mathbf{x})).
\end{align*}
One can then apply the permutation $\mathbf{x} \mapsto -\mathbf{x}$ to improve the convergence of Markov chains that target $\mu_\beta$, followed by possibly an importance sampling step to correct for the bias in order to sample from $\pi_\beta$. More generally, one can follow the line of work by \cite{Ying2024} to introduce the so-called group orbit average distribution when there are multiple symmetries in the state space. We leave this direction as future work.

\section{Application to Metropolis-Hastings}\label{sec:MH}

The aim of this section is to concretely illustrate and quantify the benefit of using the projection sampler $\overline{P}(Q)$ over the original $P$, when the latter is taken to be the transition matrix of the classical Metropolis-Hastings (MH) algorithm. We also present a simple model in which $\pi_\beta$ is a discrete bimodal distribution, where the relaxation time of the projection sampler, upon suitable choice of $Q$, is polynomial in $\beta$ and size of the state space while that of the MH is exponential in these parameters. Thus, the relaxation (and hence mixing) time is improved from exponential to polynomial via this technique.

To this end, let us briefly recall the MH dynamics. Given a proposal Markov chain with transition matrix $N$ that is ergodic and reversible, and a Gibbs distribution $\pi_{\beta}$ associated with Hamiltonian $H$ and inverse temperature $\beta$, the MH algorithm is a discrete-time Markov chain with transition matrix given by $P_\beta = P_\beta(N,H) = (P_\beta(x,y))_{x,y \in \mathcal{X}}$, where
$$P_\beta(x,y) := \begin{cases} N(x,y) \min \left\{ 1,e^{\beta \left(H(x)-H(y)\right)} \right\} = N(x,y) e^{-\beta (H(y)-H(x))_+}, &\mbox{if } x \neq y; \\
1	- \sum_{z: z \neq x} P_\beta(x,z), & \mbox{if } x = y. \end{cases}$$

For $\alpha \in [0,1]$ and $\psi \in \Psi(\pi)$, we compute that, for $x \neq y$,
\begin{align}
	\overline{(P_\beta)}_\alpha(Q_\psi)(x,y) &= \alpha N(x,y) e^{-\beta (H(y)-H(x))_+} + (1-\alpha) N(\psi(x),\psi(y)) e^{-\beta (H(\psi(y))-H(\psi(x)))_+} \nonumber \\
	&= (\alpha N(x,y) + (1-\alpha) N(\psi(x),\psi(y))) e^{-\beta (H(y)-H(x))_+} \nonumber \\
	&= P_\beta(\alpha N + (1-\alpha) Q_\psi N Q_\psi,H)(x,y), \label{eq:Pbetaalpha}
\end{align}
where the second equality follows from $H(x) = H(\psi(x))$. Therefore, the family $(	\overline{(P_\beta)}_\alpha(Q_\psi))_{\alpha \in [0,1]}$ can be interpreted as MH chains with modified proposals $(\alpha N + (1-\alpha) Q_\psi N Q_\psi)_{\alpha \in [0,1]}$ targeting the same $H$ at the same inverse temperature $\beta$. This interpretation also illustrates it is perhaps advantageous to use $\overline{(P_\beta)}_\alpha(Q)$: the proposal $\alpha N + (1-\alpha) Q N Q$ when $\alpha > 0$ has at least as many connections as the original proposal $N$, that is, $\{(x,y);~ N(x,y) > 0\} \subseteq \{(x,y);~ (\alpha N + (1-\alpha)QNQ)(x,y) > 0\} $. In this sense, this technique to improve mixing is in the spirit of \cite{GH19}.


To quantify the speed of convergence towards $\pi_\beta$ of the MH chain, we now recall an important parameter that is known as the hill-climbing constant, energy barrier or the critical height in the literature. In a broad sense, it measures the difficulty of navigating on the landscape of $H$. Precisely, we say that a path from $x$ to $y$ is any sequence of points starting from $x_0 = x, x_1, x_2,\ldots, x_n =y$ such that $N(x_{i-1},x_i) > 0$ for $i \in \llbracket n \rrbracket$. As $N$ is irreducible, for any $x \neq y$ such path exists. We write $\Gamma^{x,y} = \Gamma^{x,y}(N)$ to be the set of paths from $x$ to $y$, and elements of $\Gamma^{x,y}$ are denoted by $\gamma = (\gamma_i)_{i=0}^n$. The value of the Hamiltonian $H(x)$ can be interpreted as the elevation at $x$, and the highest elevation along a path $\gamma \in \Gamma^{x,y}$ is 
$$\mathrm{Elev}(\gamma) = \max\{H(\gamma_i);~\gamma_i \in \gamma\},$$ 
and the lowest possible highest elevation along path(s) from $x$ to $y$ is 
$$\mathbf{H}(x,y) := \min\{\mathrm{Elev}(\gamma);~\gamma \in \Gamma^{x,y}\}.$$
For $P_\beta(N,H)$, the associated critical height is defined to be
\begin{align}
	h(P_\beta) &:= \max_{x,y \in \mathcal{X}}\{\mathbf{H}(x,y) - H(x) - H(y)\} + \min_z H(z), \label{eq:cm1} 
\end{align} 

Using a classical result of \cite[Lemma 2.3, 2.7]{HS88}, the right spectral gap of the MH chain can be bounded using $h(P_\beta)$: there exists constants $0 < c = c(N) \leq C = C(N) < \infty$ that do not depend on $\beta$ such that
\begin{align}\label{eq:HSspectral}
	c e^{-\beta h(P_\beta)} \leq \gamma(P_\beta) \leq C e^{- \beta h(P_\beta)}.
\end{align}

We now compare the critical heights of the family $(\overline{(P_\beta)}_\alpha(Q))_{\alpha \in [0,1]}$, and demonstrates the optimality of $\alpha = 1/2$: the sampler $(1/2)(P_\beta + QP_\beta Q)$ has the smallest critical height within this family, thus leading to improved convergence over the original $P_\beta$. In addition to critical height, $(1/2)(P_\beta + QP_\beta Q)$ also enjoys some advantageous properties over the original $P$ in terms of entropic and spectral parameters as well as asymptotic variances if we recall Section \ref{sec:compare}. It justifies the decision to focus on analyzing $(1/2)(P_\beta + QP_\beta Q)$ in this context.

\begin{proposition}\label{prop:criticalh}
	Let $P_\beta$ be the transition matrix of the MH chain with ergodic proposal $N$ and target distribution $\pi_\beta$, and $Q \in \mathcal{I}(\pi_\beta) \cap \mathcal{L}$. We have
	\begin{itemize}
		\item(Similarity preserves critical height) \begin{align*}
			h(P_\beta) &= h(Q P_\beta Q).
		\end{align*}
	
		\item(Optimality of $\alpha = 1/2$) For $\alpha \in [0,1]$, 
		\begin{align*}
			h(\overline{(P_\beta)}_\alpha(Q)) \leq h(P_\beta).
		\end{align*}
		In particular, 
		\begin{align*}
			\min_{\alpha \in [0,1]} h(\overline{(P_\beta)}_\alpha(Q)) = h(\overline{(P_\beta)}_{1/2}(Q)).
		\end{align*}
	\end{itemize}
\end{proposition}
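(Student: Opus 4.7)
The starting point is the reformulation already derived in the text at \eqref{eq:Pbetaalpha}: for any $\alpha \in [0,1]$ and $\psi \in \Psi(\pi_\beta)$, the matrix $\overline{(P_\beta)}_\alpha(Q_\psi)$ is itself a Metropolis-Hastings chain targeting $\pi_\beta$ with modified proposal $N_\alpha := \alpha N + (1-\alpha) Q_\psi N Q_\psi$. The plan is to reduce everything to a structural observation about the critical height: inspecting the definition of $\Gamma^{x,y}$, $\mathrm{Elev}$, $\mathbf{H}$ and \eqref{eq:cm1}, the critical height $h$ depends on the proposal \emph{only} through its support (the directed edge set $\{(u,v): \mathrm{proposal}(u,v) > 0\}$) together with $H$; numerical values are irrelevant. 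I would combine this remark with the equi-energy property $H \circ \psi = H$, which follows from $\psi \in \Psi(\pi_\beta)$ because $\pi_\beta(x) = \pi_\beta(\psi(x))$ forces $H(x) = H(\psi(x))$ when $\beta > 0$.

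For the first bullet, I would note that conjugation by $Q_\psi$ acts on the edge set of the proposal by relabelling: the map $\gamma = (\gamma_0,\ldots,\gamma_n) \mapsto (\psi(\gamma_0),\ldots,\psi(\gamma_n))$ is a bijection $\Gamma^{x,y}(Q_\psi N Q_\psi) \to \Gamma^{\psi(x),\psi(y)}(N)$ whose elevation is preserved since $H \circ \psi = H$. This gives $\mathbf{H}_{Q_\psi N Q_\psi}(x,y) = \mathbf{H}_N(\psi(x),\psi(y))$. Substituting into \eqref{eq:cm1} together with the outer change of variables $(x,y) \mapsto (\psi(x),\psi(y))$, which is a bijection on $\mathcal{X}^2$ leaving $H(x)+H(y)$ invariant, yields $h(Q P_\beta Q) = h(P_\beta)$.

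For the second bullet, the support of $N_\alpha$ contains that of $N$ whenever $\alpha > 0$, so every path admissible for $N$ remains admissible for $N_\alpha$, giving $\mathbf{H}_{N_\alpha}(x,y) \leq \mathbf{H}_N(x,y)$ pointwise and hence $h(\overline{(P_\beta)}_\alpha(Q_\psi)) \leq h(P_\beta)$; the endpoint $\alpha = 1$ is trivial and $\alpha = 0$ is the first bullet. For the optimality of $\alpha = 1/2$, the crucial remark is that for every $\alpha \in (0,1)$ the support of $N_\alpha$ equals $\mathrm{supp}(N) \cup \mathrm{supp}(Q_\psi N Q_\psi)$ and is therefore \emph{constant} in $\alpha$ on $(0,1)$. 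Since $h$ depends on the proposal only through its support, $\alpha \mapsto h(\overline{(P_\beta)}_\alpha(Q_\psi))$ is constant on $(0,1)$, so the value at $\alpha = 1/2$ attains the minimum over $[0,1]$.

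The main obstacle is more conceptual than technical: one has to keep clearly separated the numerical values of the proposal (which control the spectral gap constants in \eqref{eq:HSspectral} but are irrelevant for $h$) from its support (the only thing that feeds into $h$). Once that separation is made explicit, the first bullet reduces to a relabelling bijection and the second to an edge-set inclusion plus the observation that convex combinations with strictly positive weights do not change the support. No delicate estimate is needed.
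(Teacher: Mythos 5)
Your proof is correct, but it follows a genuinely different route from the paper's. The paper never touches the combinatorial definition \eqref{eq:cm1} directly: it identifies $h$ as the exponential decay rate of the spectral gap via the Holley--Stroock bounds \eqref{eq:HSspectral}, writing $h(P_\beta) = -\lim_{\beta\to\infty}\beta^{-1}\ln\gamma(P_\beta)$, and then imports the already-proved spectral facts $\lambda(QP_\beta Q)=\lambda(P_\beta)$ (Proposition \ref{prop:similar}) for the first bullet and $\gamma(\overline{(P_\beta)}_\alpha(Q))\geq\gamma(P_\beta)$ (Proposition \ref{prop:spectralspeedup}) for the second, obtaining the optimality of $\alpha=1/2$ by the same self-application trick via \eqref{eq:Pbarbar} used elsewhere in Section \ref{sec:compare}. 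You instead work directly with \eqref{eq:cm1}, observing that $h$ sees the proposal only through its support: the first bullet becomes the relabelling bijection $\gamma\mapsto\psi(\gamma)$ combined with $H\circ\psi=H$, and the second becomes the inclusion $\{N>0\}\subseteq\{\alpha N+(1-\alpha)Q_\psi NQ_\psi>0\}$ for $\alpha>0$. The paper's route buys economy (it reuses the Weyl-inequality machinery and needs no new combinatorial argument), at the cost of passing through the $\beta\to\infty$ asymptotics. Your route is self-contained, requires no limit, and in fact proves more: since the support of $\alpha N+(1-\alpha)Q_\psi NQ_\psi$ is constant on $\alpha\in(0,1)$, the map $\alpha\mapsto h(\overline{(P_\beta)}_\alpha(Q))$ is constant there, so every interior $\alpha$ is a minimizer --- a refinement the paper's spectral argument does not yield. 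The one hypothesis you should make explicit (as the paper also implicitly does via \eqref{eq:Pbetaalpha}) is $\beta>0$, so that $\pi_\beta(x)=\pi_\beta(\psi(x))$ really forces $H(x)=H(\psi(x))$.
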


The above result also illustrates a way to tune $Q$: we should seek to choose $Q$ such that the critical height $h(\overline{(P_\beta)}_{1/2}(Q))$ is minimized. In the remaining of this section, we shall specialize into a discrete bimodal example on a line that has been investigated in the literature \cite{MZ03}.

Specifically, we consider $\mathcal{X} = \{-J, -J+1,\ldots,J-1,J\}$ for $J \in \mathbb{N}$, $H(x) = -|x|$ for $x < J-1$, $H(J-1) = -J$ and $H(J) = - J - 1$, while the proposal chain $N$ is taken to be a simple nearest-neighbor random walk on $\mathcal{X}$ with holding probability of $1/2$ at the two boundaries $\pm J$. In this setting, there is a global mode of $\pi_\beta$ at $J$ and a local mode located at $- J$. It can readily be seen that
\begin{align*}
	h(P_\beta) = J = H(0) - H(-J).
\end{align*}
The bottleneck of mixing in this case is the hill at $x = 0$ that separates the two modes, in which the MH chain needs to climb over if it is initiated at $x < 0$ or $x > 0$, which is exponentially unlikely as $\beta \to \infty$.

Let $\psi(-J) = J-1, \psi(J-1) = -J$ and $\psi(x) = x$ for $x \in \mathcal{X}\backslash\{-J,J-1\}$, and we take $Q = Q_\psi$. It can readily be seen that $Q^2 = I$ and $Q^* = Q$ since $H(-J) = H(J-1) = -J$. It turns out this choice of $Q$ is optimal since the critical height of $\overline{(P_\beta)}_{1/2}(Q)$ is zero. To see that, since $h(\overline{(P_\beta)}_{1/2}(Q))$ is attained in a path from a local minimum to a global minimum of $H$, the lowest elevation is zero since $QNQ(-J,J) = N(J-1,J) = 1/2 > 0$. This is graphically illustrated in Figure \ref{fig:criticalheight}.
\begin{figure}[H]
	\centering
	\includegraphics[width=0.75\textwidth]{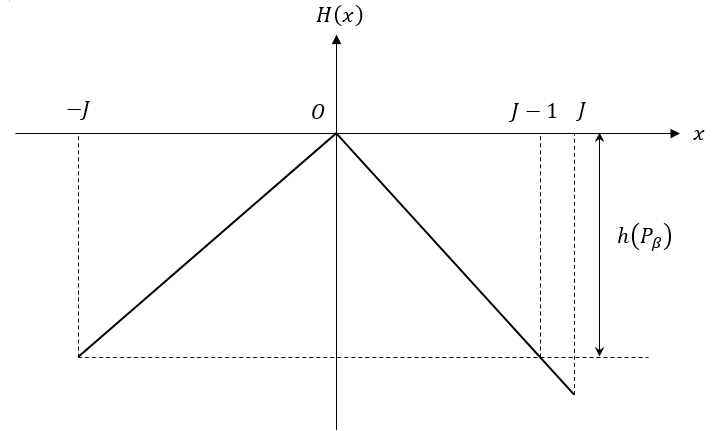}
	\caption{A landscape with a local minimum at $-J$ and a global minimum at $J$. The original critical height is $h(P_\beta) = J$, while $h(\overline{(P_\beta)}_{1/2}(Q)) = 0$. The reason is that there is a zero elevation path from $-J$ to $J-1$ to $J$ owing to the choice of $Q$.}
	\label{fig:criticalheight}
\end{figure}

Using \eqref{eq:HSspectral}, the relaxation time of $\overline{(P_\beta)}_{1/2}(Q)$ is subexponential while that of $P_\beta$ is exponential in $\beta$ and $J$. Note that at each time there is at most two additional permutation steps in $(1/2)(P_\beta + Q P_\beta Q)$ compared with the original $P_\beta$.

\begin{proposition}
	In the bimodal example, we have
	\begin{align*}
		\lim_{\beta \to \infty} \dfrac{1}{\beta} \ln (t_{rel}(\overline{(P_\beta)}_{1/2}(Q))) = 0, \\
		\lim_{\beta \to \infty} \dfrac{1}{\beta} \ln (t_{rel}(P_\beta)) = J.
	\end{align*}
\end{proposition}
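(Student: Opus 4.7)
The plan is to apply the Holley-Stroock spectral bound \eqref{eq:HSspectral} to each of the two MH chains and extract the limits from the respective critical heights. The preceding discussion already asserts $h(P_\beta) = J$ and $h(\overline{(P_\beta)}_{1/2}(Q)) = 0$; I would begin by formally verifying both identifications.

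For $h(P_\beta) = J$, every path in the nearest-neighbor proposal $N$ from the local minimum $-J$ to the global minimum $J$ must traverse the origin, so $\mathbf{H}(-J, J) = 0$. Combined with $H(-J) = -J$, $H(J) = -J-1$, and $\min_z H(z) = -J-1$, the pair $(-J, J)$ contributes $2J+1$ to the inner max in \eqref{eq:cm1}, and one checks that no other pair does more. For $h(\overline{(P_\beta)}_{1/2}(Q)) = 0$, observe via \eqref{eq:Pbetaalpha} that $\overline{(P_\beta)}_{1/2}(Q)$ is itself an MH chain, with modified proposal $N' := \tfrac{1}{2}(N + QNQ)$. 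Since $\psi$ swaps $-J$ and $J-1$, $QNQ$ contributes the shortcut edges $-J \leftrightarrow J$ and $-J \leftrightarrow J-2$ (and their symmetric counterparts), while $N'$ remains irreducible and reversible with respect to the uniform distribution. Using the shortcut, for any pair $(x, y)$ one can construct a path in $N'$ of elevation at most $\max(H(x), H(y))$ by descending monotonically from $x$ to the nearest minimum, crossing via the shortcut if $x$ and $y$ lie on opposite sides of the origin, then ascending monotonically to $y$. Hence $\mathbf{H}'(x, y) = \max(H(x), H(y))$ for every pair, and \eqref{eq:cm1} collapses to $\max_{x,y}\bigl(-\min(H(x), H(y))\bigr) + \min_z H(z) = (J+1) + (-J-1) = 0$.

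With the two critical heights in hand, the Holley-Stroock inequalities $c e^{-\beta h} \leq \gamma \leq C e^{-\beta h}$, whose constants depend on the proposal but not on $\beta$, invert to $h + \tfrac{1}{\beta}\ln(1/C) \leq \tfrac{1}{\beta}\ln t_{rel} \leq h + \tfrac{1}{\beta}\ln(1/c)$, so sending $\beta \to \infty$ recovers $h$ in each case. The main substantive step is the ``flattened landscape'' claim $\mathbf{H}'(x,y) = \max(H(x), H(y))$ for all pairs, rather than only for the bottleneck $(-J, J)$; once the shortcut edge is exploited, this reduces to a short case analysis on the signs of $x$ and $y$, and everything else is bookkeeping.
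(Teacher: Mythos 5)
Your proposal is correct and follows essentially the same route as the paper: identify the critical heights $h(P_\beta)=J$ and $h(\overline{(P_\beta)}_{1/2}(Q))=0$, then invert the Holley--Stroock bounds \eqref{eq:HSspectral} to extract the exponential rates. The only difference is that you verify the two critical-height identifications (in particular the flattened-landscape claim $\mathbf{H}'(x,y)=\max(H(x),H(y))$ via the shortcut edge $-J\leftrightarrow J$) in detail, whereas the paper asserts them in the surrounding discussion; your verification is sound.
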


In fact, a finer polynomial upper bound can be obtained for $t_{rel}(\overline{(P_\beta)}_{1/2}(Q))$. Precisely, in the notations of \cite{I94}, we have
\begin{align*}
	b_{\Gamma} \leq \dfrac{2J(2J-1)}{2} = 2J^2 - J, \quad \gamma_{\Gamma} \leq 2(2J-1) + 4 = 4J + 2, \quad d^* = 4.
\end{align*}
Applying \cite[Theorem $4.1$]{I94} in view of these upper bounds leads to
\begin{proposition}
	In the bimodal example, we have
	\begin{align*}
		t_{rel}(\overline{(P_\beta)}_{1/2}(Q)) \leq b_\Gamma \gamma_\Gamma d^* = \mathcal{O}(J^3).
	\end{align*}
\end{proposition}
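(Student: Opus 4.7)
The plan is to invoke the canonical paths bound of Ingrassia \cite[Theorem 4.1]{I94}, which upper-bounds the relaxation time of a reversible Markov chain by the product of an edge congestion $b_\Gamma$, a maximum path length $\gamma_\Gamma$, and a local-degree parameter $d^*$ associated with a chosen system of canonical paths $\Gamma = \{\gamma_{x,y}\}$. The sampler $\overline{(P_\beta)}_{1/2}(Q)$ is itself $\pi_\beta$-reversible: a direct computation using $\pi_\beta(x) = \pi_\beta(\psi(x))$ and the $\pi_\beta$-reversibility of $P_\beta$ shows that $\pi_\beta(x) QP_\beta Q(x,y) = \pi_\beta(y) QP_\beta Q(y,x)$, so the detailed balance condition is preserved under averaging, and the hypotheses of Ingrassia's theorem are met.

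I would first expose the structure of the modified proposal. By \eqref{eq:Pbetaalpha}, $\overline{(P_\beta)}_{1/2}(Q)$ is an MH chain at temperature $\beta$ with proposal $\widetilde{N} := \tfrac{1}{2}(N + Q_\psi N Q_\psi)$. Since $Q_\psi N Q_\psi(x,y) = N(\psi(x),\psi(y))$, the modified graph augments the nearest-neighbor line with shortcut edges linking the two wells, in particular $\{-J, J\}$ (from $N(J-1,J)=1/2$) and $\{-J, J-2\}$ (from $N(J-1,J-2)=1/2$). Because the endpoints of each such shortcut have equal Hamiltonian values, these transitions carry no barrier penalty after Metropolization.

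Next, I would specify canonical paths tailored to this augmented geometry. For same-well pairs, take the direct nearest-neighbor path; for cross-well pairs, walk from $x$ to the nearest ``mouth'' of a shortcut (one of $\{-J, J-1\}$), cross the shortcut edge once, then walk to $y$. With this choice, the three parameters admit the bounds stated in the excerpt: $\gamma_\Gamma \leq 2(2J-1) + 4 = 4J+2$, since each half of a path contributes at most $2J-1$ nearest-neighbor steps together with a controlled number of boundary and shortcut transitions; $b_\Gamma \leq 2J(2J-1)/2 = 2J^2 - J$ by direct enumeration of ordered pairs whose canonical path crosses any given edge (the worst case occurring on an edge near the center of a well); and $d^* = 4$, which reflects the fact that every vertex of the augmented proposal graph has degree at most four.

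Plugging these three bounds into \cite[Theorem 4.1]{I94} gives
\[
t_{rel}\bigl(\overline{(P_\beta)}_{1/2}(Q)\bigr) \;\leq\; b_\Gamma\,\gamma_\Gamma\, d^* \;\leq\; (2J^2 - J)(4J+2)\cdot 4 \;=\; \mathcal{O}(J^3),
\]
which is the claim. The main obstacle is the careful verification of the congestion bound $b_\Gamma$: every cross-well canonical path traverses a shortcut edge, so one must route crossing pairs symmetrically between the two available shortcut edges and the two mouths so that no single shortcut is over-used; once this balanced routing is chosen, the enumeration reduces to a standard counting argument on the interval and the claim follows.
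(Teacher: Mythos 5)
Your proposal is correct and follows essentially the same route as the paper: the paper likewise just asserts the bounds $b_\Gamma \leq 2J^2 - J$, $\gamma_\Gamma \leq 4J+2$, $d^* = 4$ and cites Ingrassia's Theorem 4.1, while you additionally sketch the canonical-path construction behind those constants. One small slip: the shortcut endpoints do \emph{not} have equal Hamiltonian values ($H(-J) = -J$ while $H(J) = -J-1$ and $H(J-2) = -J+2$); the equality $H(-J) = H(J-1)$ is what makes $\psi$ admissible, and the absence of an exponential factor in the bound rests on the zero critical height of $\overline{(P_\beta)}_{1/2}(Q)$ established in the preceding discussion rather than on equal energies across each shortcut edge.
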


\section{Improving discrete-uniform samplers with general permutations and projections}\label{sec:general}

In this section, we shall consider ergodic $P \in \mathcal{S}(\pi)$ where $\pi$ is the discrete uniform distribution on $\mathcal{X}$. In earlier sections, we have restricted ourselves to $Q \in \mathcal{I}(\pi) \cap \mathcal{L}$, the set of isometric involution transition matrices, and shown that such $Q$s are induced by equi-probability permutations in Proposition \ref{prop:characterizIpi}. In this paper, we shall relax this assumption of $Q$ to general permutation matrix in this section only.

In this setting, it is not necessary to use MCMC samplers to sample from the discrete uniform: if we have a $Q_\sigma$ drawn uniformly at random from the set of permutation matrices, then we can consider $Q_\sigma e_1$, where $e_1$ is the vector with $1$ in the first entry and $0$ in all remaining entries. This resulting vector would have a $1$ in a position sampled uniformly from $\llbracket n \rrbracket$. The main message in this section is that the kernel $\overline{P}(Q)$ has improved mixing time over the original $P$.

Precisely, we define
\begin{align*}
	\mathcal{Q} := \{Q_\psi; \psi \in \mathbf{P}\}
\end{align*}
and let $Q \in \mathcal{Q}$. If $Q = Q_\psi$, we see that $Q^* = Q_{\psi^{-1}}$, the $\ell^2(\pi)$-adjoint of $Q$. We also note that in general $Q^* \neq Q$. In addition, it is obvious to see that $Q$ is unitary since $Q Q^* = Q^* Q = I$. Generalizing \eqref{def:overlineP} to $Q \in \mathcal{Q}$, we analogously define
\begin{align*}
	\overline{P}(Q) := \dfrac{1}{2}(P + QP^*Q).
\end{align*}
The advantage of working under the setting of discrete uniform $\pi$ is that all of $P, QP, PQ, QPQ, \overline{P}(Q) \in \mathcal{S}(\pi)$, thus it is sensible to compare these transition matrices as candidate samplers of $\pi$.

We first demonstrate that a few results in earlier sections such as Section \ref{sec:basicdef} and Section \ref{sec:compare} can be generalized to a general permutation $Q$. Our first result states that, in terms of one-step KL divergence from $\Pi$ or the KL-divergence Dobrushin coefficient, the samplers $P, QP, PQ, QPQ$ cannot be distinguished. The proof is omitted as it is analogous to Proposition \ref{prop:convergegeneral}.

\begin{proposition}\label{prop:convergegeneral}
	Let $\pi$ be the discrete uniform distribution, $P \in \mathcal{S}(\pi)$ and $Q \in \mathcal{Q}$ be a permutation matrix. Let $\Pi$ be the matrix where each row equals to $\pi$. We have
	\begin{itemize}
		\item(One-step contraction measured by $D^{\pi}_{KL}$) \begin{align*}
			D^{\pi}_{KL}(P \| \Pi) &= D^{\pi}_{KL}(P Q \| \Pi) = D^{\pi}_{KL}(Q P \| \Pi) = D^{\pi}_{KL}(Q P Q\| \Pi).
		\end{align*}
		
		\item(KL-divergence Dobrushin coefficient)
		\begin{align*}
			c_{KL}(P) = c_{KL}(PQ) = c_{KL}(QP) = c_{KL}(QPQ).
		\end{align*}
	\end{itemize}
	
\end{proposition}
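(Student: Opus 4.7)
The plan is to exploit two elementary invariance properties of the $\pi$-weighted KL divergence when $\pi$ is uniform and $Q = Q_\psi$ is an arbitrary permutation matrix. The key lemma I would establish first is that for any stochastic matrices $M,N$ on $\mathcal{X}$,
\[
D^\pi_{KL}(MQ \| NQ) = D^\pi_{KL}(QM \| QN) = D^\pi_{KL}(M \| N).
\]
The proof is a change of variables: right multiplication satisfies $(MQ)(x,y) = M(x, \psi^{-1}(y))$, so the inner sum over $y$ is merely reindexed; left multiplication satisfies $(QM)(x,y) = M(\psi(x), y)$, so the outer sum over $x$ is reindexed, and the uniform weight $\pi(x) = 1/n$ is precisely what ensures that this reindexing leaves the weighted sum unchanged.

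For the first bullet point (one-step contraction), I would observe that $\Pi Q = Q\Pi = \Pi$ whenever $\pi$ is uniform and $Q$ is a permutation matrix. Applying the lemma with $N = \Pi$ then gives, for example,
\[
D^\pi_{KL}(PQ \| \Pi) = D^\pi_{KL}(PQ \| \Pi Q) = D^\pi_{KL}(P \| \Pi),
\]
and symmetric arguments dispatch $QP$ and $QPQ$.

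For the second bullet point, the additional ingredient is that $M \mapsto MQ$ and $M \mapsto QM$ are bijections of $\mathcal{S}(\pi)$ onto itself, since $\pi Q = \pi$ when $\pi$ is uniform. For $c_{KL}(PQ)$, plugging the lemma directly into $D^\pi_{KL}(MPQ \| NPQ)/D^\pi_{KL}(M \| N)$ rewrites the numerator as $D^\pi_{KL}(MP \| NP)$ while leaving the denominator untouched, so $c_{KL}(PQ) = c_{KL}(P)$. For $c_{KL}(QP)$, I would reparametrize the supremum via $M' := MQ$, $N' := NQ$ and use the lemma in the denominator to turn the ratio into $D^\pi_{KL}(M'P \| N'P)/D^\pi_{KL}(M' \| N')$, whose supremum is $c_{KL}(P)$. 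The case $QPQ$ then follows by applying the $QP$ argument to the chain $PQ$ or, equivalently, the $PQ$ argument to $QP$.

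The bookkeeping most prone to slipping is the reparametrization step for $c_{KL}(QP)$, where one must simultaneously verify that $M \mapsto MQ$ is a bijection of $\mathcal{S}(\pi)$, that $M \neq N$ is equivalent to $MQ \neq NQ$, and that the denominator transforms via the lemma; everything else is a routine mechanical substitution. Because all nontrivial content is concentrated in the single change-of-variables lemma, the overall proof is short and indeed closely parallels that of Proposition \ref{prop:converge}, justifying the paper's decision to omit it.
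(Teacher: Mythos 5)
Your proposal is correct, and it takes a genuinely more elementary route than the one the paper points to. The paper omits the proof of this proposition, referring to the argument for Proposition~\ref{prop:converge}: there the invariance $D^\pi_{KL}(MQ\|NQ)=D^\pi_{KL}(M\|N)$ is obtained by sandwiching with the contraction inequality $D^\pi_{KL}(MQ\|NQ)\le c_{KL}(Q)\,D^\pi_{KL}(M\|N)$ applied in both directions (using $c_{KL}(Q)\le 1$ and, in the present general-permutation setting, that $Q^{-1}$ is again a permutation matrix), the left-multiplied version is deduced from the duality of Proposition~\ref{prop:deformKLprop} together with the bisection property of \cite{CW23}, and the Dobrushin-coefficient identities follow from submultiplicativity of $c_{KL}$ as in \cite{WC23}. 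You instead prove the same invariance by a direct change of variables in the defining double sum, reindexing the inner sum for right multiplication and the outer sum for left multiplication, the latter being exactly where uniformity of $\pi$ enters; and you handle $c_{KL}(QP)$ by reparametrizing the supremum via the bijection $M\mapsto MQ$ of $\mathcal{S}(\pi)$ rather than by submultiplicativity. The checks you flag as delicate (that $M\mapsto MQ$ maps $\mathcal{S}(\pi)$ onto itself because $\pi Q=\pi$ for uniform $\pi$, that it is injective since $Q$ is invertible, and that the denominator transforms by the lemma) are precisely the right ones and all go through. What your route buys is self-containedness: it avoids invoking the external results on $c_{KL}(Q)$ and the bisection property, and it makes transparent exactly where uniformity of $\pi$ (or, in the earlier sections, the equi-probability property of $\psi$) is used. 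What the paper's route buys is reuse: the same sandwich argument is stated once for $\mathcal{I}(\pi)\cap\mathcal{L}$ with general $\pi$ and then transplanted here with only the cosmetic change from $Q^2=I$ to $QQ^{-1}=I$. Both proofs are complete.
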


The second result gives a Pythagorean identity under $D^{\pi}_{KL}$, and its proof is similar to Proposition \ref{prop:converge2}.

\begin{proposition}[Pythagorean identity]\label{prop:converge2general}
	Let $\pi$ be the discrete uniform distribution, $P \in \mathcal{S}(\pi)$ and $Q \in \mathcal{Q}$ be a permutation matrix. Let $\Pi$ be the matrix where each row equals to $\pi$. We have
	\begin{align*}
		D^{\pi}_{KL}(\overline{P}(Q) \| \Pi) &\leq  D^{\pi}_{KL}(P  \| \overline{P}(Q) ) +  D^{\pi}_{KL}(\overline{P}(Q) \| \Pi) = D^{\pi}_{KL}(P \| \Pi),
	\end{align*}
	and the equality holds if and only if $\overline{P}(Q) = P$. 
	
	Similarly, if $P$ is further assumed to be $\pi$-reversible, then
	\begin{align*}
		c_{KL}(\overline{P}(Q)) &\leq c_{KL}(P).
	\end{align*}
\end{proposition}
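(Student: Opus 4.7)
The plan is to mirror the argument for Proposition \ref{prop:converge2}, adapting the Pythagorean identity of Proposition \ref{prop:basic} to the setting where $Q$ is a general permutation rather than an isometric involution. A key preliminary observation is that although $Q^* = Q^{-1} \neq Q$ in general, the uniformity of $\pi$ still ensures that $\Pi$ plays the role of a ``target'' with nice symmetry: $Q\Pi Q = \Pi$ and $\Pi^* = \Pi$. Moreover, a short calculation shows that $\overline{P}(Q)$ satisfies $\overline{P}(Q) = Q\overline{P}(Q)^*Q$, since $\overline{P}(Q)^* = \tfrac{1}{2}(P^* + Q^{-1}P Q^{-1})$ and conjugating by $Q$ on both sides recovers $\overline{P}(Q)$.

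The main step is to establish the equality
\begin{align*}
D^{\pi}_{KL}(P \| \Pi) = D^{\pi}_{KL}(P \| \overline{P}(Q)) + D^{\pi}_{KL}(\overline{P}(Q) \| \Pi).
\end{align*}
I would do this by direct expansion, noting that the $\ln \Pi(x,y) = -\ln n$ terms can be pulled out of the sums (since $P$ and $\overline{P}(Q)$ are stochastic and $\pi$ is uniform) and cancel between the two sides. What remains is to verify
\begin{align*}
\sum_{x,y} P(x,y)\,\ln \overline{P}(x,y) = \sum_{x,y} (QP^*Q)(x,y)\,\ln \overline{P}(x,y).
\end{align*}
Writing $Q = Q_\psi$ and unwinding indices, $(QP^*Q)(x,y) = P(\psi^{-1}(y), \psi(x))$ for uniform $\pi$, so the change of variables $(u,v) = (\psi^{-1}(y), \psi(x))$ rewrites the right-hand side as $\sum_{u,v} P(u,v)\,\ln \overline{P}(\psi^{-1}(v), \psi(u))$. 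The identity then follows from the entrywise symmetry $\overline{P}(\psi^{-1}(v), \psi(u)) = \overline{P}(u,v)$, which is exactly the entrywise translation of the operator identity $\overline{P}(Q) = Q\overline{P}(Q)^*Q$ from the preliminary step.

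Once the equality is in hand, the first inequality of the proposition is immediate from non-negativity of $D^{\pi}_{KL}(P \| \overline{P}(Q))$, and equality throughout holds if and only if $P = \overline{P}(Q)$. For the second assertion under $\pi$-reversibility, I would use joint convexity of KL divergence: when $P = P^*$ we have $\overline{P}(Q) = \tfrac{1}{2}(P + QPQ)$, so for any $M, N \in \mathcal{S}(\pi)$,
\begin{align*}
D^{\pi}_{KL}(M \overline{P}(Q) \| N \overline{P}(Q)) \leq \tfrac{1}{2} D^{\pi}_{KL}(MP \| NP) + \tfrac{1}{2} D^{\pi}_{KL}(MQPQ \| NQPQ),
\end{align*}
and then invoke Proposition \ref{prop:convergegeneral}, which gives $c_{KL}(QPQ) = c_{KL}(P)$, to bound both terms by $c_{KL}(P)\, D^{\pi}_{KL}(M \| N)$. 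Taking the supremum over $M \neq N$ yields $c_{KL}(\overline{P}(Q)) \leq c_{KL}(P)$.

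The main delicate point will be the symmetry step: one must carefully verify that the double change of variables $(x,y) \mapsto (\psi^{-1}(y), \psi(x))$ is a bijection on $\mathcal{X}^2$ and that it aligns the two sides as claimed. This depends crucially on both $\pi$ being uniform (so that entries of $\Pi$ factor out cleanly and $P^* = P^T$) and on $Q$ being a permutation matrix (so that $Q^* = Q^{-1}$ and the inverse substitution is a true involution on indices).
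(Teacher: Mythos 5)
Your proof is correct. For the KL part you take a more self-contained route than the paper: the paper derives Proposition \ref{prop:converge2general} by noting that the machinery behind Proposition \ref{prop:converge2} --- the Pythagorean identity of Proposition \ref{prop:basic} together with the identities $D^{Q}_{KL}=D^{\pi}_{KL}$ and $D^{\pi}_{KL}(QPQ\|\Pi)=D^{\pi}_{KL}(P\|\Pi)$ --- carries over verbatim once one checks that $\overline{P}(Q)=Q\overline{P}(Q)^{*}Q$ and $c_{KL}(Q),c_{KL}(Q^{-1})\leq 1$ still hold for a general permutation, whereas you re-derive the Pythagorean identity from scratch by expanding all three divergences under the uniform $\pi$ and verifying directly that the cross term $\sum_{x,y}\bigl(P(x,y)-(QP^{*}Q)(x,y)\bigr)\ln\overline{P}(x,y)$ vanishes via the index bijection $(x,y)\mapsto(\psi^{-1}(y),\psi(x))$ and the entrywise symmetry $\overline{P}(u,v)=\overline{P}(\psi^{-1}(v),\psi(u))$. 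Both arguments hinge on exactly the same structural fact (the generalized self-adjointness of $\overline{P}(Q)$), so the difference is one of packaging: the paper's version reuses existing lemmas and generalizes to non-uniform $\pi$ with isometric involutions, while yours is elementary and makes transparent where uniformity of $\pi$ (so that $P^{*}=P^{T}$, $Q^{*}=Q^{-1}$, and the $\ln n$ terms cancel) enters. For the Dobrushin-coefficient part your argument via joint convexity of $D^{\pi}_{KL}$ plus $c_{KL}(QPQ)=c_{KL}(P)$ from Proposition \ref{prop:convergegeneral} is precisely the paper's argument (which cites convexity of $c_{KL}$ directly), merely unpacked one level. One small point worth making explicit in a write-up: the term-by-term expansion of the divergences requires the convention $0\ln 0=0$ and the observation that $\overline{P}(Q)(x,y)\geq\tfrac12 P(x,y)$, so no $-\infty$ contributions arise; this is routine but should be stated.
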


For $\alpha \in [0,1]$, we see that
\begin{align*}
	\overline{\alpha P + (1-\alpha) QP^*Q}(Q) = \overline{P}(Q).
\end{align*}
Together with Proposition \ref{prop:converge2general}, we observe that the choice of $\alpha = 1/2$ is optimal within the family $(\alpha P + (1-\alpha) QP^*Q)_{\alpha \in [0,1]}$ as it minimizes the KL divergence $D^{\pi}_{KL}$:

\begin{corollary}[Optimality of $\alpha = 1/2$]\label{cor:optimal1/2general}
	Let $\pi$ be the discrete uniform distribution, $P \in \mathcal{S}(\pi)$ and $Q \in \mathcal{Q}$ be a permutation matrix. We have
	\begin{align*}
		\min_{\alpha \in [0,1]} D^{\pi}_{KL}(\alpha P + (1-\alpha) QP^*Q \| \Pi) &= D^{\pi}_{KL}(\overline{P}(Q) \| \Pi).
	\end{align*}
\end{corollary}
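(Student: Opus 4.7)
The plan is to reduce the corollary to Proposition \ref{prop:converge2general} by observing that the projection $\overline{(\cdot)}(Q)$ is constant on the one-parameter family $P_\alpha := \alpha P + (1-\alpha) Q P^* Q$, so each member of the family dominates its common projection $\overline{P}(Q)$ in the KL Pythagorean sense.

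First, I would verify that $P_\alpha \in \mathcal{S}(\pi)$ for every $\alpha \in [0,1]$. Since $\pi$ is uniform, $P \in \mathcal{S}(\pi)$ is doubly stochastic, and so is $P^*$ (its transpose). Because $Q$ is a permutation, $Q P^* Q$ is a doubly stochastic matrix, hence a stationary transition matrix for the uniform $\pi$, and any convex combination $P_\alpha$ remains so.

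Next, the key algebraic identity: I would show that $\overline{P_\alpha}(Q) = \overline{P}(Q)$ for all $\alpha \in [0,1]$. Using the definition $\overline{M}(Q) = \tfrac{1}{2}(M + Q M^* Q)$, together with $(QP^*Q)^* = Q^* P Q^*$ and $QQ^* = Q^*Q = I$, one computes
\begin{align*}
    Q P_\alpha^* Q &= \alpha\, Q P^* Q + (1-\alpha)\, Q Q^* P Q^* Q = \alpha\, Q P^* Q + (1-\alpha)\, P,
\end{align*}
so that $\overline{P_\alpha}(Q) = \tfrac{1}{2}\bigl(P_\alpha + Q P_\alpha^* Q\bigr) = \tfrac{1}{2}(P + Q P^* Q) = \overline{P}(Q)$.

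Finally, applying Proposition \ref{prop:converge2general} to $P_\alpha$ (which is legitimate by the first step) yields
\begin{align*}
    D^\pi_{KL}(\overline{P}(Q) \| \Pi) = D^\pi_{KL}(\overline{P_\alpha}(Q) \| \Pi) \leq D^\pi_{KL}(P_\alpha \| \Pi)
\end{align*}
for every $\alpha \in [0,1]$. Since $P_{1/2} = \overline{P}(Q)$ attains equality, the infimum over $\alpha$ is achieved at $\alpha = 1/2$ with value $D^\pi_{KL}(\overline{P}(Q) \| \Pi)$, which is exactly the claim. There is no real obstacle here beyond the bookkeeping involved in the adjoint calculation for a general (non-involutive) permutation $Q$; everything else is an immediate consequence of the Pythagorean identity already established.
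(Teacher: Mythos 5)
Your proposal is correct and follows essentially the same route as the paper: the paper also establishes $\overline{\alpha P + (1-\alpha)QP^*Q}(Q) = \overline{P}(Q)$ and then invokes the Pythagorean inequality of Proposition \ref{prop:converge2general} with $P$ replaced by the mixture. Your write-up merely supplies the bookkeeping (stationarity of the mixture and the adjoint computation using $QQ^* = Q^*Q = I$) that the paper leaves implicit.
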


To apply the projection sampler $\overline{P}(Q)$ in practice, one may seek to tune $Q$ using similar strategies discussed in Section \ref{sec:tuneQ}. For instance, using the Pythagorean identity in Proposition \ref{prop:converge2general}, we see that
\begin{align*}
	\argmin_{Q \in \mathcal{Q}} D^\pi_{KL}(\overline{P}(Q) \| \Pi) &= \argmax_{\psi \in \mathbf{P}} D^\pi_{KL}(P \| \overline{P}(Q_\psi) ),
\end{align*}
where the rightmost optimization problem is an assignment problem.

\subsection{The example of Diaconis-Holmes-Neal}

In this subsection, we specialize into the following: let $\mathcal{X} = \llbracket n \rrbracket$, and consider $P$ to be the nearest-neighbour simple random walk with holding probability of $1/2$ at the two endpoints $1$ and $n$, that is, $P(1,1) = P(n,n) = 1/2$, $P(x,x+1) = 1/2$ for $x \in \llbracket n-1 \rrbracket$ and $P(x,x-1) = 1/2$ for $x \in \llbracket 2,n \rrbracket$. Clearly $P$ satisfies the assumption of this section: it is ergodic and admits the discrete uniform stationary distribution. 

This $P$ demonstrates a diffusive behaviour in the sense that the underlying Markov chain has a worst-case total variation mixing time of the order of $n^2$. This motivates \cite{DHM00} to introduce a non-reversible lifting of $P$ that aims at correcting the diffusive behaviour, who also prove that order $n$ steps are necessary and sufficient for the lifted chain to mix in worst-case total variation distance.

Let us recall that for $\mu, \nu \in \mathcal{P}(\mathcal{X})$, the total variation distance between them is 
\begin{align*}
	\norm{\mu - \nu}_{TV} := \dfrac{1}{2} \sum_{x \in \mathcal{X}} |\mu(x) - \nu(x)|,
\end{align*}
and the worst-case total variation mixing time of the Markov chain associated with $P$ is, for $\varepsilon > 0$,
\begin{align*}
	t_{mix}(P,\varepsilon) := \inf \bigg\{n \in \mathbb{N}; \max_{x \in \mathcal{X}} \norm{P^n(x,\cdot) - \pi}_{TV} < \varepsilon \bigg\}.
\end{align*}

The main result of this subsection is that, if the permutation matrix $Q$ is drawn uniformly at random from $\mathcal{Q}$, then with high probability $t_{mix}(\overline{P}(Q),\varepsilon)$ is at most of the order $\ln n$:

\begin{proposition}
	Let $P \in \mathcal{S}(\pi)$ be the nearest-neighbour simple random walk on $\llbracket n \rrbracket$ described at the beginning of this subsection, and $Q$ be a permutation matrix drawn uniformly at random from $\mathcal{Q}$. For all $\varepsilon \in (0,1)$, there exists $C(\varepsilon)$ such that with high probability
	\begin{align*}
		t_{mix}(\overline{P}(Q),\varepsilon) \leq \dfrac{\ln n}{\ln 2} + C(\varepsilon) \sqrt{\ln n}.
	\end{align*}
\end{proposition}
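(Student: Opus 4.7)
Because $P$ is reversible with respect to the uniform distribution, $P^*=P$, so $\overline{P}(Q)=\tfrac{1}{2}(P+QPQ)$. Writing $P=\tfrac{1}{2}(S_++S_-)$ in terms of the right- and left-shift operators (with reflection at the endpoints) and $T_\pm=QS_\pm Q$, this becomes $\overline{P}(Q)=\tfrac{1}{4}(S_++S_-+T_++T_-)$. I interpret this as the random walk on the (at most) $4$-regular multigraph $G_\sigma$ obtained as the edge union of the natural Hamiltonian path $1-2-\cdots-n$ and the permuted Hamiltonian path $\sigma^{-1}(1)-\sigma^{-1}(2)-\cdots-\sigma^{-1}(n)$. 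The $\log n/\log 2$ leading constant says that each step should contribute $\log 2$ nats of entropy, while the $\sqrt{\log n}$ correction is the characteristic cutoff window.

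First I would establish the local tree-likeness of $G_\sigma$: for $\sigma$ uniform on $\mathbf{P}$, a standard first/second moment union bound shows that with high probability every ball of radius $r=(1-o(1))\log_2 n$ in $G_\sigma$ contains at most one cycle, because chord-path coincidences are rare. This reduces the analysis of $\overline{P}(Q)^t$ for $t\le (1+o(1))\log_2 n$ to a random walk on a rooted tree whose root has degree $4$ and subsequent vertices have branching factor $2$ on either side of the path when we project onto the non-backtracking component along each Hamiltonian path.

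Second, I would bound the $\chi^2$-distance $\|\overline{P}(Q)^t(x,\cdot)-\pi\|^2_{2,\pi}$ by expanding $\overline{P}(Q)^t$ as a sum over binary words $\epsilon\in\{0,1\}^t$ indicating whether $P$ or $QPQ$ is applied at step $i$. Using the tree-like local structure, the diagonal return probability at time $2t$ decays like $2^{-2t}\cdot n^{-1}$ times a combinatorial factor counting closed non-backtracking walks, which is known to behave like the return probability on the $4$-regular tree. This yields $\|\overline{P}(Q)^t(x,\cdot)-\pi\|^2_{2,\pi}\le n\cdot 2^{-2t}\cdot\mathrm{poly}(t)$ with high probability over $\sigma$, and Pinsker-type inequalities then give the claimed total-variation bound for $t=\log n/\log 2+C(\varepsilon)\sqrt{\log n}$. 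The $\sqrt{\log n}$ window arises from CLT-type fluctuations in the number of ``$P$-steps'' versus ``$QPQ$-steps'' (a $\mathrm{Binomial}(t,1/2)$), each type having a drift $1$ in its own Hamiltonian path, together with the usual Gaussian correction in $\chi^2$-cutoff arguments.

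The principal obstacle is the decoupling needed to get the sharp constant $1/\log 2$ rather than just an $O(\log n)$ bound. Because the same random $\sigma$ governs the chord edges out of every vertex, the chord neighbourhoods revealed by the walk are not literally independent, and a naive union bound over all paths of length $\log_2 n$ is too weak. Handling this requires either a lazy exploration coupling that reveals $\sigma$ along the trajectory and controls the probability of each ``collision'' (two independent chord probes landing in the already revealed set) by an explicit $t/n$-type bound, or an adaptation of the path-counting/second-moment machinery of \cite{BHP23}; either route is where the bulk of the technical effort lies.
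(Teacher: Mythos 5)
Your proposal takes a completely different route from the paper: the paper's entire proof is a one-line application of Theorem 1.1 of \cite{D24}, checking only that $P$ is reversible with respect to the uniform distribution and that its entropy rate is $\ln 2$ (which is where the constant $\ln n/\ln 2$ comes from). What you have written is an outline of the kind of first/second-moment, locally-tree-like, path-counting argument by which such a cutoff theorem is proved, but it is not a proof: you yourself flag that the decoupling needed to obtain the sharp constant $1/\ln 2$ (rather than a crude $O(\log n)$ bound) is unresolved and is ``where the bulk of the technical effort lies.'' Since that step is exactly the content of the result, the proposal has a genuine gap; as it stands it reconstructs the heuristic behind the cited theorem rather than establishing the bound.

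There is also a concrete structural error in your reduction. In this section $Q=Q_\psi$ is a uniformly random permutation, not an involution, so $Q^{-1}=Q^{*}\neq Q$ and $(QP^{*}Q)(x,y)=(Q_\psi P Q_\psi)(x,y)=P(\psi(x),\psi^{-1}(y))$, which is \emph{not} the similarity transform $(Q_\psi P Q_\psi^{-1})(x,y)=P(\psi(x),\psi(y))$. Consequently $\overline{P}(Q)$ is not the simple random walk on the undirected $4$-regular multigraph obtained as the edge-union of the two Hamiltonian paths: it is a non-reversible doubly stochastic chain whose out-neighbourhood of $x$ is $\{x\pm1\}\cup\psi^{-1}(\{\psi(x)\pm1\})$ while its in-neighbourhood is a different set, so your ``ball of radius $r$ in $G_\sigma$'' and the non-backtracking closed-walk counts would have to be set up for this directed structure (your picture is correct precisely when $\psi$ is an involution, a vanishing fraction of $\mathcal{Q}$). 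A minor further slip: $P$ holds with probability $1/2$ at the endpoints rather than reflecting. The entropic heuristic ($\ln 2$ nats per step, Gaussian $\sqrt{\ln n}$ window) is the right intuition and matches the paper's invocation of the entropy rate, but as written the argument is neither complete nor an accurate description of the chain being analysed.
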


\begin{proof}
	We apply \cite[Theorem $1.1$]{D24}, where we note that $P$ is $\pi$-reversible, and the entropy rate of $P$ is $\ln 2$.
\end{proof}

\section{Numerical experiments}\label{sec:numerical}

To demonstrate the speedup enjoyed by the projection samplers, in this section we present numerical results concerning Ising model on the line, Edwards-Anderson (EA) spin glass, and the Blume-Capel (BC) model. For reproducibility, the code used in our experiments is available at \url{https://github.com/mchchoi/permutation/tree/main}.

\textcolor{black}{Detailed descriptions of the experimental protocol including models, samplers, parameters and convergence diagnostics can be found in Appendix in Section \ref{subsec:protocol}.}

\subsection{Ising model on the line}


The results are presented in Figure \ref{fig:9panel_Ising} \textcolor{black}{and Table \ref{tab:Ising}}. In these simulations, we see that all projection samplers are able to transverse or hop between the modes $\mathbf{+1}$ and $\mathbf{-1}$ using the permutations. \textcolor{black}{This improved mixing is also reflected in a larger average jump distance, closer sample mean of magnetization to $0$ and shorter CI for the fixed Q and adaptive projection samplers.} On the other hand, the standard MH chain seems to struggle to move between these two modes and exhibits a diffusive behaviour in the magnetization trajectories. \textcolor{black}{We also note that both samplers reach similar Hamiltonian function values.}

\begin{table}[H]
	\centering
	\scriptsize  
	\begin{tabular}{l|cc|cc|cc}
		\toprule
		& \multicolumn{2}{c|}{Fixed $Q$} & \multicolumn{2}{c|}{Adaptive} & \multicolumn{2}{c}{Exploratory} \\
		& Standard MH & Fixed $Q$ & Standard MH & Adaptive & Standard MH & Exploratory \\
		\midrule
		Sample mean             & -0.24 & 0.10 & -0.27 & 0.04 & 0.38 & -0.07 \\
		95\% CI                 & [-0.76,0.28] & [-0.14,0.35] & [-0.92,0.38] & [-0.31,0.40] & [-0.10,0.87] & [-0.64,0.50] \\
		Avg. jump distance      & 0.00168 & 0.00235 & 0.00158 & 0.00408 & 0.00144 & 0.00164 \\
		\bottomrule
	\end{tabular}
	\caption{Summary statistics of magnetization for different samplers in Ising model on the line.}
	\label{tab:Ising}
\end{table}

\begin{figure}[H]
	\centering
	\begin{subfigure}[t]{\textwidth}
		\centering
		\begin{subfigure}[t]{0.3\textwidth}
			\includegraphics[width=\linewidth]{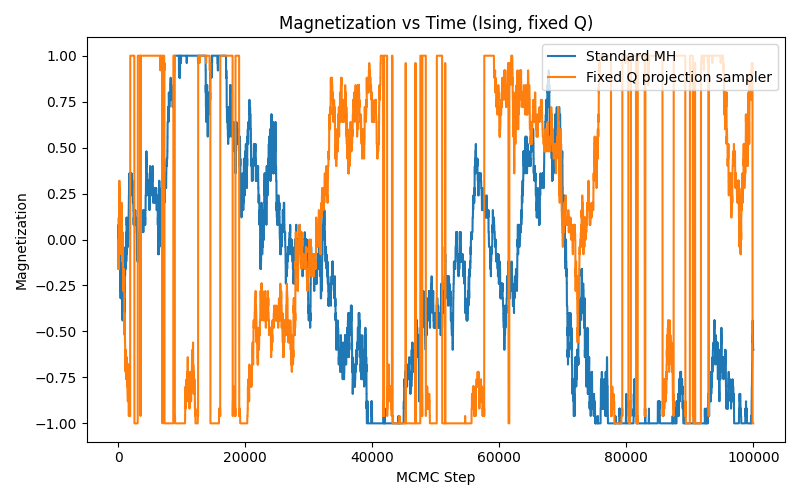}
		\end{subfigure}
		\hfill
		\begin{subfigure}[t]{0.3\textwidth}
			\includegraphics[width=\linewidth]{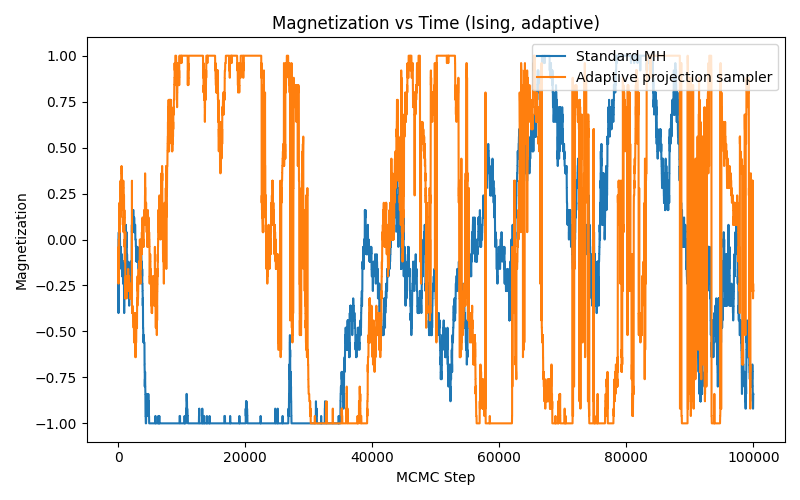}
		\end{subfigure}
		\hfill
		\begin{subfigure}[t]{0.3\textwidth}
			\includegraphics[width=\linewidth]{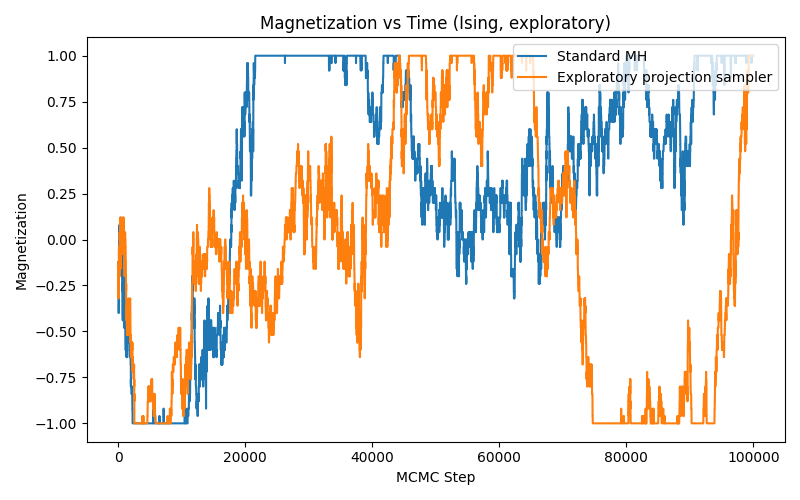}
		\end{subfigure}
		\caption{Magnetization against time.}
	\end{subfigure}
	
	\vspace{1em}
	
	\begin{subfigure}[t]{\textwidth}
		\centering
		\begin{subfigure}[t]{0.3\textwidth}
			\includegraphics[width=\linewidth]{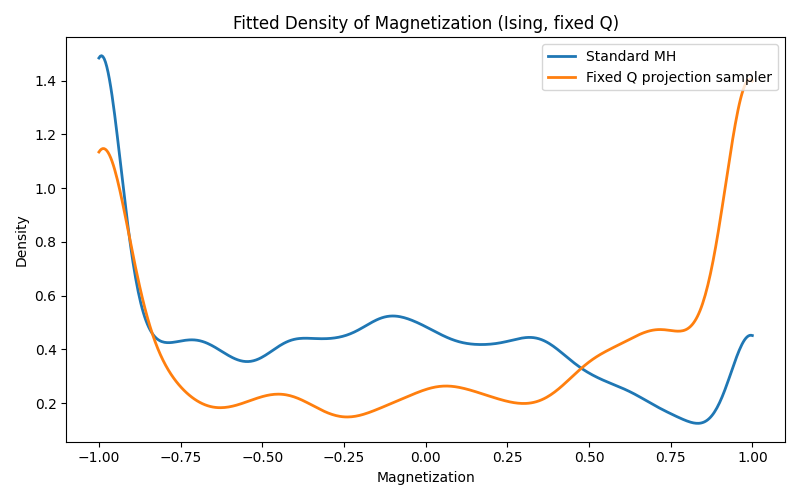}
		\end{subfigure}
		\hfill
		\begin{subfigure}[t]{0.3\textwidth}
			\includegraphics[width=\linewidth]{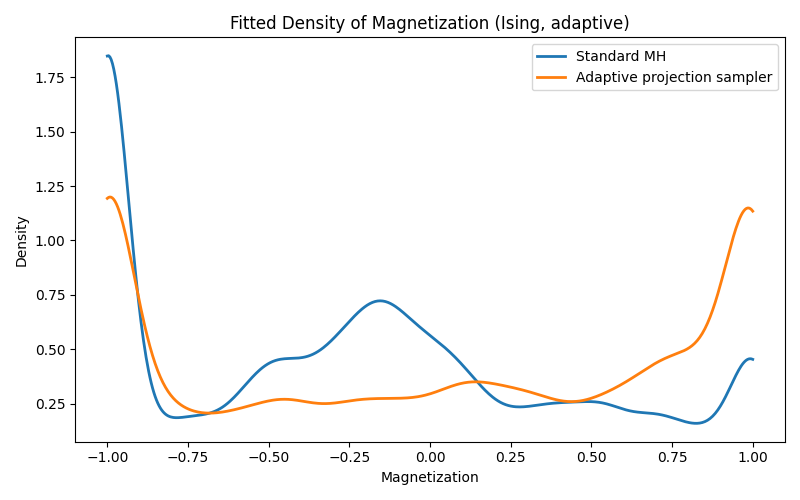}
		\end{subfigure}
		\hfill
		\begin{subfigure}[t]{0.3\textwidth}
			\includegraphics[width=\linewidth]{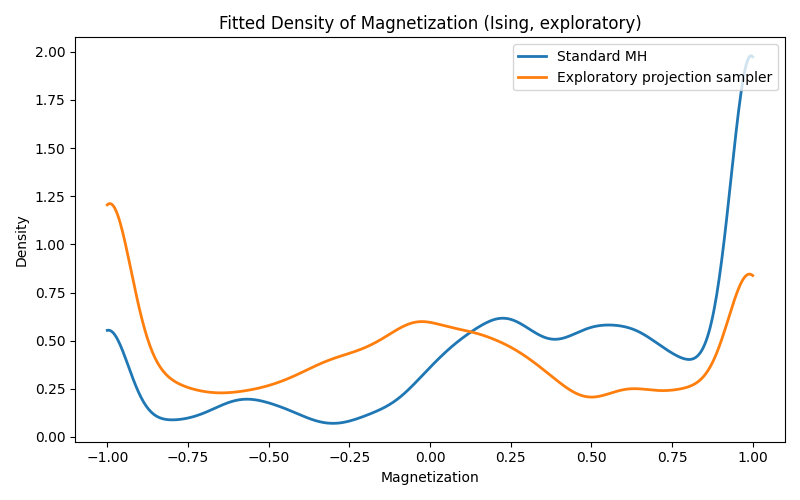}
		\end{subfigure}
		\caption{\textcolor{black}{Fitted density of magnetization.}}
	\end{subfigure}
	
	\vspace{1em}
	
	  \begin{subfigure}[t]{\textwidth}
		\centering
		\begin{subfigure}[t]{0.3\textwidth}
			\includegraphics[width=\linewidth]{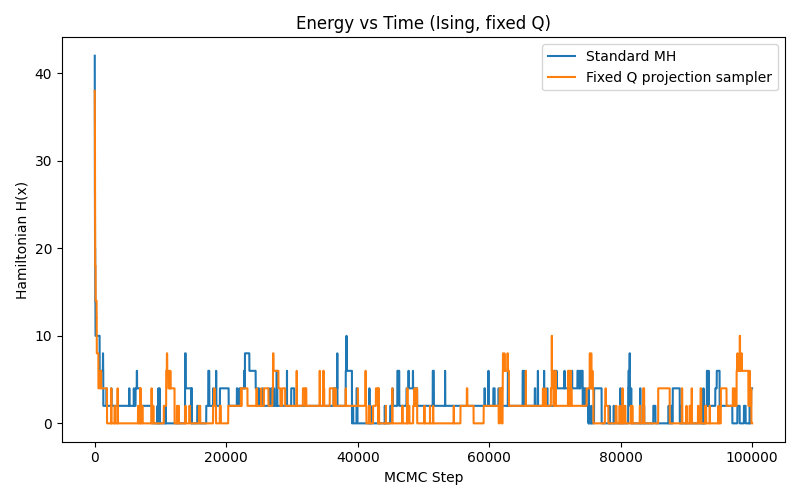}
		\end{subfigure}
		\hfill
		\begin{subfigure}[t]{0.3\textwidth}
			\includegraphics[width=\linewidth]{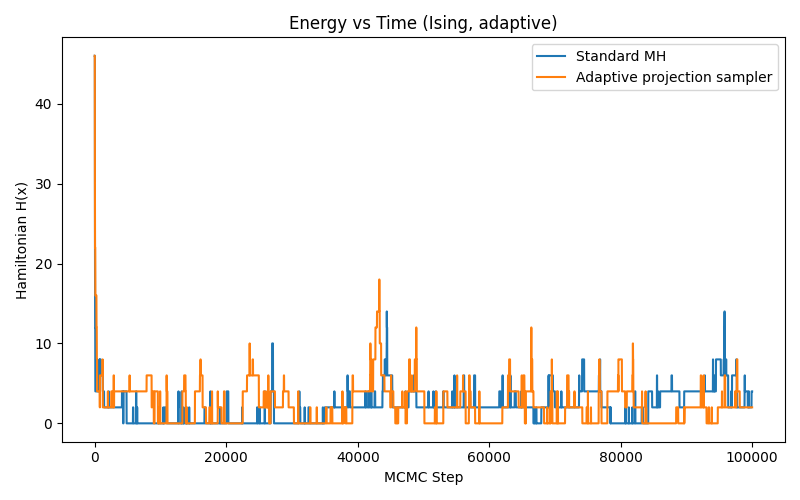}
		\end{subfigure}
		\hfill
		\begin{subfigure}[t]{0.3\textwidth}
			\includegraphics[width=\linewidth]{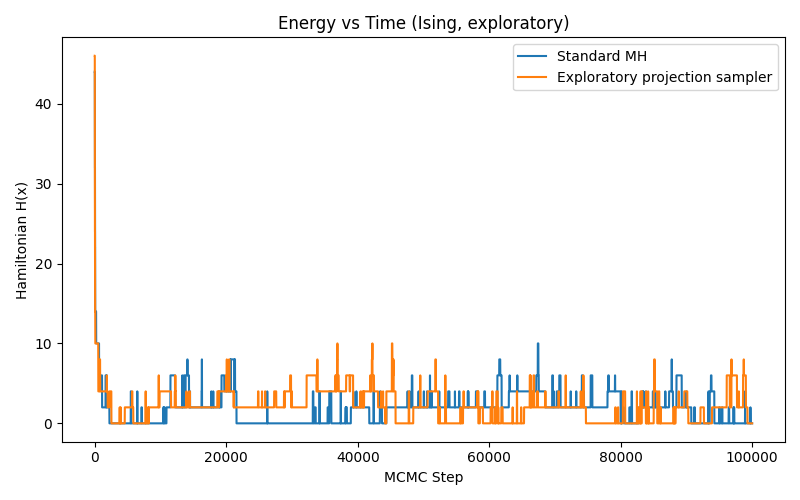}
		\end{subfigure}
		\caption{Hamiltonian function value against time.}
	\end{subfigure}
	
	\caption{Ising model on the line. From left to right are the standard MH (blue) compared with fixed $Q$, adaptive and exploratory projection samplers (orange).}
	\label{fig:9panel_Ising}
\end{figure}

\subsection{EA spin glass}


The results are presented in Figure \ref{fig:9panel_EA} \textcolor{black}{and Table \ref{tab:EA}}. In these simulations, we see that all projection samplers are able to reach a lower Hamiltonian function value than the standard MH. Notably, the adaptive and the exploratory projection sampler are able to take advantage of the permutations \textcolor{black}{as evident in the magnetization against time plot.} \textcolor{black}{These two samplers also have a larger average jump distance than that of standard MH.}

\begin{table}[H]
	\centering
	\scriptsize  
	\begin{tabular}{l|cc|cc|cc}
		\toprule
		& \multicolumn{2}{c|}{Fixed $Q$} & \multicolumn{2}{c|}{Adaptive} & \multicolumn{2}{c}{Exploratory} \\
		& Standard MH & Fixed $Q$ & Standard MH & Adaptive & Standard MH & Exploratory \\
		\midrule
		Sample mean             & 0.12 & 0.12 & 0.07 & -0.01 &-0.08 & -0.08 \\
		95\% CI                 & [0.11,0.12] & [0.12,0.12] & [0.05,0.08] & [-0.03,0.02] & [-0.08,-0.08] & [-0.08,-0.08] \\
		Avg. jump distance      & 0.00012 & 0.00006 & 0.00010 & 0.00015 & 0.00003 & 0.00010 \\
		\bottomrule
	\end{tabular}
	\caption{Summary statistics of magnetization for different samplers in EA spin glass.}
	\label{tab:EA}
\end{table}

\begin{figure}[H]
	\centering
	\begin{subfigure}[t]{\textwidth}
		\centering
		\begin{subfigure}[t]{0.3\textwidth}
			\includegraphics[width=\linewidth]{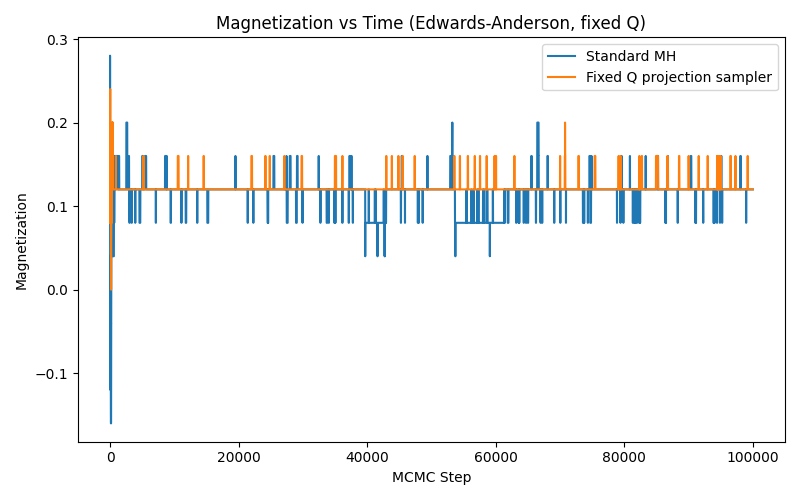}
		\end{subfigure}
		\hfill
		\begin{subfigure}[t]{0.3\textwidth}
			\includegraphics[width=\linewidth]{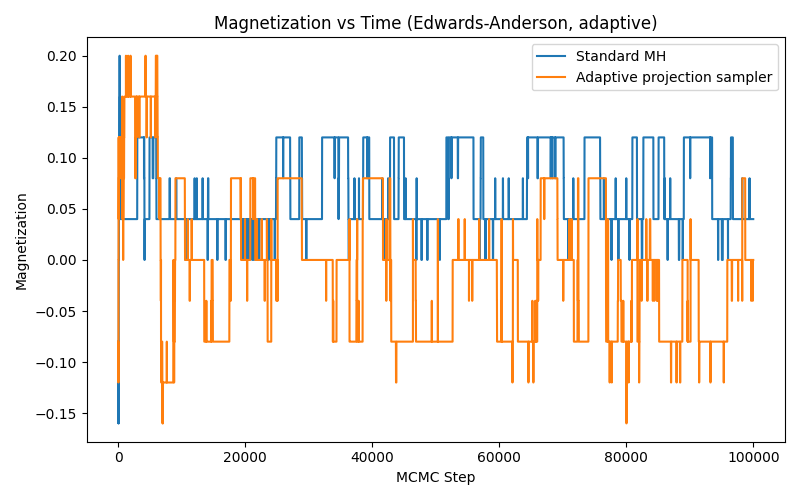}
		\end{subfigure}
		\hfill
		\begin{subfigure}[t]{0.3\textwidth}
			\includegraphics[width=\linewidth]{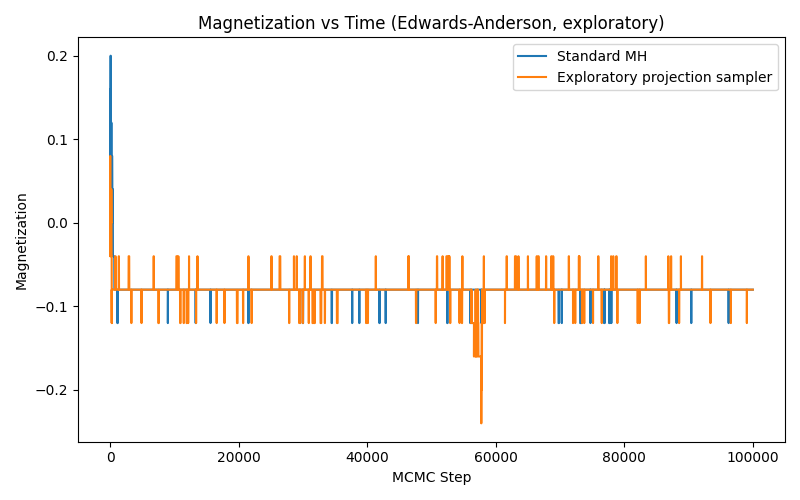}
		\end{subfigure}
		\caption{Magnetization against time.}
	\end{subfigure}
	
	\vspace{1em}
	
	\begin{subfigure}[t]{\textwidth}
		\centering
		\begin{subfigure}[t]{0.3\textwidth}
			\includegraphics[width=\linewidth]{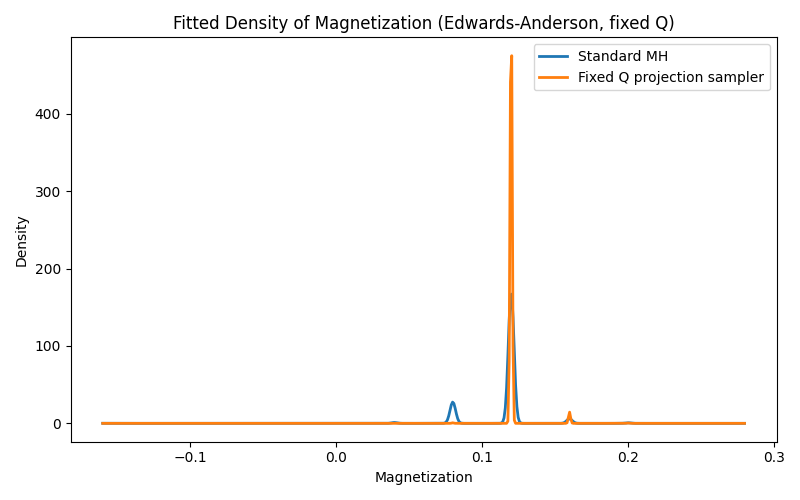}
		\end{subfigure}
		\hfill
		\begin{subfigure}[t]{0.3\textwidth}
			\includegraphics[width=\linewidth]{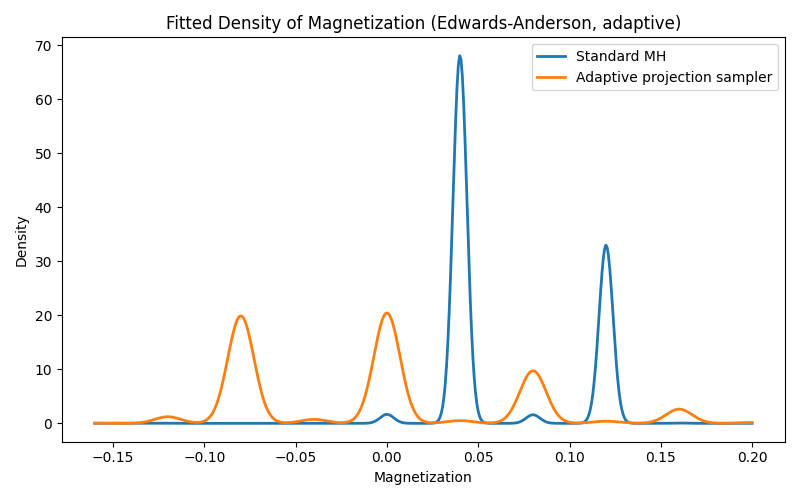}
		\end{subfigure}
		\hfill
		\begin{subfigure}[t]{0.3\textwidth}
			\includegraphics[width=\linewidth]{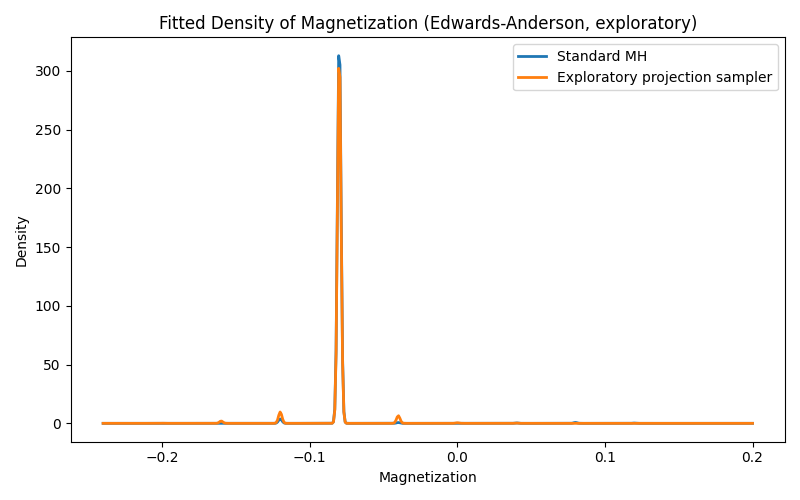}
		\end{subfigure}
		\caption{\textcolor{black}{Fitted density of magnetization.}}
	\end{subfigure}
	
	\vspace{1em}
	
	\begin{subfigure}[t]{\textwidth}
		\centering
		\begin{subfigure}[t]{0.3\textwidth}
			\includegraphics[width=\linewidth]{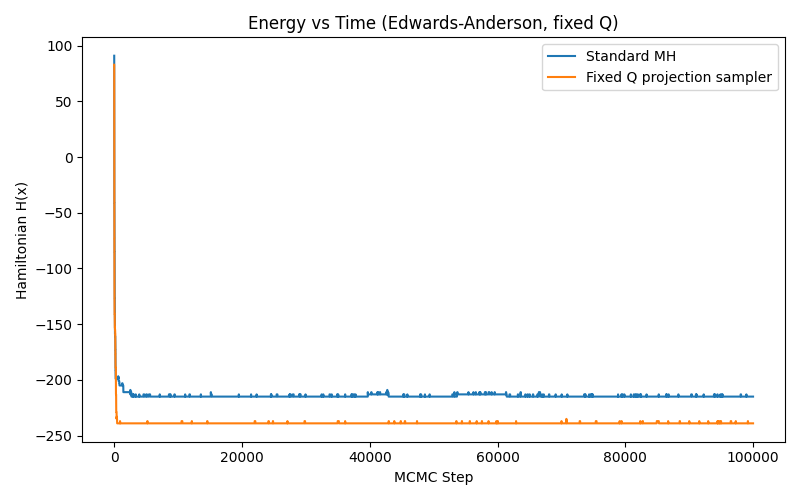}
		\end{subfigure}
		\hfill
		\begin{subfigure}[t]{0.3\textwidth}
			\includegraphics[width=\linewidth]{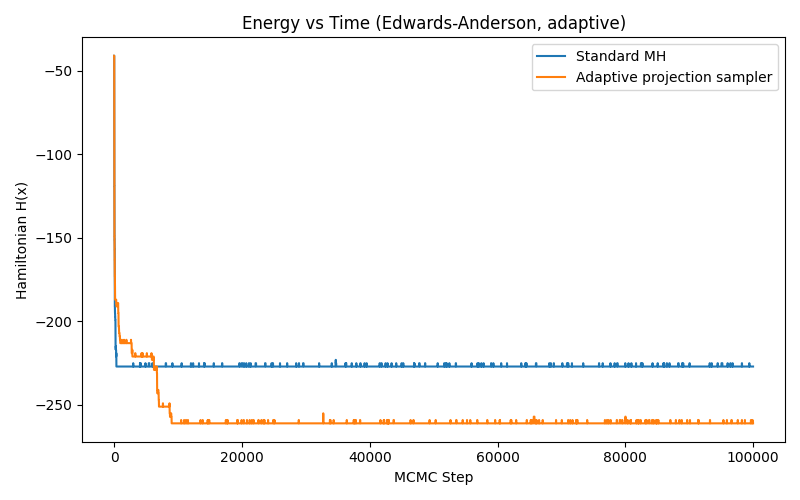}
		\end{subfigure}
		\hfill
		\begin{subfigure}[t]{0.3\textwidth}
			\includegraphics[width=\linewidth]{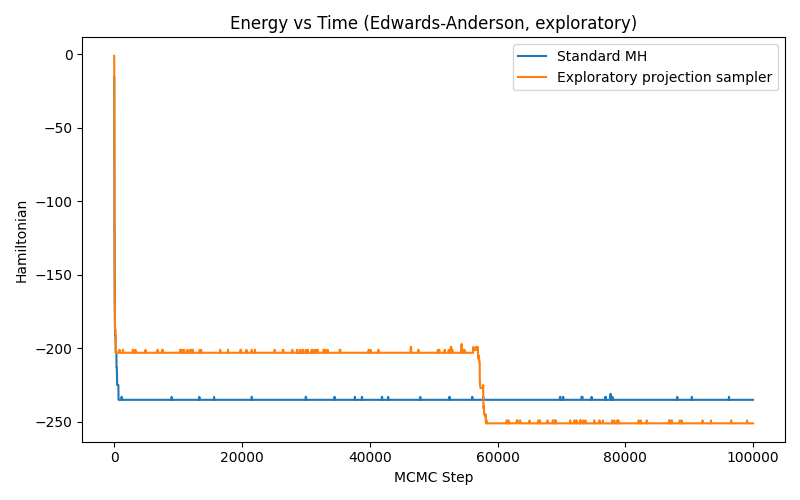}
		\end{subfigure}
		\caption{Hamiltonian function value against time.}
	\end{subfigure}
	
	\caption{EA model. From left to right are the standard MH (blue) compared with fixed $Q$, adaptive and exploratory projection samplers (orange).}
	\label{fig:9panel_EA}
\end{figure}

\subsection{BC model}


The results are presented in Figure \ref{fig:9panel_BC} \textcolor{black}{and Table \ref{tab:BC}}. In these simulations, we see that all projection samplers are able to identify the three modes, \textcolor{black}{particlarly $\mathbf{+1}, \mathbf{-1}$,} while the standard MH may not be able to locate these three. \textcolor{black}{From the table it also reveals improved mixing of projection samplers as evident by closer to $0$ sample mean magnetization in all projection samplers, larger average jump distance in fixed $Q$ and adaptive sampler, and a shorter CI for the adaptive sampler.} \textcolor{black}{We also note that the samplers reach similar Hamiltonian function values, thereby verifying that these are ``noisy descent" algorithms in low temperature.}

\begin{table}[H]
	\centering
	\scriptsize  
	\begin{tabular}{l|cc|cc|cc}
		\toprule
		& \multicolumn{2}{c|}{Fixed $Q$} & \multicolumn{2}{c|}{Adaptive} & \multicolumn{2}{c}{Exploratory} \\
		& Standard MH & Fixed $Q$ & Standard MH & Adaptive & Standard MH & Exploratory \\
		\midrule
		Sample mean             & -0.04 & 0.04 & -0.22 & -0.01 & 0.13 & 0.05 \\
		95\% CI                 & [-0.22,0.14] & [-0.26,0.33] & [-0.51,0.08] & [-0.17,0.14] & [-0.04,0.30] & [-0.24,0.34] \\
		Avg. jump distance      & 0.00142 & 0.00159 & 0.00142 & 0.00166 & 0.00144 & 0.00136 \\
		\bottomrule
	\end{tabular}
	\caption{Summary statistics of magnetization for different samplers in BC model.}
	\label{tab:BC}
\end{table}

\begin{figure}[H]
	\centering
	\begin{subfigure}[t]{\textwidth}
		\centering
		\begin{subfigure}[t]{0.3\textwidth}
			\includegraphics[width=\linewidth]{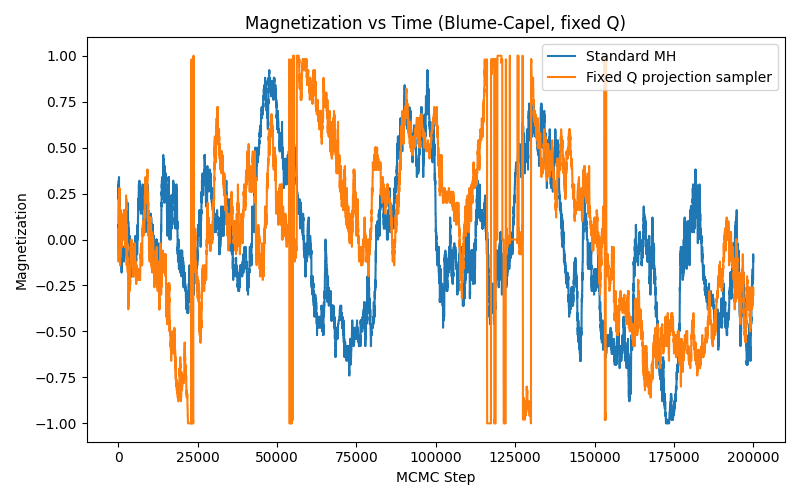}
		\end{subfigure}
		\hfill
		\begin{subfigure}[t]{0.3\textwidth}
			\includegraphics[width=\linewidth]{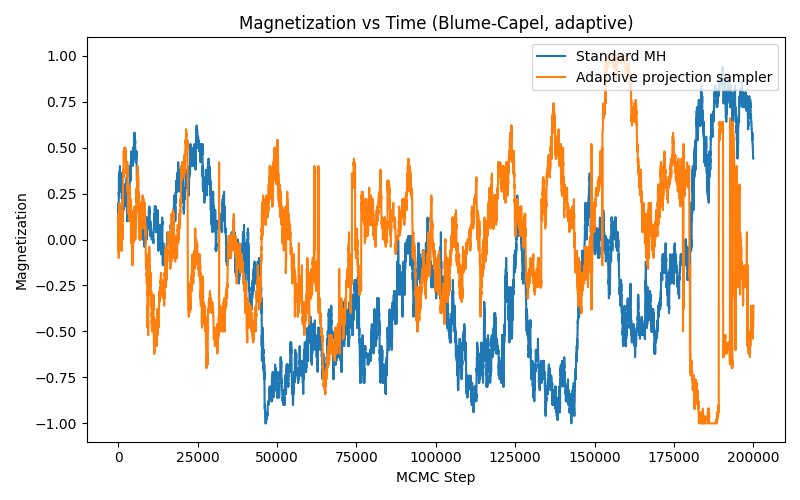}
		\end{subfigure}
		\hfill
		\begin{subfigure}[t]{0.3\textwidth}
			\includegraphics[width=\linewidth]{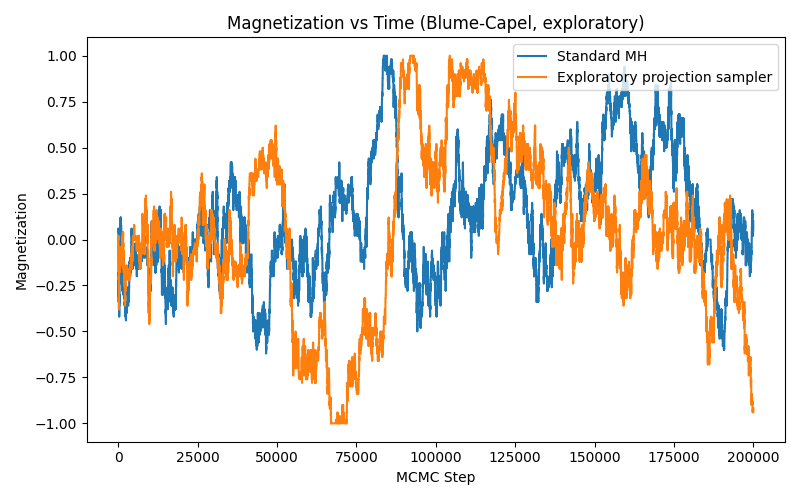}
		\end{subfigure}
		\caption{Magnetization against time.}
	\end{subfigure}
	
	\vspace{1em}
	
	\begin{subfigure}[t]{\textwidth}
		\centering
		\begin{subfigure}[t]{0.3\textwidth}
			\includegraphics[width=\linewidth]{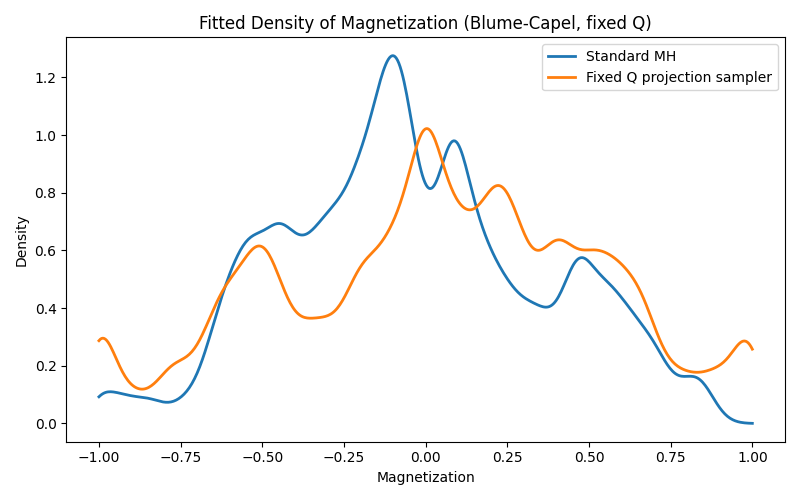}
		\end{subfigure}
		\hfill
		\begin{subfigure}[t]{0.3\textwidth}
			\includegraphics[width=\linewidth]{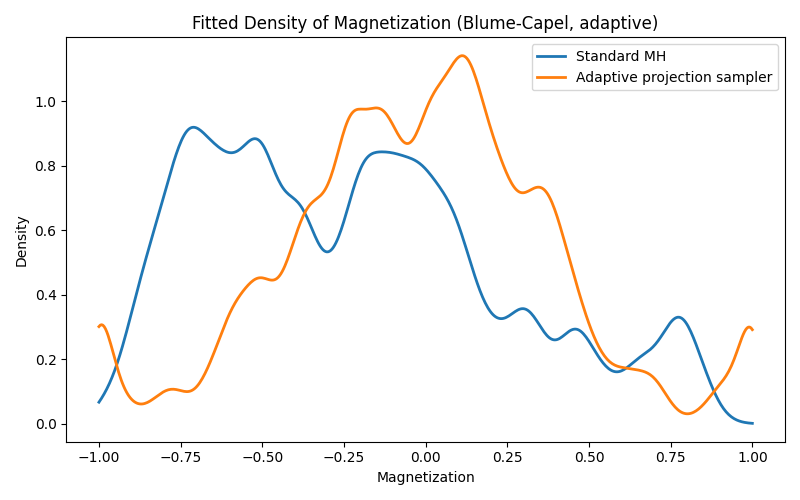}
		\end{subfigure}
		\hfill
		\begin{subfigure}[t]{0.3\textwidth}
			\includegraphics[width=\linewidth]{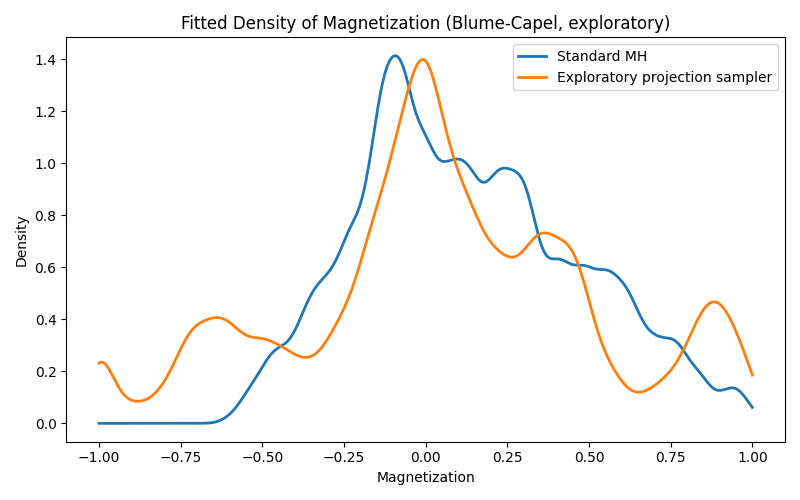}
		\end{subfigure}
		\caption{\textcolor{black}{Fitted density of magnetization.}}
	\end{subfigure}
	
	\vspace{1em}
	
	\begin{subfigure}[t]{\textwidth}
		\centering
		\begin{subfigure}[t]{0.3\textwidth}
			\includegraphics[width=\linewidth]{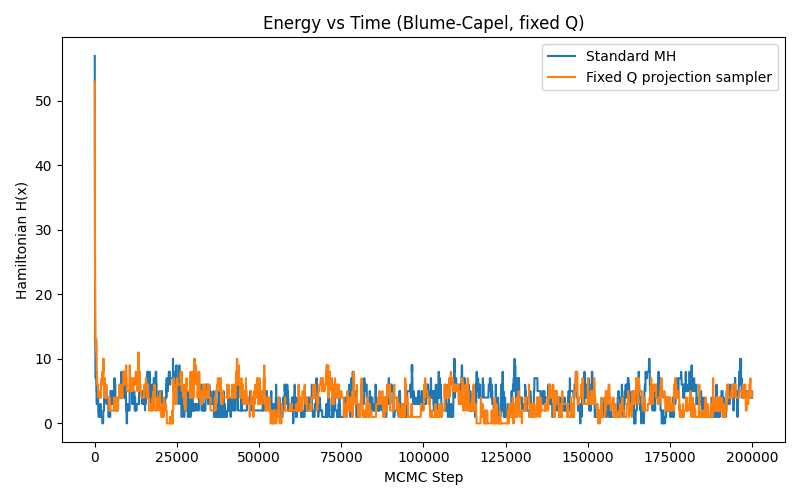}
		\end{subfigure}
		\hfill
		\begin{subfigure}[t]{0.3\textwidth}
			\includegraphics[width=\linewidth]{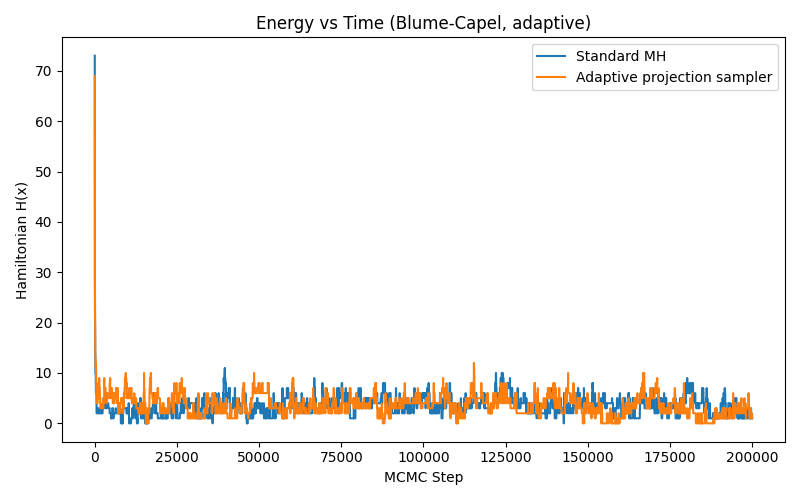}
		\end{subfigure}
		\hfill
		\begin{subfigure}[t]{0.3\textwidth}
			\includegraphics[width=\linewidth]{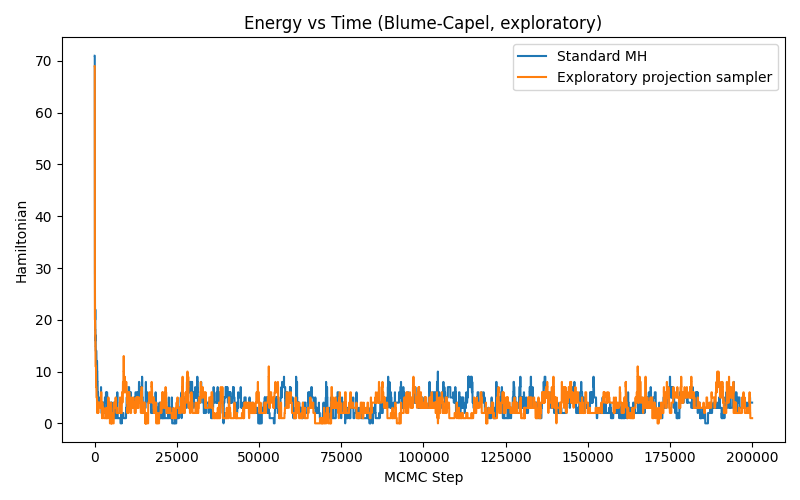}
		\end{subfigure}
		\caption{Hamiltonian function value against time.}
	\end{subfigure}
	
	\caption{BC model. From left to right are the standard MH (blue) compared with fixed $Q$, adaptive and exploratory projection samplers (orange).}
	\label{fig:9panel_BC}
\end{figure}

\section{Proofs of results}\label{sec:proof}

This section contains the proofs of various results that appear earlier in Section \ref{sec:basicdef}, \ref{sec:compare} and \ref{sec:MH} which involve standard techniques from linear algebra, probability and information theory.

\subsection{Proof of Proposition \ref{prop:deformKLprop}}

First, we prove non-negativity. 
\begin{align*}
	{}^Q D_{KL}(P \| L) = D^\pi_{KL}(QP \| QL)\geq 0,
\end{align*}
and the equality holds, by \cite[Proposition $3.1$]{WC23}, if and only if $QP = QL$ if and only if $P = L$. The proof for $D_{KL}^Q$ is similar and hence omitted.
	
Next, we prove duality. We see that,
\begin{align*}
	{}^Q D_{KL}(P \| L) = D^\pi_{KL}(QP \| QL) =  D^\pi_{KL}(P^* Q \| L^* Q) = D_{KL}^Q(P^* \| L^*),
\end{align*}
where the second equality follows from the bisection property \cite[Theorem $III.1$]{CW23}.

\subsection{Proof of Proposition \ref{prop:basic}}

	First, we prove \eqref{eq:QPyth}. It is easy to see that
	\begin{align*}
		{}^Q D_{KL}(P \| L) &= {}^Q D_{KL}(P \| \overline{P}) + \sum_x \pi(x) \sum_y QP(x,y) \ln \left(\dfrac{Q\overline{P}(x,y)}{QL(x,y)}\right),
	\end{align*}
	thus it suffices to show that the second term on the right hand side can be expressed as
	\begin{align}\label{eq:QPyth2}
		\sum_x \pi(x) \sum_y QP(x,y) \ln \left(\dfrac{Q\overline{P}(x,y)}{QL(x,y)}\right) = \sum_x \pi(x) \sum_y P^*Q(x,y) \ln \left(\dfrac{Q\overline{P}(x,y)}{QL(x,y)}\right).
	\end{align}
	To see \eqref{eq:QPyth2}, we compute that
	\begin{align*}
		\sum_x \sum_y \pi(x) QP(x,y) \ln \left(\dfrac{Q\overline{P}(x,y)}{QL(x,y)}\right) = \sum_x \sum_y \pi(y) P^* Q(y,x) \ln \left(\dfrac{Q\overline{P}(y,x)}{QL(y,x)}\right),
	\end{align*}
	where the equality uses the fact that $L, \overline{P} \in \mathcal{L}(\pi,Q)$.
	
	Next, we prove \eqref{eq:PythQ}. Applying \eqref{eq:QPyth} to $(P^*,L^*)$ and using the duality formula in Proposition \ref{prop:basic}, we note that
	\begin{align*}
		D_{KL}^Q(P \| L) &= {}^Q D_{KL}(P^* \| L^*) \\
		&= {}^Q D_{KL}(P^* \| \overline{P^*}) + {}^Q D_{KL}(\overline{P^*} \| L^*) \\
		&= D_{KL}^Q(P \| \overline{P}) +  D_{KL}^Q(\overline{P} \| L),
	\end{align*}
	where we use that $\overline{P^*}^* = \overline{P}$.

\subsection{Proof of Proposition \ref{prop:projectFrob}}

	\begin{align*}
		\norm{M - N}_F^2 &= \langle M - N, M - N \rangle_F \\
		&= \bigg\langle \dfrac{M - QMQ}{2} + \dfrac{M + QMQ}{2} - N, \dfrac{M - QMQ}{2} + \dfrac{M + QMQ}{2} - N \bigg\rangle_F \\
		&= \norm{M - \overline{M}(Q)}_F^2 + \norm{\overline{M}(Q) - N}_F^2 + 2 \bigg\langle \dfrac{M - QMQ}{2}, \dfrac{M + QMQ}{2} - N \bigg\rangle_F,
	\end{align*}
	and it suffices to show that the rightmost inner product equals to zero, that is,
	\begin{align*}
		\bigg\langle \dfrac{M - QMQ}{2}, \dfrac{M + QMQ}{2} - N\bigg\rangle_F = \mathrm{Tr}\left(\left(\dfrac{M - QMQ}{2}\right)^*\left(\dfrac{M + QMQ}{2} - N\right)\right) = 0.
	\end{align*}
	
	To see that, we shall prove that $\mathrm{Tr}(A) = \mathrm{Tr}(-A)$ with $A = \left(\frac{M - QMQ}{2}\right)^*\left(\frac{M + QMQ}{2} - N\right)$. We calculate that
	\begin{align*}
		\mathrm{Tr}\left(\left(\dfrac{M - QMQ}{2}\right)^*\left(\dfrac{M + QMQ}{2} - N\right)\right)
		&= \mathrm{Tr}\left(\left(\dfrac{M + QMQ}{2} - N\right)^*\left(\dfrac{M - QMQ}{2}\right)\right) \\
		&= \mathrm{Tr}\left(Q^2\left(\dfrac{M^* + QM^*Q}{2} - N^*\right)\left(\dfrac{M - QMQ}{2}\right)\right) \\
		&= \mathrm{Tr}\left(Q\left(\dfrac{M^* + QM^*Q}{2} - N^*\right)\left(\dfrac{M - QMQ}{2}\right)Q\right) \\ 
		&= \mathrm{Tr}\left(\left(\dfrac{QM^* + M^*Q}{2} - QN^*\right)\left(\dfrac{MQ - QM}{2}\right)\right) \\ 
		&= \mathrm{Tr}\left(\left(\dfrac{QM^*Q + M^*}{2} - N^*\right)\left(\dfrac{QMQ - M}{2}\right)\right) \\
		&= \mathrm{Tr}\left(\left(\dfrac{QMQ - M}{2}\right)^*\left(\dfrac{QMQ + M}{2} - N\right)\right),
	\end{align*}
	where the third equality follows from the cyclic property of the trace and the fifth equality makes use of $N^* = QN^*Q$. This completes the proof.

\subsection{Proof of Proposition \ref{prop:deformKLorigKL}}

	If $M = N$, then the equalities obviously hold and the values are all zeros. For $M \neq N$, we note that
	\begin{align*}
		D^\pi_{KL}(M \| N) &= D^\pi_{KL}((MQ)Q \| (NQ)Q) \\
		&\leq c_{KL}(Q) D^\pi_{KL}(MQ \| NQ) \\
		&\leq D^\pi_{KL}(MQ \| NQ) \\
		&\leq c_{KL}(Q) D^\pi_{KL}(M \| N) \leq D^\pi_{KL}(M \| N).
	\end{align*}
	The equalities hold and hence $D^\pi_{KL}(MQ \| NQ) = D^\pi_{KL}(M \| N)$.
	
	Using the duality in Proposition \ref{prop:deformKLprop} and the bisection property \cite[Theorem $III.1$]{CW23}, we note that
	\begin{align*}
		{}^Q D_{KL}(M \| N) = D^Q_{KL}(M^* \| N^*) = D^\pi_{KL}(M^* \| N^*) = D^\pi_{KL}(M \| N).
	\end{align*}

\subsection{Proof of Proposition \ref{prop:converge}}

\eqref{eq:compare} can readily be seen from \eqref{prop:deformKLorigKL}. We replace $P$ above in this proof by $QP$ to yield the rightmost equality of \eqref{eq:compare}.
	
Next, we prove \eqref{eq:compare3}. Using submultiplicativity of $c_{KL}$ \cite[Proposition $3.9$]{WC23}, we see that
$$c_{KL}(P) = c_{KL}((PQ)Q) \leq c_{KL}(PQ) c_{KL}(Q) \leq c_{KL}(PQ).$$
Replacing $P$ by $PQ$ in the equations above yields
$$c_{KL}(PQ) \leq c_{KL}(PQ^2) = c_{KL}(P).$$
	
Similarly, $c_{KL}(P) = c_{KL}(QP)$ can be shown. Replacing $P$ by $QP$ in the expressions above leads us to $c_{KL}(QP) = c_{KL}(QPQ)$.
	
Finally, we prove \eqref{eq:compare4}, which follows from the linearity and cyclic property of trace, $Q^2 = I$ and $\mathrm{Tr}(P) = \mathrm{Tr}(P^*)$.

\subsection{Proof of Proposition \ref{prop:converge2}}

	By taking $L = \Pi$ in Proposition \ref{prop:basic}, we see that 
	\begin{align*}
		D_{KL}^Q(P \| \Pi) &= D_{KL}^Q(P \| \overline{P}) +  D_{KL}^Q(\overline{P} \| \Pi)
	\end{align*}
	In view of Proposition \ref{prop:deformKLorigKL} and \ref{prop:converge}, we arrive at
	\begin{align*}
		D^{\pi}_{KL}(P \| \Pi) &= D_{KL}^Q(P \| \Pi) \\
		&= D_{KL}^Q(P \| \overline{P}) +  D_{KL}^Q(\overline{P} \| \Pi) \\
		&= D^{\pi}_{KL}(P Q \| \overline{P} Q) +  D^{\pi}_{KL}(\overline{P} Q \| \Pi) \\
		&=  D^{\pi}_{KL}(P  \| \overline{P} ) +  D^{\pi}_{KL}(\overline{P}  \| \Pi) \\
		&\geq D^{\pi}_{KL}(\overline{P} \| \Pi),
	\end{align*}
	where the equality holds if and only if $P = \overline{P}$ if and only if $P$ is itself $(\pi,Q)$-self-adjoint. 
	
	Using the convexity of $c_{KL}$ \cite[Proposition $3.9$]{WC23}, we see that
	\begin{align*}
		c_{KL}(\overline{P}) \leq \dfrac{1}{2}(c_{KL}(P) + c_{KL}(QPQ)) = c_{KL}(P),
	\end{align*}
	which the last equality follows from Proposition \ref{prop:converge}.

\subsection{Proof of Proposition \ref{prop:spectralspeedup}}

	This proposition is mainly a consequence of the Weyl's inequality \cite[Theorem $1.3$]{S94}. Specifically, in view of Proposition \ref{prop:similar}, we note that
	\begin{align*}
		\lambda_2(\overline{P}_{\alpha}(Q)) = \lambda_1(\overline{P}_{\alpha}(Q) - \Pi) \leq \lambda_1(\alpha(P - \Pi)) + \lambda_1((1-\alpha)(QPQ-\Pi)) = \lambda_2(P),
	\end{align*}
	where the equality holds in the Weyl's inequality if and only if there exists a common eigenvector $f$.
	
	Similarly, applying the Weyl's inequality again leads to
	\begin{align*}
		\lambda_n(\overline{P}_{\alpha}(Q)) \geq \lambda_n(\alpha P) + \lambda_n((1-\alpha)QPQ) = \lambda_n(P),
	\end{align*}
	where the equality holds in the Weyl's inequality if and only if there exists a common eigenvector $g$.
	
	In the positive-semi-definite setting, we note that by Weyl's inequality
	\begin{align*}
		\alpha \lambda_2(P) \leq \alpha \lambda_2(P) + (1-\alpha) \lambda_n(QPQ) &\leq \lambda_2(\overline{P}_{\alpha}(Q)), \\
		(1-\alpha) \lambda_2(QPQ) \leq (1-\alpha) \lambda_2(QPQ) + \alpha \lambda_n(P) &\leq \lambda_2(\overline{P}_{\alpha}(Q)),
	\end{align*}
	and the desired result follows from Proposition \ref{prop:similar}.

\subsection{Proof of Proposition \ref{prop:compareasympv}}

	First, we calculate that
	\begin{align*}
		4 \langle f,&g \rangle_\pi - 2 \langle (I-\overline{P}_\alpha(Q)) g, g \rangle_\pi - \langle f,f \rangle_\pi \\
		&= \alpha \left(4 \langle f,g \rangle_\pi - 2 \langle (I-P) g, g \rangle_\pi - \langle f,f \rangle_\pi\right) + (1-\alpha) \left(4 \langle f,g \rangle_\pi - 2 \langle (I-QPQ) g, g \rangle_\pi - \langle f,f \rangle_\pi\right).
	\end{align*}
	Taking the sup over $g \in \ell^2_0(\pi)$ and using \eqref{eq:asympvvar} leads to
	\begin{align*}
		v(f,\overline{P}_\alpha(Q)) &\leq \alpha v(f,P) + (1-\alpha) v(f,QPQ) \\
		&= \alpha v(f,P) + (1-\alpha) v(Qf,P),
	\end{align*}
	where the last equality follows from \eqref{eq:varfvarqf}.
	
	Finally, when $Qf = \pm f$ and $P$ is $\pi$-reversible, by replacing $P$ with $\overline{P}_\alpha(Q)$ above and recalling \eqref{eq:Pbarbar} earlier, we arrive at 
	\begin{align*}
		\min_{\alpha \in [0,1]} v(f,\overline{P}_\alpha(Q)) &= v(f,\overline{P}(Q)).
	\end{align*}

\subsection{Proof of Proposition \ref{prop:asympvarspeedup}}

	We first handle the results for worst-case asymptotic variance. Using \eqref{eq:worstasympvar}, Proposition \ref{prop:spectralspeedup} and the fact that the mapping $0 < c \mapsto \frac{1+c}{1-c}$ is strictly increasing, we have
	\begin{align*}
		V(\overline{P}_\alpha(Q)) = \dfrac{1+\lambda_2(\overline{P}_\alpha(Q))}{1-\lambda_2(\overline{P}_\alpha(Q))} \leq \dfrac{1 + \lambda_2(P)}{1-\lambda_2(P)} = V(P), 
	\end{align*}
	and so the equality holds if and only if $\lambda_2(\overline{P}_\alpha(Q)) = \lambda_2(P)$. If $P$ is positive-semi-definite, using the inequality that, for $a \in (0,1)$ and $c > 0$,
	$$a\dfrac{1+c}{1-c} \leq \dfrac{1+ac}{1-ac},$$
	we arrive at, by taking $a = \max\{\alpha,1-\alpha\}$,
	\begin{align*}
		\max\{\alpha,1-\alpha\} V(P) &\leq \dfrac{1+\max\{\alpha,1-\alpha\}\lambda_2(P)}{1-\max\{\alpha,1-\alpha\}\lambda_2(P)} \leq \dfrac{1+\lambda_2(\overline{P}_\alpha(Q))}{1-\lambda_2(\overline{P}_\alpha(Q))} = V(\overline{P}_\alpha(Q)),
	\end{align*}
	where the second inequality above follows from Proposition \ref{prop:spectralspeedup}.
	
	Next, we proceed to show the results for average-case asymptotic variance. In view of Proposition \ref{prop:compareasympv}, it suffices to show that
	\begin{align*}
		\overline{v}(P) = \int_{f \in \ell^2_0(\pi), \norm{f}_{\pi} = 1} v(Qf,P) d S(f),
	\end{align*}
	which is indeed true since
	\begin{align*}
		\int_{f \in \ell^2_0(\pi), \norm{f}_{\pi} = 1} v(Qf,P) d S(f) &= \int_{f \in \ell^2_0(\pi), \norm{f}_{\pi} = 1} v(f,QPQ) d S(f) \\
		&= \dfrac{2}{|\mathcal{X}|-1}\mathrm{Tr}(Z(QPQ)) - 1 \\
		&= \dfrac{2}{|\mathcal{X}|-1}\mathrm{Tr}(Z(P)) - 1 \\
		&= \overline{v}(P),
	\end{align*}
	where the first equality uses \eqref{eq:varfvarqf}, the second equality comes from \cite[Theorem $2.1$]{CCHP12}, the third equality uses $\lambda(P) = \lambda(QPQ)$, and the last equality uses again \cite[Theorem $2.1$]{CCHP12}.
	
	Finally, the optimality of $\alpha = 1/2$ can be seen by replacing $P$ with $\overline{P}_\alpha(Q)$ above and recalling \eqref{eq:Pbarbar} earlier.

\subsection{Proof of Proposition \ref{prop:criticalh}}

	We show the first item. Using Proposition \ref{prop:similar}, we have
	\begin{align*}
		\gamma(P_\beta) = \gamma(Q P_\beta Q).
	\end{align*}
	Now, if we consider \eqref{eq:HSspectral} and since $QP_\beta Q = P_\beta(QNQ,H)$, we have
	\begin{align*}
		h(P_\beta) &= -\lim_{\beta \to \infty} \dfrac{1}{\beta} \ln (\gamma(P_\beta)) \\
		&= -\lim_{\beta \to \infty} \dfrac{1}{\beta} \ln (\gamma(QP_\beta Q)) \\
		&= h(Q P_\beta Q).
	\end{align*}
	
	We proceed to prove the second item and it suffices to show for $\alpha \in (0,1)$ in view of the first item. By Proposition \ref{prop:spectralspeedup}, we see that
	\begin{align*}
		\gamma(\overline{(P_\beta)}_\alpha(Q)) \geq \gamma(P_\beta),
	\end{align*}
	and using \eqref{eq:HSspectral} and \eqref{eq:Pbetaalpha} again yield
	\begin{align*}
		h(P_\beta) &= -\lim_{\beta \to \infty} \dfrac{1}{\beta} \ln (\gamma(P_\beta)) \\
		&\geq -\lim_{\beta \to \infty} \dfrac{1}{\beta} \ln (\gamma(\overline{(P_\beta)}_\alpha(Q))) \\
		&= h(\overline{(P_\beta)}_\alpha(Q)).
	\end{align*}
	
	To demonstrate the optimality of $\alpha = 1/2$, we note that
	\begin{align*}
		\overline{(P_\beta)}_\alpha(Q) = P_\beta(\alpha N + (1-\alpha)QNQ,H),
	\end{align*}
	and applying the result above with $P_\beta$ replaced by $\overline{(P_\beta)}_\alpha(Q)$ leads to
	\begin{align*}
		h(\overline{(P_\beta)}_\alpha(Q)) \geq h(\overline{\overline{(P_\beta)}_\alpha(Q)}_{1/2}(Q)) = h(\overline{(P_\beta)}_{1/2}(Q)).
	\end{align*}

\section*{Acknowledgements}

 We thank Persi Diaconis for pointer to the work \cite{DKSC10} where alternating projections are used to analyze Gibbs sampler, and Zheyuan Lai for coding assistance. We are grateful to the two reviewers for careful reading and constructive comments. Michael Choi acknowledges the financial support of the project “MAPLE: Mechanistic Accelerated Prediction of Protein Secondary Structure via LangEvin Monte Carlo” with grant number 22-5715-P0001 under the NUS Faculty of Science Ministry of Education Tier 1 grant Data for Science and Science for Data collaborative scheme, project NUSREC-HPC-00001 and NUSREC-CLD-00001 for NUS HPC-AI Priority Projects for Research Program, as well as the startup funding of the National University of Singapore with grant number A-0000178-01-00. Max Hird is funded by an EPSRC DTP. Youjia Wang gratefully acknowledges the financial support from National University of Singapore via the Presidential Graduate Fellowship. 

\section{Appendix}\label{sec:appendix}

\subsection{Experimental protocol}\label{subsec:protocol}

{\color{black}In this subsection, we state clearly the experimental protocol for the experiments presented in Section \ref{sec:numerical}.
	
	\textbf{Models}.} We first give an overview of the models of interest:

\begin{itemize}
	\item Ising model on the line: the state space is $\mathcal{X} = \{-1,+1\}^d$, and the Hamiltonian is given by, for $\mathbf{x} = (x(1),\ldots,x(d)) \in \mathcal{X}$,
	\begin{align*}
		H(\mathbf{x}) = \sum_{i=1}^{d-1} (1 - x(i) x(i+1)).
	\end{align*}
	The global minima are $\mathbf{+1}$ and $\mathbf{-1}$, the all-ones and all-minus\textcolor{black}{-ones} configuration.
	
	\item EA spin glass: the state space is $\mathcal{X} = \{-1,+1\}^d$, and the Hamiltonian is given by, for $\mathbf{x} = (x(1),\ldots,x(d)) \in \mathcal{X}$,
	\begin{align*}
		H(\mathbf{x}) = -\sum_{i < j} J_{i,j} x(i) x(j),
	\end{align*}
	where $(J_{i.j})$ are i.i.d. coupling coefficients taking values on either $+1$ or $-1$ with equal probability. Unlike the Ising model on the line, the landscape of $H$ is random and the global minima may not be $\mathbf{+1}$ or $\mathbf{-1}$.
	
	\item BC model on the line (with zero chemical potential and zero external field): the state space is $\mathcal{X} = \{-1,0,+1\}^d$, and the Hamiltonian is given by, for $\mathbf{x} = (x(1),\ldots,x(d)) \in \mathcal{X}$,
	\begin{align*}
		H(\mathbf{x}) = \sum_{i=1}^{d-1} (x(i) - x(i+1))^2.
	\end{align*}
	The global minima are $\mathbf{+1}$, $\mathbf{0}$ and $\mathbf{-1}$.
\end{itemize}

{\color{black}\textbf{Samplers}.} We compare the following four samplers:

\begin{itemize}
	\item Standard Metropolis-Hastings (MH) $P_\beta(N,H)$: this serves as the baseline sampler for comparison. The proposal chain that we use is described as follows: one uniformly-at-random coordinate is selected out of the $d$ coordinates, and the spin of that coordinate is uniformly randomized to a value excluding the current spin. For instance, if the current spin is $-1$ in either the Ising or EA model, then it is flipped to $+1$. In the BC model, it is flipped to either $0$ or $+1$ with equal probability.
	
	\item Fixed $Q$ projection sampler $\overline{P_\beta}(Q)$: we leverage the symmetric structure of $\mathcal{X}$ in the above models to see that a natural $Q_\sigma$ in this context is given by $\sigma(\mathbf{x}) = -\mathbf{x}$ for all $\mathbf{x} \neq \mathbf{+1}, \mathbf{-1}$ and $\sigma(\mathbf{x}) = \mathbf{x}$ for $\mathbf{x} \in\{\mathbf{+1}, \mathbf{-1}\}$. In both the Ising and the EA model, we shall be using $\overline{P_\beta}(Q_\sigma)$.
	
	In the BC model, as there are three modes of the Gibbs distribution, we shall consider in addition the permutation $\psi(\mathbf{+1}) = \mathbf{0}$, $\psi(\mathbf{x}) = \mathbf{x}$ for $x \neq \mathbf{+1},\mathbf{0}$, and the resulting projection sampler
	\begin{align*}
		R_2 = \dfrac{1}{4}\left(P_\beta + Q_\sigma P_\beta Q_\sigma + Q_\psi P_\beta Q_\psi + Q_\psi Q_\sigma P_\beta Q_\sigma Q_\psi \right).
	\end{align*}
	
	\item Adaptive $Q$ projection sampler: Starting from the identity matrix as permutation matrix, this sampler adaptively updates the equi-probability permutation matrix on the fly in every 50 steps as described in Section \ref{subsec:tuneQsingle}.
	
	\item Exploratory $Q$ projection sampler: we first run a MH chain $P_{\beta_e}$ at a high temperature (i.e. small $\beta_e$) to explore the given landscape $H$. This results in a $Q$ as described in Section \ref{subsec:tuneQexplore}. Consequently, we use such $Q$ in the projection sampler $\overline{P_\beta}(Q)$ at the target $\beta$.
\end{itemize}

{\color{black}\textbf{Initial configuration}. For all experiments, the initial configuration of the samplers is drawn uniformly at random from the state space. In each experiment, we use the same initial state for both standard and projection samplers, to ensure fair comparison between these two.}

{\color{black}\textbf{Parameters and number of MCMC steps}. The parameters of the experiments are described as follows. 
	\begin{itemize}
		\item Ising model on the line: the target inverse temperature is $\beta = 2$, and the dimensionality is $d = 50$. For the adaptive sampler, the permutation matrix $Q$ is adaptively updated in every $50$ steps. For the exploratory projection sampler, the exploratory chain is run at a high temperature with $\beta_e = 0.1$ for $100,000$ steps. The samplers are simulated for a total of $100,000$ steps.
		
		\item EA spin glass: the target inverse temperature is $\beta = 2$, and the dimensionality is $d = 50$. For the adaptive sampler, the permutation matrix $Q$ is adaptively updated in every $50$ steps. For the exploratory projection sampler, the exploratory chain is run at a high temperature with $\beta_e = 0.1$ for $100,000$ steps. For each set of experiments, the samplers are given the same set of realizations of $(J_{i,j})$. The samplers are simulated for a total of $100,000$ steps.
		
		\item BC model on the line: the target inverse temperature is $\beta = 3$, and the dimensionality is $d = 50$. For the adaptive sampler, the permutation matrix $Q$ is adaptively updated in every $50$ steps. For the exploratory projection sampler, the exploratory chain is ran at a high temperature with $\beta_e = 0.1$ for $200,000$ steps. The samplers are simulated for a total of $200,000$ steps.
\end{itemize}}

{\color{black}\textbf{Convergence metrics and diagnostics}.} We assess convergence using the following metrics and diagnostics:

\begin{itemize}
	\item Magnetization against time. The magnetization of a configuration $\mathbf{x} \in \mathcal{X}$ is defined to be the average of the $d$ coordinates, that is,
	\begin{align*}
		\dfrac{1}{d} \sum_{i=1}^d x(i).
	\end{align*}
	
	\item \textcolor{black}{Fitted density curve of magnetization. Based on the histograms of magnetization, we fit and plot the density curves. For the Ising and BC model, we expect the fitted density curves to concentrate on the known modes in these models. }
	
	\textcolor{black}{\item Sample mean and 95\% confidence interval of magnetization. For the Ising and BC model, for good mixing of the samplers we expect the sample mean and confidence interval of magnetization to be concentrated around $0$ in low temperature.}
	
	\item Hamiltonian value $H$ against time. \textcolor{black}{As the temperature in the experiments is low, we expect the Hamiltonian value to decrease over time, reflecting that the samplers can be understood as ``noisy descent" type algorithms.}
	
	\textcolor{black}{\item Average jump distance of magnetization. We expect the projection samplers to have larger jump distance of magnetization compared with the standard MH. Note that the average jump distance of magnetization is defined to be
		\begin{align*}
			\dfrac{1}{T-1}\sum_{t=1}^{T-1} |m_{t+1} - m_t|,
		\end{align*}
		where $m_t$ is the magnetization at time $t$ and $T$ is the number of MCMC steps.}
\end{itemize}

Three set of experiments are conducted \textcolor{black}{for each model}. In each set, the standard MH serves as the baseline and is compared with the three projection samplers.

\subsection{List of commonly used notations}

 The following Table \ref{Table:summary} summarizes some frequently used notations throughout the paper.
\begin{table}[H]
	\begin{tabular}{cc}
		\toprule
		\textbf{Symbol} & \textbf{Meaning} \\
		\midrule
		$\mathcal{X}$ & Finite state space \\
		$\mathcal{L} = \mathcal{L}(\mathcal{X})$ & Set of transition matrices on $\mathcal{X}$ \\
		$\mathcal{P}(\mathcal{X})$ & Set of probability masses with full support on $\mathcal{X}$ \\
		$\llbracket m,n \rrbracket$ & $\{m, m+1, \dots, n\}$ for $m \leq n$ and $m,n \in \mathbb{Z}$\\
		$\llbracket n \rrbracket$ & $\llbracket 1,n \rrbracket$ for $n \in \mathbb{N}$\\
		$\ell^2(\pi)$ & $\pi$-weighted $\ell^2$ Hilbert space with inner product $\langle f,g \rangle_\pi = \sum_x f(x)g(x)\pi(x)$ \\
		$\ell^2_0(\pi)$ & $\{f \in \ell^2(\pi);~ \pi(f) = 0\}$ \\
		$\mathcal{S}(\pi)$ & Set of $\pi$-stationary transition matrices on $\mathcal{X}$ \\
		$\mathcal{L}(\pi)$ & Set of $\pi$-reversible transition matrices on $\mathcal{X}$ \\
		$P^*$ & $\ell^2(\pi)$-adjoint of $P$ \\
		$\Pi$ & Transition matrix where each row equals to $\pi \in \mathcal{P}(\mathcal{X})$ \\
		$\mathcal{I}(\pi)$ & Set of isometric involution matrices with respect to $\pi$ on $\mathcal{X}$ \\
		$\mathcal{L}(\pi, Q)$ & Set of $(\pi, Q)$-self-adjoint transition matrices: $L^* = QLQ$ for $L \in \mathcal{L}(\pi,Q)$ \\
		$\mathbf{P}$ & Set of permutations on $\mathcal{X}$ \\
		$\Psi(\pi)$ & Set of involution and equi-probability permutations with respect to $\pi$ \\
		$Q_\psi$ & Permutation matrix induced by $\psi \in \mathbf{P}$: $Q_\psi(x,y) = \delta_{y = \psi(x)}$ for all $x,y \in \mathcal{X}$ \\
		$D^\pi_{KL}(P \| L)$ & $\pi$-weighted KL divergence from $L$ to $P$ \\
		${}^Q D_{KL}(P \| L)$ & $Q$-left-deformed KL divergence from $L$ to $P$: $D_{KL}^\pi(QP \| QL)$ \\
		$D_{KL}^Q(P \| L)$ & $Q$-right-deformed KL divergence from $L$ to $P$: $D_{KL}^\pi(PQ \| LQ)$ \\
		$\overline{P}(Q)$ & $(\pi, Q)$-self-adjoint projection of $P$: $\frac{1}{2}(P + QP^*Q)$ \\
		$(R_l)_{l \in \mathbb{N}}$ & Sequence of alternating projections \\
		$R_\infty$ & Limit of alternating projections \\
		$P_\beta = P_\beta(N,H)$ & {\footnotesize Metropolis-Hastings with proposal $N$ and target Hamiltonian $H$ at inverse temperature $\beta$} \\
		$h(P_\beta)$ & Critical height associated with $P_\beta$ \\
		\bottomrule
	\end{tabular}
	\centering
	\caption{Summary of notations}\label{Table:summary}
\end{table}

\bibliographystyle{abbrvnat}
\bibliography{ref}

\end{document}